\documentclass[a4paper,12pt]{amsart}

\usepackage[T1]{fontenc}
\usepackage[utf8]{inputenc}
\usepackage{lmodern}
\usepackage[english]{babel}
\usepackage[autostyle]{csquotes}

\usepackage[pdfusetitle,linktocpage,colorlinks=false]{hyperref}
\usepackage{doi}
\usepackage{filecontents}
\usepackage{amscd}
\usepackage{enumitem}

\usepackage{amsmath,amssymb,amsthm,amscd}
\usepackage{tikz-cd}

\newtheorem{thm}{Theorem}[section]
\newtheorem{prop}[thm]{Proposition}
\newtheorem{lem}[thm]{Lemma}
\newtheorem{cor}[thm]{Corollary}

\newtheorem{defn}[thm]{Definition}
\newtheorem{defprop}[thm]{Definition/Proposition}
\theoremstyle{remark}
\newtheorem{rem}[thm]{Remark}

\begin{document}
\title[$p$-approximation properties of exact groups]{On some $p$-approximation properties of exact discrete groups and $\ell^p$ uniform Roe algebras}
\author{Yeong Chyuan Chung}
\address{School of Mathematics, Jilin University, Changchun 130012, Jilin, P. R. China}
\email{chungyc@jlu.edu.cn}
\date{\today}
\thanks{We thank Ignacio Vergara, Zhen Wang, and Jianguo Zhang for their insightful comments in private communication during the course of this work.}
\keywords{$p$-operator spaces, $\ell^p$ uniform Roe algebras, property A, $p$-nuclearity, $p$-invariant translation approximation property}

\maketitle

\begin{abstract}
We study $p$-approximation properties of $\ell^p$ uniform Roe algebras and their connections to coarse geometry and group theory. 
For a discrete metric space $X$ with bounded geometry, we prove that property A implies $p$-nuclearity of the $\ell^p$ uniform Roe algebra $B^p_u(X)$ for every $p\in(1,\infty)$, while $B^1_u(X)$ is always 1-nuclear.
We introduce the $p$-invariant translation approximation property ($p$-ITAP) for discrete groups, generalizing the 2-ITAP of Roe. We also introduce the $p$-operator ITAP.
For exact groups, we show that the $p$-operator ITAP is equivalent to the $p$-approximation property of An-Lee-Ruan.
We also characterize exactness of discrete groups in terms of their $\ell^p$ uniform Roe algebras with coefficients in $p$-operator spaces.

\end{abstract}

\tableofcontents

\section{Introduction}

The interplay between coarse geometry and operator algebras has produced a wealth of deep results over the past decades. A natural question is: to what extent can coarse geometric properties of a metric space be reflected in approximation properties of associated operator algebras? In the $C^*$-algebraic setting, a discrete metric space with bounded geometry has Yu’s property A if and only if its uniform Roe algebra is nuclear (cf. \cite[Theorem 5.3]{STY},\cite[Theorem 5.5.7]{BO}), and for discrete groups this property is in turn equivalent to exactness of the reduced group $C^*$-algebra \cite{Ozawa,GK}.
Other characterizations of property A in terms of the uniform Roe algebra were obtained in \cite{Sako20}.
These equivalences have been instrumental in connecting coarse geometry with noncommutative geometry and analytic/geometric group theory.

The present paper develops analogous results in the setting of $L^p$ operator algebras and $p$-operator spaces, where $p\in(1,\infty)$. The classical theory of operator spaces (the $p=2$ case) has a generalization to the $L^p$ setting. This theory of $p$-operator spaces, developed through the work of Pisier \cite{Pisier}, Le Merdy \cite{LM96,LM96i}, Daws \cite{Daws10}, An-Lee-Ruan \cite{ALR}, and subsequently studied by others \cite{ALL,Lee1,Lee2,ZD}, provides the framework for studying spaces of bounded linear operators on $L^p$ spaces, and more generally on subquotients of $L^p$ spaces. It has also enabled the study of $p$-analogues of various approximation properties in the $p=2$ case, such as the Haagerup-Kraus approximation property, the weak amenability of Cowling-Haagerup, as well as nuclearity \cite{CH,HK,Smith,ALR,Ver,Wang,WZ}.

We will be concerned with two classes of $L^p$ operator algebras --- the reduced $L^p$ operator algebra of a discrete group (also known as the algebra of $p$-pseudofunctions) and the $\ell^p$ uniform Roe algebra of a discrete metric space with bounded geometry. These algebras have been the subject of much research in recent times \cite{ALR,Chung4,DC,LpRoeRigidity,LpunifRoe,EME,EP,GT1,GT2,GT3,LWZ19,LY,LZ,NR,Runde,SZ20,Wang}.
In fact, the study of the algebra of $p$-pseudofunctions and other $L^p$ operator algebras associated with groups can be traced back to the work of Herz in the 1970s \cite{Herz2}.

The first main contribution of this paper is to establish that property A of a metric space is sufficient for $p$-nuclearity of its $\ell^p$ uniform Roe algebra for all $p\in(1,\infty)$ (Theorem \ref{Apnuc}), extending the well-known $C^*$-algebraic ($p=2$) result.
Moreover, for the extremal case $p=1$, we show that the $\ell^1$ uniform Roe algebra is always 1-nuclear regardless of property A (Theorem \ref{thm:1nuc}), a phenomenon similarly observed in the group $L^p$ operator algebras \cite[Theorem 6.4]{ALR}.
The converse implication, i.e., whether $p$-nuclearity of the $\ell^p$ uniform Roe algebra implies property A when $p\in(1,\infty)\setminus\{2\}$, remains an open problem.

The second main contribution of the paper is the introduction and study of the $p$-invariant translation approximation property ($p$-ITAP) for discrete groups, where $p\in(1,\infty)$.
For a discrete group $\Gamma$ equipped with a proper right-invariant metric, one naturally has the reduced $L^p$ operator algebra $F^p_\lambda(\Gamma)$ of the group being contained in the $\ell^p$ uniform Roe algebra $B^p_u|\Gamma|$.
In fact, it is contained in the set $B^p_u|\Gamma|^\Gamma$ of fixed points of the action by conjugation with the right regular representation.
The group is said to have the $p$-ITAP if $F^p_\lambda(\Gamma)=B^p_u|\Gamma|^\Gamma$, and this property enables a coarse geometric characterization of the group's reduced $L^p$ operator algebra.
For $p=2$, this property was introduced by Roe \cite{RoeL} and further studied by Zacharias et al. \cite{Zach,KU,UZ}.
We prove that amenable groups and groups with property RD$_p$ with respect to a length function of negative type have the $p$-ITAP (Corollary \ref{amenable} and Proposition \ref{prop:RDp}), thereby providing a large supply of groups with this property, including free groups, finitely generated Coxeter groups, and certain hyperbolic groups.
We also show that the $p$-ITAP passes to subgroups (Proposition \ref{prop:subgp}).

Moving on to exact discrete groups, we provide a characterization of exactness in terms of the $\ell^p$ uniform Roe algebra with coefficients in $p$-operator spaces (Theorem \ref{thm:functor}).
Using the slice map property and Fubini product techniques adapted to the $p$-operator space setting, we also prove that for exact discrete groups, the $p$-operator ITAP and the $p$-approximation property of An-Lee-Ruan are equivalent (Theorem \ref{thm:ZachP}). 
This constitutes a $p$-analogue of Zacharias' theorem \cite[Theorem 3.2]{Zach}.
As a consequence, all discrete groups with the Haagerup-Kraus approximation property have the $p$-ITAP for all $p\in(1,\infty)$.
Remark \ref{open2} contains a related open question when $p\in(1,\infty)\setminus\{2\}$.

Finally, we show that the product of a group with the $p$-AP and a group with the $p$-ITAP again has the $p$-ITAP (Theorem \ref{thm:pdt}), generalizing the results on direct products of groups obtained by Uuye and Zacharias in the $p=2$ case \cite{UZ}.

This paper is organized as follows: 
Section 2 reviews the necessary background on $p$-operator spaces, $\ell^p$ uniform Roe algebras, the $p$-approximation property for discrete groups, and various $p$-operator space approximation properties.
Section 3 establishes the $p$-nuclearity of the $\ell^p$ uniform Roe algebra for metric spaces with property A, and singles out the special case $p=1$ where property A is not needed for 1-nuclearity.
Section 4 introduces the $p$-invariant translation approximation property for discrete groups, and provides a number of sufficient conditions for its validity, including property RD$_p$. We also show that the $p$-ITAP is inherited by subgroups.
Section 5 develops the Fubini product and slice map property in the $p$-operator space framework.
Section 6 gives the characterization of exactness of discrete groups via $\ell^p$ uniform Roe algebras with coefficients in $p$-operator spaces, and establishes the equivalence between the $p$-operator invariant translation approximation property and the $p$-approximation property for exact groups.
Section 7 studies the behavior of the $p$-invariant translation approximation property under direct products.

%
%

\section{Preliminaries}

\subsection{$p$-operator spaces}

\begin{defn} \cite{ALR,Daws10}
For $p\in(1,\infty)$, an abstract $p$-operator space is a Banach space $X$ together with a family of norms $||\cdot||_n$ on $M_n(X)$ satisfying:
\begin{enumerate}
\item[$\mathcal{D}_\infty$:] For $u\in M_n(X)$ and $v\in M_m(X)$, we have \[\biggl\Vert\begin{pmatrix} u & 0 \\ 0 & v \end{pmatrix}\biggr\Vert_{n+m}=\max(||u||_n,||v||_m);\]
\item[$\mathcal{M}_p$:] For $u\in M_m(X)$, $\alpha\in M_{n,m}(\mathbb{C})$, and $\beta\in M_{m,n}(\mathbb{C})$, we have \[||\alpha u\beta||_n\leq ||\alpha||_{B(\ell_p^m,\ell_p^n)} ||u||_m ||\beta||_{B(\ell_p^n,\ell_p^m)}.\]
\end{enumerate}
\end{defn}

\begin{defn}
Let $V$ and $W$ be $p$-operator spaces. A linear map $\phi:V\to W$ is said to be $p$-completely bounded if
\[ \|\phi\|_{pcb} = \sup_n \|\phi_n\|<\infty, \]
where $\phi_n:M_n(V)\to M_n(W)$ is the map sending $[x_{ij}]\in M_n(V)$ to $[\phi(x_{ij})]\in M_n(W)$.

The map $\phi$ is said to be a $p$-complete contraction (resp. $p$-complete isometry) if $\|\phi\|_{pcb}\leq 1$ (resp. $\phi_n$ is an isometry for each $n$).

The space of all $p$-completely bounded maps from $V$ to $W$ is denoted by $CB_p(V,W)$.
\end{defn}

In the general theory of $p$-operator spaces, one typically considers only $p\in(1,\infty)$ but the definition still makes sense when $p=1$. Le Merdy showed in \cite[Theorem 4.1]{LM96} that for $p\in(1,\infty)$, an abstract $p$-operator space can be $p$-completely isometrically embedded in $B(E)$ for some $E\in SQ_p$, where $SQ_p$ denotes the collection of subspaces of quotients (equivalently, quotients of subspaces) of $L^p$ spaces, and $B(E)$ denotes the collection of all bounded linear operators on $E$. 
The class $SQ_p$ is exactly the class of $p$-spaces studied by Herz \cite{Herz}. This follows from \cite[Theorem 2']{Kwa72} (see \cite[Sections 1.8-1.9]{multinorm} for a detailed proof).
The class $SQ_2$ is the class of Hilbert spaces while the class $SQ_1$ contains all Banach spaces since every Banach space is a quotient of some $L^1$ space.

From Le Merdy's characterization, one can see that $CB_p(V,W)$ is a $p$-operator space with matrix norms given by identifying $M_n(CB_p(V,W))$ with $CB_p(V,M_n(W))$.

In this paper, for technical reasons, we will sometimes restrict to $p$-operator subspaces of some $B(L^p(\mu))$, and following \cite{ALR}, we refer to these as $p$-operator spaces on $L^p$ space. 

Given a Banach algebra $A$ with $p$-operator space structure $\{||\cdot||_n\}_{n\in\mathbb{N}}$, we may also require $(M_n(A),||\cdot||_n)$ to be a Banach algebra for each $n$, and Le Merdy gave the following characterization:

\begin{thm}\cite[Theorem 3.3]{LM96i}
If $A$ is a unital Banach algebra with a $p$-operator space structure $\{||\cdot||_n\}_{n\in\mathbb{N}}$ and $p\in(1,\infty)$, then the following are equivalent:
\begin{enumerate}
\item Each $(M_n(A),||\cdot||_n)$ is a Banach algebra.
\item $A$ is $p$-completely isometrically isomorphic to a subalgebra of $B(E)$ for some $E\in SQ_p$.
\end{enumerate}
\end{thm}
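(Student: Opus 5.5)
The direction (2)$\Rightarrow$(1) is immediate, and I would dispose of it first. If $\pi:A\to B(E)$ is a $p$-complete isometric homomorphism onto a subalgebra, then $\pi_n$ identifies $M_n(A)$ isometrically with a subalgebra of $M_n(B(E))$. Since $M_n(B(E))\cong B(\ell^p_n(E))$ and $\ell^p_n(E)\in SQ_p$, the norm on $M_n(B(E))$ is an operator norm and hence submultiplicative. Submultiplicativity then passes to $M_n(A)$, and completeness is inherited from $A$ because $M_n(A)$ is a finite direct sum of copies of $A$ with an equivalent norm.

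For (1)$\Rightarrow$(2) I would mimic the Blecher--Ruan--Sinclair argument, replacing Hilbert spaces by $SQ_p$-spaces and Ruan's theorem by Le Merdy's embedding theorem \cite[Theorem 4.1]{LM96}.

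\textbf{Step 1: left regular representation.} Consider $L:A\to CB_p(A,A)$ given by $L(a)(b)=ab$. Because each $M_n(A)$ is a Banach algebra, $\|[a_{ij}][b_{kl}]\|_n\le \|[a_{ij}]\|_n\|[b_{kl}]\|_n$. Using the shuffle identification $M_n(CB_p(A,A))\cong CB_p(A,M_n(A))$ together with $\mathcal{D}_\infty$ and $\mathcal{M}_p$, I would show that $L$ is a $p$-complete contraction. Evaluating at the unit and assuming $\|1\|=1$ (after renormalising if needed), $\|L_n(a)\|\ge \|a\|_n$. So $L$ is a $p$-complete isometric unital homomorphism into the algebra $CB_p(A,A)$.

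\textbf{Step 2: embedding $CB_p(A,A)$.} By \cite[Theorem 4.1]{LM96}, fix a $p$-complete isometry $j:A\to B(F)$ with $F\in SQ_p$. The aim is to realise $CB_p(A,A)$, or at least the image of $L$, inside $B(E)$ for an $SQ_p$-space $E$, multiplicatively. The approach I would take first: let $E$ be an $\ell^p$-direct sum, over all $n$ and all $p$-complete contractions $\varphi:A\to M_n(\mathbb{C})=B(\ell^p_n)$, of spaces built from $A$ and $\ell^p_n$. One natural choice is the completion of $A\otimes\ell^p_n$ under the seminorms $\|\sum a_k\otimes\xi_k\|=\|\varphi_n(\ldots)\|$. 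Let $A$ act by left multiplication on these spaces. Such $\ell^p$-sums and subquotients stay in $SQ_p$, since $SQ_p$ is closed under $\ell^p$-sums, subspaces and quotients.

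\textbf{Main obstacle.} The hardest step is proving that this multiplicative representation is $p$-completely isometric. This needs a separation theorem: the matrix norms of $A$ must be recovered from $p$-complete contractions $A\to M_n$. In the $p=2$ case this is Ruan's theorem. Here it should come out of the proof of Le Merdy's theorem, via a Hahn--Banach argument against the $\mathcal{M}_p$ condition. The delicate part is choosing the quotient/subspace so that left multiplication is well defined and still norms everything; submultiplicativity of the matrix norms is exactly what makes left multiplication contractive there.
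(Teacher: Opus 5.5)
The paper gives no proof of this statement; it quotes it from Le Merdy, so your proposal has to stand on its own. Your (2)$\Rightarrow$(1) argument is correct. Step 1 is also correct, provided $\|1\|=1$ is part of the hypothesis. You cannot ``renormalise'', because the conclusion concerns the given matrix norms.

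However, Step 1 is only a reformulation. $CB_p(A,A)$, with the matrix norms of $CB_p(A,M_n(A))$, is not an $SQ_p$-operator algebra in general. Already for $p=2$, $CB(\ell^\infty_2)=B(\ell^\infty_2)$ contains the distinct norm-one idempotents $Q=e_{11}$ and $P=e_{11}+e_{21}$ with $PQ=P$ and $QP=Q$. This cannot happen in an operator algebra, where norm-one idempotents are orthogonal projections. So realising $L(A)$ on an $SQ_p$ space is the entire theorem, and that is exactly the step you leave open.

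The construction you sketch for that step does not work, and the way it fails shows what is missing.
\begin{itemize}
\item For a single $p$-complete contraction $\varphi$, the seminorm $\|\sum_k\varphi(a_k)\xi_k\|$ on $A\odot\ell^p_n$ gives a subspace of $\ell^p_n$, hence an $SQ_p$ space. But left multiplication is not even well defined on the quotient: without positivity the null space is not a left ideal. For upper triangular $2\times 2$ matrices and $\varphi(a)=a_{12}$, one has $\varphi(e_{22})=0$ but $\varphi(e_{12}e_{22})=1$.
\item If you enforce invariance by using $\sup_{\|b\|\le 1}\|\sum_k\varphi(ba_k)\xi_k\|$, left multiplication becomes contractive. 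But you then land in an $\ell^\infty$-type space with no reason to be in $SQ_p$.
\end{itemize}

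What is needed is a dilation theorem. The non-multiplicative $p$-complete isometry $j:A\to B(F)$ given by \cite[Theorem 4.1]{LM96} must be written as $j(a)=W\pi(a)V$, with $V,W$ contractions and $\pi$ a $p$-completely contractive homomorphism on an $SQ_p$ space. Then $\|x\|_n=\|j_n(x)\|\le\|\pi_n(x)\|\le\|x\|_n$.

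With this formulation no family of $M_n^p$-valued maps is needed. That matters, because Le Merdy's theorem only provides maps into $B(G)$ with $G\in SQ_p$. Maps into $M_n^p$ that norm $A$ would place $A$ on $L^p$ space, which is not available.

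The Hahn--Banach argument behind \cite[Theorem 4.1]{LM96} uses only $\mathcal{D}_\infty$ and $\mathcal{M}_p$ and never sees the product, so it cannot produce $\pi$ by itself. The submultiplicativity of all the matrix norms has to enter through a factorization theorem for the $p$-multiplicatively contractive maps $(a_1,\dots,a_N)\mapsto j(a_1\cdots a_N)$. In the Hilbertian case, this is the role of the Christensen--Sinclair/Paulsen--Smith theorem in the original Blecher--Ruan--Sinclair proof. Le Merdy's factorization theorem for $p$-completely bounded multilinear maps is its $p$-analogue. Your proposal does not supply this ingredient, so (1)$\Rightarrow$(2) remains unproved.
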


The $p=1$ case is omitted in Le Merdy's theorem but we note that at least (ii) $\Rightarrow$ (i) is valid, i.e., algebras of bounded linear operators on $SQ_1$ spaces have a canonical 1-operator space structure such that the matrix algebras are Banach algebras. This is due to Kwapien \cite[Theorem 2']{Kwa72} (also see \cite[Theorem 3.2]{LM96}).

Given an $SQ_p$ space $E$, we will refer to norm-closed subalgebras of $B(E)$ as $SQ_p$-operator algebras. 

Given a measure $\mu$ and a Banach space $E$, one can define a norm on the algebraic tensor product $L^p(\mu)\odot E$ by embedding it into the space $L^p(\mu,E)$ of Bochner $p$-integrable functions via the map $f\otimes x\mapsto f(\cdot)x$.
The completion is denoted by $L^p(\mu)\otimes_p E$. We refer the reader to \cite[Chapter 7]{DF} for details regarding this tensor product.
If $E,F$ are $SQ_p$ spaces, $S\in B(E,F)$, and $T\in B(L^p(\mu))$, then the operator $T\otimes S$ is bounded as an operator from $L^p(\mu)\otimes_p E$ to $L^p(\mu)\otimes_p F$ with norm $\|T\| \|S\|$.
Thus, given $p$-operator spaces $V\subseteq B(L^p(\mu))$ and $W\subseteq B(E)$, we may consider their spatial tensor product $V\otimes W$, which is the norm closure of the algebraic tensor product $V\odot W$ in $B(L^p(\mu)\otimes_pE)$.

There is also the $p$-operator space injective tensor product introduced in \cite{ALR}, which we now describe.
Given a $p$-operator space $V$, its dual space $V'=CB_p(V,\mathbb{C})$ has a natural $p$-operator space structure given by identifying $M_n(V')$ with $CB_p(V,B(\ell^p_n))$, where $\ell^p_n$ denotes $\mathbb{C}^n$ equipped with the $\ell^p$ norm.
In fact, for any $p$-operator space $V$, its dual $V'$ is always a $p$-operator space on $L^p$ space \cite[Theorem 4.3]{Daws10}.
Given two $p$-operator spaces $V$ and $W$, there is an injective map from the algebraic tensor product $V\odot W$ to $CB_p(V',W)$ given by $v\otimes w\mapsto(f\mapsto f(v)w)$ for $v\in V$ and $w\in W$.
The completion of $V\odot W$ in $CB_p(V',W)$ is denoted by $V\stackrel{\vee_p}{\otimes}W$, and is called the $p$-operator space injective tensor product of $V$ and $W$.

If both $V$ and $W$ are $p$-operator spaces on $L^p$ space, then their spatial tensor product and their $p$-operator space injective tensor product coincide.
This is because if $W\subseteq B(L^p(\nu))$, then the canonical inclusion $B(L^p(\mu))\stackrel{\vee_p}{\otimes}W\hookrightarrow B(L^p(\mu)\otimes_p L^p(\nu))$ is a $p$-complete isometry \cite[Theorem 3.2]{ALR}, and 
also the inclusion $V\stackrel{\vee_p}{\otimes}W\hookrightarrow B(L^p(\mu))\stackrel{\vee_p}{\otimes}W$ is a $p$-complete isometry \cite[Proposition 2.3.6]{Lee}. 

\subsection{$\ell^p$ uniform Roe algebras}

\begin{defn}
Let $X$ be a metric space. Then $X$ is said to have \emph{bounded geometry} if for all $R\geq 0$ there exists $N_R\in\mathbb{N}$ such that for all $x\in X$, the ball of radius $R$ about $x$ has at most $N_R$ elements.
\end{defn}

Note that every metric space with bounded geometry is necessarily countable and discrete.

\begin{defn} 
For an operator $T=[T_{xy}]_{x,y\in X}\in B(\ell^p(X))$, where $T_{xy}=\langle T\delta_y,\delta_x \rangle$, define the propagation of $T$ to be
\[ \mathop{\rm prop}(T)=\sup\{ d(x,y):x,y\in X,T_{xy}\neq 0 \}\in[0,\infty]. \]
Denote by $\mathbb{C}_u^p[X]$ the unital algebra of all bounded operators on $\ell^p(X)$ with finite propagation. The $\ell^p$ uniform Roe algebra, denoted by $B^p_u(X)$, is defined to be the operator norm closure of $\mathbb{C}_u^p[X]$ in $B(\ell^p(X))$.
\end{defn}

Let $X$ be a metric space with bounded geometry.
For any subset $S\subseteq B(E)$, where $E$ is an $SQ_p$ space, let $\mathbb{C}[X,S]$ denote the set of all finite propagation matrices $[a_{x,y}]_{x,y\in X}$, where $a_{x,y}\in S$ and $\sup_{x,y\in X}\Vert a_{x,y} \Vert<\infty$. Then we have a canonical inclusion $\mathbb{C}[X,S]\subseteq B(\ell^p(X,E))$.
Note that $\ell^p(X,E)\cong \ell^p(X)\otimes_p E$ is still an $SQ_p$ space.

\begin{defn}
Given a $p$-operator space $S\subseteq B(E)$, define $B^p_u(X,S)$ as the closure of $\mathbb{C}[X,S]$ in $B(\ell^p(X,E))$.
\end{defn}


If $S\subseteq B(E)$ and $T\subseteq B(F)$ are $p$-operator spaces
and $\theta:S\to T$ is $p$-completely bounded, then entrywise
application of $\theta$ defines a map
\[
\widehat{\theta}:\mathbb{C}[X,S]\longrightarrow \mathbb{C}[X,T],
\qquad
\widehat{\theta}([a_{x,y}])=[\theta(a_{x,y})].
\]
This map extends uniquely to a bounded map
\[
\widehat{\theta}:B_u^p(X,S)\longrightarrow B_u^p(X,T)
\]
satisfying
\[
\|\widehat{\theta}\|\leq \|\theta\|_{\mathrm{pcb}}.
\]
Indeed, if $a\in \mathbb{C}[X,S]$, then
\[
\|a\|
=
\sup_{K\Subset X}\|P_Ka\iota_K\|,
\]
where $K\Subset X$ means that $K$ is finite, $P_K$ is the canonical
projection, and $\iota_K$ is the canonical inclusion. Hence
\[
\|P_K\widehat{\theta}(a)\iota_K\|
=
\|\theta_{|K|}(P_Ka\iota_K)\|
\leq
\|\theta\|_{\mathrm{pcb}}\|P_Ka\iota_K\|.
\]
If $\theta$ is a homomorphism, then so is $\widehat{\theta}$.

When we speak of a short exact sequence
\[
0\longrightarrow J\longrightarrow A
\overset{\pi}{\longrightarrow}B\longrightarrow0
\]
of $SQ_p$-operator algebras, we understand $J$ to have its inherited
$p$-operator structure and $B$ to have the quotient $p$-operator
structure; equivalently, the induced map $A/J\to B$ is a
$p$-complete isometry. In particular, $\pi$ is a $p$-complete
quotient map.

\begin{lem} \label{lem:tensorIncl} (cf. \cite[Lemmas 3.4 and 3.6]{UZ})
For any subset $S\subseteq B(E)$, where $E$ is an $SQ_p$ space, we have the following inclusions in $B(\ell^p(X,E))$:
\[ \{ a\otimes b:a\in\mathbb{C}^p_u[X],b\in S \} \subseteq \mathbb{C}[X,S] \subseteq \mathbb{C}[X,\overline{S}] \subseteq \overline{\mathbb{C}[X,S]}. \]
In particular, for any $p$-operator space $S\subseteq B(E)$, we have 
\[ B^p_u(X)\otimes S\subseteq B^p_u(X,S). \]
\end{lem}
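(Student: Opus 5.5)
The plan is to prove the chain of inclusions one at a time, working in $B(\ell^p(X,E))\cong B(\ell^p(X)\otimes_p E)$, and then read off the tensor product statement by taking closures.

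\emph{First inclusion.} Take $a\in\mathbb{C}^p_u[X]$ and $b\in S$. I will identify $a\otimes b$ with the matrix $[a_{xy}b]_{x,y\in X}$. To do this, check on elementary tensors $\delta_y\otimes\xi$ that $(a\otimes b)(\delta_y\otimes\xi)=\sum_x a_{xy}\delta_x\otimes b\xi$. Since the span of such vectors is dense and both operators are bounded, the identification holds. In general $a_{xy}b$ lies only in $\mathbb{C}S$, not in $S$, so for the statement as written I will regard $S$ as closed under scalar multiples; for a $p$-operator space this is automatic. The entries satisfy $\|a_{xy}b\|\le \|a\|\,\|b\|$, so they are uniformly bounded, and $a_{xy}b=0$ whenever $a_{xy}=0$, so the propagation is at most $\mathrm{prop}(a)<\infty$. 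The second inclusion, $\mathbb{C}[X,S]\subseteq\mathbb{C}[X,\overline{S}]$, is trivial.

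\emph{Third inclusion.} This is the main step. Let $T=[t_{xy}]\in\mathbb{C}[X,\overline{S}]$ with propagation $R$, and let $\varepsilon>0$. For each pair $(x,y)$ with $d(x,y)\le R$, choose $s_{xy}\in S$ with $\|t_{xy}-s_{xy}\|<\varepsilon$; for all other pairs set $s_{xy}=0$. Then $[s_{xy}]$ has propagation at most $R$ and uniformly bounded entries, so it lies in $\mathbb{C}[X,S]$.

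The difference $D=[t_{xy}-s_{xy}]$ is a matrix of propagation at most $R$ whose entries all have norm less than $\varepsilon$. The key estimate is a Schur-type bound:
\[
\|D\|\le N_R\,\varepsilon.
\]
To prove it, partition the set $\{(x,y):d(x,y)\le R\}$ into $N_R$ partial translations, that is, partial bijections. This uses bounded geometry: the graph on $X$ joining points at distance at most $R$ has degree at most $N_R-1$, so by a greedy colouring argument (as in the $p=2$ case) its edge set splits into at most $N_R$ matchings. An operator-valued matrix supported on a partial bijection $\sigma$ has norm $\sup\|D_{\sigma(y)y}\|$ on $\ell^p(X,E)$, because it acts as $(\xi_y)_y\mapsto(D_{\sigma(y)y}\xi_y)$ placed at positions $\sigma(y)$, and the $\ell^p$ norms add. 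Summing over the pieces gives the bound. Since $\varepsilon$ is arbitrary, $T\in\overline{\mathbb{C}[X,S]}$.

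\emph{Consequence.} For a $p$-operator space $S$, the first inclusion together with linearity shows that the algebraic tensor product $\mathbb{C}^p_u[X]\odot S$ lies in $\mathbb{C}[X,S]$. Elements of $B^p_u(X)\odot S$ are norm limits of elements of $\mathbb{C}^p_u[X]\odot S$, since $\|a\otimes b\|=\|a\|\,\|b\|$. Taking closures in $B(\ell^p(X,E))$ therefore gives $B^p_u(X)\otimes S\subseteq\overline{\mathbb{C}[X,S]}=B^p_u(X,S)$.

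The only delicate point is the Schur-type estimate in the third inclusion, specifically the decomposition of a finite-propagation matrix into boundedly many partial translations. I would follow the standard argument for $p=2$, which uses only bounded geometry and carries over to $p$ without change.
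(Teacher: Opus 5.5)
Your proposal is correct and follows the paper's proof: you approximate the entries of a finite-propagation $\overline{S}$-valued matrix inside the $R$-band by elements of $S$ and control the difference with a bounded-geometry norm estimate (the paper simply asserts $\|a_n-a\|<N_R/n$). Your caveat that the first inclusion needs $S$ to be closed under scalar multiplication is also well taken.

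The one slip is your count of partial translations. Greedy edge colouring of a graph of degree at most $N_R-1$ gives about $2N_R$ matchings, not $N_R$, and the diagonal needs a piece of its own. This does no harm, because any constant depending only on $R$ suffices. A direct H\"older estimate, $\|D\xi\|^p\le\varepsilon^p N_R^{p-1}\sum_x\sum_{y\in B(x,R)}\|\xi_y\|^p\le (N_R\varepsilon)^p\|\xi\|^p$, recovers the paper's constant $N_R$ without any colouring.
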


\begin{proof}
The first two inclusions are clear, so we shall only prove the last inclusion.
Let $a\in\mathbb{C}[X,\overline{S}]$ such that $a$ has propagation $R$ and $\| a(x_1,x_2) \|\leq M$ for all $x_1,x_2\in X$.
For each $n\in\mathbb{N}$, define $a_n\in\mathbb{C}[X,S]$ such that $a_n(x_1,x_2)=0$ if $d(x_1,x_2)>R$, and $\| a_n(x_1,x_2) - a(x_1,x_2) \|<1/n$ if $d(x_1,x_2)\leq R$.
Then $\| a_n-a \|<N_R/n$ for each $n$, and thus $a\in\overline{\mathbb{C}[X,S]}$.
\end{proof}

Given two metric spaces $(X,d_X)$ and $(Y,d_Y)$, we equip the product $X\times Y$ with the metric
\[ d_{X\times Y}((x_1,y_1),(x_2,y_2)) = \max\{ d_X(x_1,x_2),d_Y(y_1,y_2) \}. \]
If $X$ and $Y$ are of bounded geometry, then so is $X\times Y$.

For $R>0$ and $M>0$, denote by $\mathbb{C}_{R,M}[X]$ the subset of $\mathbb{C}^p_u[X]$ consisting of all operators $a=[a_{x,y}]$ with propagation at most $R$ and satisfying $\sup_{x,y\in X}| a_{x,y} |\leq M$.
Similarly define $\mathbb{C}_{R,M}[X,S]$ when $S\subseteq B(E)$ for some $SQ_p$ space $E$.

\begin{lem} (cf. \cite[Lemma 3.7]{UZ})
Let $X$ and $Y$ be metric spaces with bounded geometry, and let $S\subseteq B(E)$ be a subset, where $E$ is an $SQ_p$ space.
Then for $R,R'>0$ and $M,M'>0$, we have a natural inclusion
\[ \{ a\otimes b : a\in\mathbb{C}_{R,M}[X],b\in\mathbb{C}_{R',M'}[Y,S] \} \subseteq \mathbb{C}_{R'',M''}[X\times Y,S], \]
where $R''=\max\{R,R'\}$ and $M''=M M'$. In particular, we have
\[ \{ a\otimes b : a\in\mathbb{C}^p_u[X],b\in\mathbb{C}[Y,S] \} \subseteq \mathbb{C}[X\times Y,S] \]
in $B(\ell^p(X\times Y,E))$.
\end{lem}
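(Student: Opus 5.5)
The plan is to make the identification $\ell^p(X\times Y,E)\cong \ell^p(X)\otimes_p \ell^p(Y,E)$ explicit and then compute matrix entries of $a\otimes b$ directly. First I will use the isometric isomorphism $\ell^p(X)\otimes_p \ell^p(Y,E)\cong \ell^p(X,\ell^p(Y,E))\cong \ell^p(X\times Y,E)$, sending $\delta_x\otimes(\delta_y\otimes\xi)\mapsto \delta_{(x,y)}\otimes\xi$. This is the standard Fubini identification for Bochner $L^p$ spaces over counting measures. Under it, for $a\in\mathbb{C}_{R,M}[X]$ and $b\in\mathbb{C}_{R',M'}[Y,S]$, the operator $a\otimes b$ (which is bounded with norm $\|a\|\|b\|$, as recalled in the preliminaries) acts on finitely supported vectors in $\ell^p(X\times Y,E)$. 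A direct computation on elementary tensors will show that its matrix entries are
\[ (a\otimes b)_{(x_1,y_1),(x_2,y_2)} = a_{x_1,x_2}\, b_{y_1,y_2}\in B(E). \]
Since finitely supported vectors are dense and both sides are bounded, this matrix description determines $a\otimes b$.

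The remaining checks are then immediate. Each entry $a_{x_1,x_2}b_{y_1,y_2}$ is a scalar multiple of an element of $S$. Strictly speaking, $S$ is only a subset, so to get entries in $S$ I will interpret the statement as in \cite[Lemma 3.7]{UZ}: either $S$ is closed under scalar multiplication (for instance, a $p$-operator space), or the entries lie in $\mathbb{C}S$. I will note this and otherwise follow the paper's convention. The norm bound is $\|a_{x_1,x_2}b_{y_1,y_2}\|\le M M'$. If the entry is nonzero, then $a_{x_1,x_2}\ne 0$ and $b_{y_1,y_2}\ne 0$, so $d_X(x_1,x_2)\le R$ and $d_Y(y_1,y_2)\le R'$. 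Hence the max-metric distance is at most $\max\{R,R'\}$. So $a\otimes b\in\mathbb{C}_{R'',M''}[X\times Y,S]$, and the final inclusion follows by taking unions over $R,R',M,M'$.

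The only genuinely delicate step is the first one. It requires justifying that the spatial tensor $a\otimes b$ on $\ell^p(X)\otimes_p\ell^p(Y,E)$ agrees with the operator defined by the entry matrix $[a_{x_1,x_2}b_{y_1,y_2}]$ on $\ell^p(X\times Y,E)$. I would handle it by checking agreement on the vectors $\delta_{x}\otimes\delta_{y}\otimes\xi$, whose span is dense, and then using continuity of both operators. The boundedness of the matrix operator comes for free from its equality with $a\otimes b$ on the dense set.
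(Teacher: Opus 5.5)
Your proposal is correct and follows essentially the paper's argument: identify the entries of $a\otimes b$ as $a(x_1,x_2)\,b(y_1,y_2)$, then read off the propagation bound $\max\{R,R'\}$ from the max metric and the entry bound $MM'$. Your extra steps fill in points the paper's proof takes for granted. These are the explicit identification $\ell^p(X)\otimes_p\ell^p(Y,E)\cong\ell^p(X\times Y,E)$ with its check on elementary tensors, and the remark that the entries lie in $S$ only when $S$ is closed under scalar multiplication, which holds for the $p$-operator spaces to which the lemma is later applied.
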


\begin{proof}
Let $a\in\mathbb{C}_{R,M}[X]$ and $b\in\mathbb{C}_{R',M'}[Y,S]$.
For $x_1,x_2\in X$ and $y_1,y_2\in Y$, define
\[ (a\otimes b)((x_1,y_1),(x_2,y_2)) = a(x_1,x_2) b(y_1,y_2). \]

If $d((x_1,y_1),(x_2,y_2))>R''$, then either $d(x_1,x_2)>R''\geq R$ or $d(y_1,y_2)>R''\geq R'$, so either $a(x_1,x_2)=0$ or $b(y_1,y_2)=0$. Hence, in this case, we have $(a\otimes b)((x_1,y_1),(x_2,y_2))=0$.

Also, for all $x_1,x_2\in X$ and $y_1,y_2\in Y$, we have
\[ \| (a\otimes b)((x_1,y_1),(x_2,y_2)) \| = | a(x_1,x_2) | \| b(y_1,y_2) \| \leq M M'=M''. \] 
Hence $a\otimes b\in\mathbb{C}_{R'',M''}[X\times Y,S]$.
\end{proof}

\begin{lem} (cf. \cite[Lemma 3.8]{UZ})
Let $X$ and $Y$ be metric spaces with bounded geometry, and let $S\subseteq B(E)$ be a subset, where $E$ is an $SQ_p$ space. Then for $R>0$ and $M>0$, we have a natural inclusion
\[ \mathbb{C}_{R,M}[X\times Y,S] \subseteq \mathbb{C}_{R,M'}[X,\mathbb{C}_{R,M}[Y,S]], \]
where $M'=M N_R^Y$.
In particular, we have \[ \mathbb{C}[X\times Y,S] \subseteq \mathbb{C}[X,\mathbb{C}[Y,S]] \]
in $B(\ell^p(X\times Y,E))$.
\end{lem}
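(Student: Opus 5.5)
The plan is to use the canonical identification $\ell^p(X\times Y,E)\cong\ell^p(X,\ell^p(Y,E))$, under which an operator on $X\times Y$ becomes an $X\times X$ matrix whose entries are operators on $\ell^p(Y,E)$. Given $a\in\mathbb{C}_{R,M}[X\times Y,S]$, define for $x_1,x_2\in X$ the $Y\times Y$ matrix
\[ a_{x_1,x_2}(y_1,y_2) = a((x_1,y_1),(x_2,y_2)) \in S. \]
One must check that $a$, viewed as the $X$-indexed matrix $[a_{x_1,x_2}]$, acts on $\ell^p(X,\ell^p(Y,E))$ exactly as $a$ acts on $\ell^p(X\times Y,E)$. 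This is a direct computation on finitely supported vectors, using that all the matrices involved have finite propagation and bounded geometry, so every sum is finite.

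First, each $a_{x_1,x_2}$ lies in $\mathbb{C}_{R,M}[Y,S]$. If $d_Y(y_1,y_2)>R$, then $d_{X\times Y}((x_1,y_1),(x_2,y_2))>R$, so the entry vanishes. The entries are bounded by $M$ trivially. Second, the $X$-matrix has propagation at most $R$: if $d_X(x_1,x_2)>R$, then every entry of $a_{x_1,x_2}$ vanishes by the same max-metric argument, so $a_{x_1,x_2}=0$.

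The remaining point, which I expect to be the only real work, is the uniform bound $\|a_{x_1,x_2}\|_{B(\ell^p(Y,E))}\le MN_R^Y$. For this I would use the standard estimate for a matrix $b$ of propagation at most $R$ with entries of norm at most $M$: each row and each column has at most $N_R^Y$ nonzero entries. By the Schur test (equivalently, interpolation between the $\ell^1$ bound given by the maximal column sum and the $\ell^\infty$ bound given by the maximal row sum, applied to the scalar function $y\mapsto\|\xi(y)\|$), one gets
\[ \|b\xi\|_p\le MN_R^Y\|\xi\|_p. \]
Concretely, by Hölder,
\[ \Big\|\sum_{y_2}b(y_1,y_2)\xi(y_2)\Big\|^p\le M^p (N_R^Y)^{p-1}\sum_{d(y_1,y_2)\le R}\|\xi(y_2)\|^p. \]
Summing over $y_1$ and using the column count gives the factor $M^p(N_R^Y)^p$. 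The same estimate also shows that $a$ itself is bounded, which justifies the identification.

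Putting these together gives $a\in\mathbb{C}_{R,MN_R^Y}[X,\mathbb{C}_{R,M}[Y,S]]$. Taking the union over all $R$ and $M$ then yields the statement $\mathbb{C}[X\times Y,S]\subseteq\mathbb{C}[X,\mathbb{C}[Y,S]]$.
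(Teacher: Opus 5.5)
Your proposal is correct and follows the same route as the paper: you slice $a$ into the blocks $a_{x_1,x_2}$, use the max-metric to get propagation at most $R$ for each block and for the $X$-indexed matrix, and bound the block norms by $MN_R^Y$. The only difference is that the paper simply asserts $\|a_{x_1,x_2}\|\leq MN_R^Y$, whereas your H\"older (Schur test) computation actually justifies that bound.
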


\begin{proof}
Let $a\in\mathbb{C}_{R,M}[X\times Y,S]$. Fix $x_1,x_2\in X$ and consider \[ b_{x_1,x_2}(y_1,y_2)=a((x_1,y_1),(x_2,y_2)) \] for $y_1,y_2\in Y$.
If $d(y_1,y_2)>R$, then $d((x_1,y_1),(x_2,y_2))>R$, so $b_{x_1,x_2}(y_1,y_2)=0$.
Also, for all $y_1,y_2\in Y$, we have $\| b_{x_1,x_2}(y_1,y_2) \| \leq M$.
Hence $b_{x_1,x_2}\in\mathbb{C}_{R,M}[Y,S]$.

Now, if $d(x_1,x_2)>R$, then we also have $d((x_1,y_1),(x_2,y_2))>R$ for any $y_1,y_2\in Y$, so $b_{x_1,x_2}=0$.
Also, for any $x_1,x_2\in X$, we have $\| b_{x_1,x_2} \| \leq M N_R^Y=M'$.
Hence $b\in\mathbb{C}_{R,M'}[X,\mathbb{C}_{R,M}[Y,S]]$.
\end{proof}

The next result is an immediate consequence of the previous two lemmas.

\begin{prop} \label{thm:incl} (cf. \cite[Theorem 3.9]{UZ})
Let $X$ and $Y$ be metric spaces of bounded geometry, and let $S\subseteq B(E)$ be a $p$-operator space, where $E$ is an $SQ_p$ space.
Then we have natural inclusions
\[ B^p_u(X) \otimes B^p_u(Y,S) \subseteq B^p_u(X\times Y,S) \subseteq B^p_u(X,B^p_u(Y,S)). \]
\end{prop}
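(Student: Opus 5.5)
Both inclusions are obtained by passing to closures in the two preceding lemmas. Throughout, I regard all objects as operators on $\ell^p(X\times Y,E)\cong\ell^p(X)\otimes_p\ell^p(Y)\otimes_p E\cong \ell^p(X,\ell^p(Y,E))$. These identifications are isometric because of the Bochner-space description of $\otimes_p$, so that the relevant norms are all the operator norm on this one space. The argument therefore reduces to checking that each generating set lands in a set whose closure is the target algebra.

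For the first inclusion, $B^p_u(X)\otimes B^p_u(Y,S)$ is by definition the norm closure of the algebraic tensor product $B^p_u(X)\odot B^p_u(Y,S)$ in $B(\ell^p(X)\otimes_p \ell^p(Y,E))$. Since $\|a\otimes b\|=\|a\|\|b\|$, elementary tensors depend continuously on each factor. It therefore suffices to show $a\otimes b\in B^p_u(X\times Y,S)$ for $a\in\mathbb{C}^p_u[X]$ and $b\in\mathbb{C}[Y,S]$, and then take limits and linear combinations. This is exactly the ``in particular'' part of the first of the two preceding lemmas, which gives $a\otimes b\in\mathbb{C}[X\times Y,S]$. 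I would also check that the matrix entries of $a\otimes b$ as an operator on $\ell^p(X\times Y,E)$ are $a(x_1,x_2)b(y_1,y_2)$, which is clear on elementary vectors $\delta_{x}\otimes\delta_y\otimes e$.

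For the second inclusion, I would apply the second of the two preceding lemmas. It gives $\mathbb{C}[X\times Y,S]\subseteq\mathbb{C}[X,\mathbb{C}[Y,S]]$ as operators on $\ell^p(X,\ell^p(Y,E))$, where $\mathbb{C}[Y,S]$ is viewed inside $B(\ell^p(Y,E))$ and $\ell^p(Y,E)$ is an $SQ_p$ space. Next, $\mathbb{C}[Y,S]\subseteq B^p_u(Y,S)$, so $\mathbb{C}[X,\mathbb{C}[Y,S]]\subseteq\mathbb{C}[X,B^p_u(Y,S)]\subseteq B^p_u(X,B^p_u(Y,S))$. Here I use that the entries are uniformly bounded, which the lemma ensures through the constant $M'=MN^Y_R$. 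Taking norm closures in $B(\ell^p(X\times Y,E))$ then yields $B^p_u(X\times Y,S)\subseteq B^p_u(X,B^p_u(Y,S))$.

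The only point needing care is the identification step. One must verify that the operator on $\ell^p(X\times Y,E)$ given by the matrix $a((x_1,y_1),(x_2,y_2))$ coincides with the operator on $\ell^p(X,\ell^p(Y,E))$ given by the block matrix $[b_{x_1,x_2}]$. This follows by evaluating both on finitely supported vectors, which are dense since $p<\infty$ (for $p=1$ as well), and using boundedness of both operators. Apart from this check, the proposition is a direct consequence of the two lemmas.
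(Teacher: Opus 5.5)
Your proposal is correct and follows the paper's route. The paper states the proposition as an immediate consequence of the two preceding lemmas by passing to norm closures, which is what you do: you use $\|a\otimes b\|\le\|a\|\|b\|$ on the $SQ_p$ space $\ell^p(Y,E)$ for the first inclusion, and the uniform entry bound $M'=MN_R^Y$ for the second. Your check that $\ell^p(X\times Y,E)$, $\ell^p(X)\otimes_p\ell^p(Y,E)$ and $\ell^p(X,\ell^p(Y,E))$ are isometrically identified fills in a detail the paper leaves implicit.
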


\subsection{The $p$-approximation property ($p$-AP) for discrete groups}

Let $\Gamma$ be a discrete group, and let $\lambda_p$ be the left regular representation of $\Gamma$ on $\ell^p(\Gamma)$.
Define $A_p(\Gamma)$ to be the space of $f\in C_0(\Gamma)$ for which there are $(\xi_n)\subseteq\ell^{p'}(\Gamma)$ and $(\eta_n)\subseteq\ell^p(\Gamma)$, where $1/p+1/p'=1$, such that 
\[ f(s)= \sum_n\xi_n * \check{\eta}_n(s) =\sum_n\langle \xi_n,\lambda_p(s)\eta_n \rangle \] with norm 
\[ \| f \|_{A_p(\Gamma)} = \inf\left\{ \sum_n \|\xi_n\| \|\eta_n\|: f=\sum_n\xi_n * \check{\eta}_n \right\}<\infty. \]
Then $A_p(\Gamma)$ is a commutative Banach algebra known as the Fig\`{a}-Talamanca-Herz algebra of $\Gamma$ \cite{F-T,Herz}.
Daws \cite{Daws10} showed that $A_p(\Gamma)$ carries a natural $p$-operator space structure.

A function $u:\Gamma\to\mathbb{C}$ is called a multiplier of $A_p(\Gamma)$ if the multiplication map
\[ m_u:\phi\in A_p(\Gamma) \to u\phi\in A_p(\Gamma) \]
is well-defined, and thus bounded, on $A_p(\Gamma)$.
In this case, $u$ is a bounded (continuous) function on $\Gamma$.
Let $M_{cb}A_p(\Gamma)$ denote the space of $p$-completely bounded multipliers of $A_p(\Gamma)$ with norm $\| u \|_{M_{cb}A_p(\Gamma)} = \| m_u \|_{pcb}$.

It is known that $M_{cb}A_p(\Gamma)$ is a dual space with canonical predual (isometrically isomorphic to) $Q_{pcb}(\Gamma)$, which we describe below.
The $p=2$ case was due to de Canni\`{e}re and Haagerup \cite{CH}, while the general case was attributed by An-Lee-Ruan \cite{ALR} to private communication with Miao (also see \cite{Miao}).
Given $f\in\ell^1(\Gamma)$, define a bounded linear functional $\alpha_{cb}(f)$ on $M_{cb}A_p(\Gamma)$ by
\[ \alpha_{cb}(f)(u)=\sum_{s\in\Gamma}f(s)u(s). \]
Each $u\in M_{cb}A_p(\Gamma)$ is bounded on $\Gamma$ with $\sup_{s\in\Gamma}|u(s)|\leq\| u \|_{M_{cb}A_p(\Gamma)}$, so $\alpha_{cb}:\ell^1(\Gamma)\to M_{cb}A_p(\Gamma)^*$ is an injective contraction.
The norm closure of $\alpha_{cb}(\ell^1(\Gamma))$ in $M_{cb}A_p(\Gamma)^*$ is defined to be $Q_{pcb}(\Gamma)$.

Based on this duality, An-Lee-Ruan introduced the following generalization of the (2-)approximation property of Haagerup and Kraus \cite{HK}.

\begin{defn} \cite{ALR}
A discrete group $\Gamma$ has the $p$-approximation property ($p$-AP) if there is a net $\{u_\alpha\}$ in $A_p(\Gamma)$ such that $u_\alpha\to 1$ in the $\sigma(M_{cb}A_p(\Gamma),Q_{pcb}(\Gamma))$ topology (i.e., the weak* topology on $M_{cb}A_p(\Gamma)$). 
\end{defn}

When dealing with the $p$-AP, one can take the net $\{u_\alpha\}$ to consist of finitely supported functions on $\Gamma$ \cite[Remark 1.2]{Ver}.

Note that a $p$-completely bounded multiplier $u\in M_{cb}A_p(\Gamma)$ defines a $p$-completely bounded map $\hat{m}_u\in CB_p(B^p_u(|\Gamma|,S))$ for any closed subspace $S\subseteq B(\ell^p)$ by $\hat{m}_u([x_{s,t}]) = [u(st^{-1})x_{s,t}]$.
Indeed, $u\in M_{cb}A_p(\Gamma)$ with $\| u \|_{M_{cb}A_p(\Gamma)}\leq C$ if and only if there exist maps $\alpha:\Gamma\to L^p(\mu)$ and $\beta:\Gamma\to L^q(\mu)$, where $1/p+1/q=1$, such that $u(st^{-1})=\langle \beta(s),\alpha(t) \rangle$ and $\sup_{t\in\Gamma}\| \alpha(t) \|\sup_{s\in\Gamma}\| \beta(s) \|\leq C$. 
Moreover, $\| \hat{m}_u \| \leq \| u \|_{M_{cb}A_p(\Gamma)}$ (cf. \cite[Theorem 8.3 and paragraph after Proposition 8.4]{Daws10} and \cite[Theorem 5.11 and Corollary 8.2]{Pisier}). 
Given $\phi\in B^p_u(|\Gamma|,S)^*$ and $x\in B^p_u(|\Gamma|,S)$, we may then define a bounded linear functional $\omega_{\phi,x}\in M_{cb}A_p(\Gamma)^*$ by $\omega_{\phi,x}(u)=\langle \hat{m}_u(x),\phi \rangle$.

\begin{lem} (cf. \cite[Lemma 2.4]{Zach}) \label{lem:Qpcb}
For all $\phi\in B^p_u(|\Gamma|,S)^*$ and $x\in B^p_u(|\Gamma|,S)$, we have $\omega_{\phi,x}\in Q_{pcb}(\Gamma)$.
\end{lem}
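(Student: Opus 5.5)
Following Zacharias' argument, I will first reduce to the case where $x$ has finite propagation, then show that in this case $\omega_{\phi,x}$ already lies in $\alpha_{cb}(\ell^1(\Gamma))$, and finally pass to norm limits.

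\textbf{Reduction.} The map $x\mapsto\omega_{\phi,x}$ is linear from $B^p_u(|\Gamma|,S)$ to $M_{cb}A_p(\Gamma)^*$. It satisfies
\[ |\omega_{\phi,x}(u)|\leq \|\hat m_u\|\,\|x\|\,\|\phi\|\leq \|u\|_{M_{cb}A_p(\Gamma)}\|x\|\|\phi\|, \]
so it is bounded with norm at most $\|\phi\|$. Since $Q_{pcb}(\Gamma)$ is norm closed in $M_{cb}A_p(\Gamma)^*$, it suffices to prove $\omega_{\phi,x}\in Q_{pcb}(\Gamma)$ for $x$ in the dense subset $\mathbb{C}[|\Gamma|,S]$. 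So fix $x=[x_{s,t}]$ with propagation $R$ and $\sup\|x_{s,t}\|\leq M$.

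\textbf{Finite propagation case.} Let $F=\{g\in\Gamma:|g|\leq R\}$ be the ball for the right-invariant metric; it is finite by properness. For $g\in F$, define $x_g$ as the "$g$-diagonal" part of $x$:
\[ (x_g)_{s,t}=x_{s,t} \text{ if } st^{-1}=g, \qquad (x_g)_{s,t}=0 \text{ otherwise}. \]
Each $x_g$ has at most one nonzero entry in each row and each column, with entries of norm at most $M$. Hence $x_g$ is a bounded operator on $\ell^p(\Gamma,E)$ (a weighted translation), and $x_g\in\mathbb{C}[|\Gamma|,S]$. Moreover $x=\sum_{g\in F}x_g$ is a finite sum. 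By definition of $\hat m_u$, we have $\hat m_u(x_g)=u(g)x_g$, so
\[ \omega_{\phi,x}(u)=\sum_{g\in F}u(g)\langle x_g,\phi\rangle . \]
Setting $f(g)=\langle x_g,\phi\rangle$ for $g\in F$ and $f=0$ off $F$ gives a finitely supported $f\in\ell^1(\Gamma)$ with $\omega_{\phi,x}=\alpha_{cb}(f)\in Q_{pcb}(\Gamma)$.

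\textbf{Expected obstacle.} The main point to check is that the diagonal decomposition is legitimate in the operator setting: each $x_g$ must really be a bounded operator in $B^p_u(|\Gamma|,S)$ to which $\phi$ can be applied. This holds because a matrix supported on the graph of the bijection $t\mapsto gt$ acts by
\[ \|x_g\xi\|^p=\sum_s \bigl\|x_{s,g^{-1}s}\,\xi(g^{-1}s)\bigr\|^p\leq M^p\|\xi\|^p . \]
We also need the identity $\hat m_u(x_g)=u(g)x_g$ at the level of the extended map, but for finite-propagation matrices this is immediate from the entrywise definition of $\hat m_u$. The density and continuity step then completes the proof.
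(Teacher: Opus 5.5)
Your proof is correct and follows the same route as the paper. You reduce to finite-propagation $x$ using the bound $\|\omega_{\phi,x}\|\leq\|\phi\|\,\|x\|$ and the closedness of $Q_{pcb}(\Gamma)$, and then your decomposition $x=\sum_{g\in F}x_g$ makes explicit the paper's terse final step by exhibiting $\omega_{\phi,x}=\alpha_{cb}(f)$ with $f(g)=\phi(x_g)$ finitely supported on $B(e,R)$.
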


\begin{proof}
Since $\| \hat{m}_u \| \leq \| u \|_{M_{cb}A_p(\Gamma)}$, we have $\| \omega_{\phi,x} \| \leq \|\phi\| \|x\|$.
Since $Q_{pcb}(\Gamma)$ is complete, we may assume $x$ has finite propagation, in which case $\omega_{\phi,x}(u)$ depends only on the value of the function $u$ on a finite subset of $\Gamma$, and it follows from the definition of $Q_{pcb}(\Gamma)$ that $\omega_{\phi,x}\in Q_{pcb}(\Gamma)$.
\end{proof}

Since $\{ \hat{m}_u: u\in A_p(\Gamma)\;\text{with finite support} \}$ is a linear subspace of $B(B^p_u(|\Gamma|,S))$, one can use the geometric form of the Hahn-Banach theorem to show that the point-norm closure and the point-weak closure of this subspace coincide (cf. \cite[Page 133]{EKR} for an analogous situation in the $p=2$ case).
Thus, the identity map is in the point-norm closure of this subspace if and only if there is a net $\{ u_\alpha \}$ in $A_p(\Gamma)$, each $u_\alpha$ having finite support, such that $\phi(\hat{m}_{u_\alpha}(x))\to\phi(x)$ for all $\phi\in B^p_u(|\Gamma|,S)^*$.

If $u_\alpha\to 1$ in the $\sigma(M_{cb}A_p(\Gamma),Q_{pcb}(\Gamma))$ topology, such as when $\Gamma$ has the $p$-AP, then for all $\phi\in B^p_u(|\Gamma|,S)^*$ and $x\in B^p_u(|\Gamma|,S)$, we have $\phi(\hat{m}_{u_\alpha}(x))=\omega_{\phi,x}(u_\alpha) \to \omega_{\phi,x}(1)=\phi(x)$ by Lemma \ref{lem:Qpcb}, so it follows that we may choose $\{ u_\alpha \}$ such that $\hat{m}_{u_\alpha}(x)\to x$ in norm for all $x\in B^p_u(|\Gamma|,S)$.

In the case $S=\mathbb{C}$, this shows that if $\Gamma$ has the $p$-AP, then $B^p_u|\Gamma|$ has a multiplier approximate identity with finite propagation, and this implies all ghost operators in $B^p_u|\Gamma|$ are compact 
\cite[Theorem 6.24 and Proposition 6.28]{DC}.

\begin{rem} \label{rem:pAP} \leavevmode
\begin{enumerate}
\item It is known from \cite[Theorem 2.1]{HK} and the remarks thereafter (or the preceding discussion and \cite[Theorem 1.3]{RW}) that the 2-AP implies exactness but we do not know whether the $p$-AP implies exactness when $p\neq 2$.
\item For $1<p\leq q\leq 2$ or $2\leq q\leq p<\infty$, if $\Gamma$ has the $q$-AP, then $\Gamma$ has the $p$-AP \cite[Proposition 6.3]{ALR}. It seems to be an open question whether the $p$-AP implies the 2-AP when $p\neq 2$.
\item For $p,q\in(1,\infty)$ with $1/p+1/q=1$, $\Gamma$ has the $p$-AP if and only if $\Gamma$ has the $q$-AP \cite[Proposition 2.3]{Ver}.
\item Amenable groups have the $p$-AP for all $p\in(1,\infty)$, More generally, $p$-weakly amenable groups have the $p$-AP. All 2-weakly amenable groups are $p$-weakly amenable for all $p\in(1,\infty)$ \cite[Proposition 6.2]{ALR}.
This includes groups such as the free group $F_2$, $SL(2,\mathbb{Z})$, $sp(1,n)$, and $\mathbb{Z}^2\rtimes SL(2,\mathbb{Z})$ \cite{ALR,CowH,CH,HK}.
We refer the reader to \cite{Ver1} for a survey on 2-weak amenability.
\item $SL(3,\mathbb{Z})$ does not have the $p$-AP for every $p\in(1,\infty)$ \cite[Theorem 1.5]{Ver}.
\end{enumerate}
\end{rem}

\subsection{Some approximation properties of $p$-operator spaces}

\begin{defn} \cite{ALR}
A $p$-completely bounded map $\theta:V\to W$ of $p$-operator spaces is said to have the $p$-operator space approximation property ($p$-OAP) if there is a net of bounded finite rank maps $T_\alpha:V\to W$ converging to $\theta$ in the stable point-norm topology, i.e., \[(id_{K(\ell^p)}\otimes T_\alpha)(a)\to (id_{K(\ell^p)}\otimes \theta)(a)\] in norm for all $a\in K(\ell^p)\stackrel{\vee_p}{\otimes} V$.

A $p$-operator space $V$ has the $p$-operator space approximation property if $id_V$ has the $p$-operator space approximation property.

The strong $p$-operator space approximation property is defined by replacing $K(\ell^p)$ by $B(\ell^p)$ above.
\end{defn} 

\begin{rem}
In this paper, we will only consider the $p$-OAP for $p$-operator spaces on $L^p$ space, so the $p$-operator space injective tensor product in the definition above can be replaced by the spatial tensor product.
\end{rem}



\begin{thm} \cite[Theorem 5.5]{ALR} \label{thm:ALRAP}
The following are equivalent for a discrete group $\Gamma$:
\begin{enumerate}
\item $\Gamma$ has the $p$-AP. 
\item $F^p_\lambda(\Gamma)$ has the strong $p$-OAP.
\item $F^p_\lambda(\Gamma)$ has the $p$-OAP.
\end{enumerate}
\end{thm}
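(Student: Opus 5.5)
The plan is to follow the proof of Haagerup--Kraus for $p=2$, adapted as in An--Lee--Ruan, and close the cycle (1) $\Rightarrow$ (2) $\Rightarrow$ (3) $\Rightarrow$ (1). The implication (2) $\Rightarrow$ (3) is immediate. Since $K(\ell^p)\subseteq B(\ell^p)$ and the spatial tensor products are compatible with this inclusion, stable point-norm convergence over $B(\ell^p)$ implies it over $K(\ell^p)$.

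For (1) $\Rightarrow$ (2), take a net $\{u_\alpha\}$ of finitely supported functions with $u_\alpha\to1$ weak* in $M_{cb}A_p(\Gamma)$, as allowed by \cite[Remark 1.2]{Ver}. Let $M_{u_\alpha}$ be the multiplier map $\lambda_p(s)\mapsto u_\alpha(s)\lambda_p(s)$ on $F^p_\lambda(\Gamma)$. It is finite rank, and it is $p$-completely bounded with $\|M_{u}\|_{pcb}\le\|u\|_{M_{cb}A_p}$; this uses the $\alpha,\beta$ factorization of $u(st^{-1})$ recalled above, in the form $M_u(x)=V^*(x\otimes 1)W$. Next, $\mathrm{id}_{B(\ell^p)}\otimes M_{u}$ extends to $B(\ell^p)\otimes F^p_\lambda(\Gamma)$. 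For $a$ in this tensor product and $\phi$ in its dual, the functional $u\mapsto\langle(\mathrm{id}\otimes M_u)(a),\phi\rangle$ lies in $Q_{pcb}(\Gamma)$. The argument is that of Lemma \ref{lem:Qpcb}: it is bounded by $\|a\|\|\phi\|$, and for $a$ in the algebraic tensor product with $\mathbb{C}\Gamma$ it depends on only finitely many values of $u$. Hence $(\mathrm{id}\otimes M_{u_\alpha})(a)\to a$ weakly for every $a$. The Hahn--Banach convexity argument used before Remark \ref{rem:pAP} then turns weak into norm convergence. Convex combinations of the $u_\alpha$ are again finitely supported in $A_p(\Gamma)$, and the convexity is applied in the product space $\prod_{a}$ of finitely many $a$'s, giving the strong $p$-OAP.

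For (3) $\Rightarrow$ (1), let $T_\alpha$ be finite rank maps converging to $\mathrm{id}$ in the stable point-norm topology over $K(\ell^p)$. Turn each into a multiplier by averaging: set $u_\alpha(s)=\langle T_\alpha(\lambda_p(s)),\lambda_p(s)^{\vee}\rangle$, where $\lambda_p(s)\mapsto\langle\cdot\,\delta_e,\delta_s\rangle$ is the canonical coefficient functional (the trace-type functional $\tau(x\lambda_p(s)^{-1})$). Two things must be checked. First, $u_\alpha\in A_p(\Gamma)$; since $T_\alpha$ is finite rank with coefficients from $F^p_\lambda(\Gamma)'$, the function $u_\alpha$ is a finite sum of products of coefficient functions, which lie in $A_p(\Gamma)$ or in $B_p$. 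If membership only holds in $M_{cb}A_p(\Gamma)$, I would multiply by finitely supported cutoffs. Second, $u_\alpha\to1$ weak*. Pair against $f\in\ell^1(\Gamma)$ and then pass to the closure $Q_{pcb}(\Gamma)$. The main obstacle is that the weak* convergence needs uniform control of $\|u_\alpha\|_{M_{cb}A_p}$, which the $p$-OAP does not provide. I would instead use the identification $Q_{pcb}(\Gamma)$-duality of \cite{ALR}: elements of $Q_{pcb}(\Gamma)$ are represented as $\omega(u)=\langle(\mathrm{id}\otimes M_u)(a),\phi\rangle$ with $a\in K(\ell^p)\otimes F^p_\lambda(\Gamma)$ and $\phi$ in its dual. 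This is the $p$-analogue of the Haagerup--Kraus representation via $\Gamma$-equivariant conditional expectation onto the diagonal. With that representation, $\langle u_\alpha,\omega\rangle$ is expressed through $(\mathrm{id}\otimes T_\alpha)(a)$ after applying the diagonal compression, and stable point-norm convergence gives $\langle u_\alpha,\omega\rangle\to\omega(1)$ without any bound on norms. Establishing this representation of $Q_{pcb}(\Gamma)$ is where I expect the real work to lie.
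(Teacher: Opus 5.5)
The paper does not prove this statement; it quotes it from \cite[Theorem 5.5]{ALR}, so your sketch has to stand on its own. Your (2)$\Rightarrow$(3) and (1)$\Rightarrow$(2) are fine: $Q_{pcb}$-membership of $u\mapsto\langle(\mathrm{id}\otimes M_u)(a),\phi\rangle$, followed by the convexity argument, is the standard route. The gap is in (3)$\Rightarrow$(1), at the sentence ``$\langle u_\alpha,\omega\rangle$ is expressed through $(\mathrm{id}\otimes T_\alpha)(a)$ after applying the diagonal compression.''

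Write $\tau_s(y)=\langle y\delta_e,\delta_s\rangle$ and $u_T(s)=\tau_s(T(\lambda_s))$. The diagonal compression is the identity $M_{u_T}=\mathcal{C}\circ(T\otimes\mathrm{id})\circ\Delta$. Here $\Delta(\lambda_s)=\lambda_s\otimes\lambda_s$ is Fell absorption, and $\mathcal{C}(y\otimes\lambda_s)=\tau_s(y)\lambda_s$ is compression to the diagonal of $\ell^p(\Gamma\times\Gamma)$. Inserting this into $\omega(u)=\langle(\mathrm{id}\otimes M_u)(a),\phi\rangle$ expresses $\langle u_\alpha,\omega\rangle$ through $T_\alpha$ acting on the middle leg of $(\mathrm{id}\otimes\Delta)(a)\in K(\ell^p)\otimes F^p_\lambda(\Gamma)\otimes F^p_\lambda(\Gamma)$. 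That is, $T_\alpha$ now carries coefficients in $K(\ell^p)\otimes F^p_\lambda(\Gamma)$, not in $K(\ell^p)$. The $p$-OAP says nothing about convergence there, and since $\|T_\alpha\|_{pcb}$ is unbounded you cannot approximate your way out. Nor is there an $\alpha$-independent map sending $(\mathrm{id}\otimes T_\alpha)(a)$ to $(\mathrm{id}\otimes M_{u_\alpha})(a)$. The multiplier only sees the diagonal coefficients $\tau_s(T_\alpha(\lambda_s))$, and these can be read off from $\sum_s k_s\otimes T_\alpha(\lambda_s)$ only if $a$ is block diagonal in $s$. Such representations only produce $\omega\in\ell^1(\Gamma)$, which is exactly the case where nothing needs proving. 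Since $K(\ell^p)\otimes F^p_\lambda(\Gamma)\subseteq B(\ell^p\otimes_p\ell^p(\Gamma))$, your argument actually proves (2)$\Rightarrow$(1), and the cycle is not closed.

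The missing idea is to dualize the compression instead of applying it to $a$.

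\begin{enumerate}
\item Use the compression only for the estimate $\|u_T\|_{M_{cb}A_p(\Gamma)}\le\|T\|_{pcb}$, via transference to Schur multipliers and Daws' theorem. This also holds for $T\colon F^p_\lambda(\Gamma)\to F^p_\lambda(\Gamma)''$, after composing with the canonical weak*-continuous map $F^p_\lambda(\Gamma)''\to PM_p(\Gamma)$.
\item This estimate makes $f\mapsto\sum_s f(s)\lambda_s\otimes\tau_s$ bounded from $\ell^1(\Gamma)$, normed by $Q_{pcb}$, into the $p$-operator projective tensor product $F^p_\lambda(\Gamma)\widehat{\otimes}_pF^p_\lambda(\Gamma)'$, whose dual is $CB_p(F^p_\lambda(\Gamma),F^p_\lambda(\Gamma)'')$. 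Hence each $\omega\in Q_{pcb}(\Gamma)$ yields $z_\omega$ with $\langle u_T,\omega\rangle=\langle T,z_\omega\rangle$ for all $T$.
\item Every element of that projective tensor product acts as $T\mapsto\psi((\mathrm{id}_{K(\ell^p)}\otimes T)(b))$ for a single $b\in K(\ell^p)\otimes F^p_\lambda(\Gamma)$, by a block-diagonal $c_0$-sum argument.
\item Then $\langle u_\alpha,\omega\rangle=\psi((\mathrm{id}\otimes T_\alpha)(b))\to\psi(b)=\omega(1)$, using only $K(\ell^p)$-coefficients.
\end{enumerate}

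So what you need is a representation of the functional $T\mapsto\langle u_T,\omega\rangle$ on $CB_p$, with a new element $b$. Your representation of $\omega$ through $M_u$ is true (closed range theorem plus $\|\mathrm{id}_{K(\ell^p)}\otimes M_u\|=\|u\|_{M_{cb}A_p}$), but it is not the one that closes the argument.

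A minor point: no cutoffs are needed to get $u_T\in A_p(\Gamma)$. The function $s\mapsto\tau_s(y)=(y\delta_e)(s)$ lies in $\ell^p(\Gamma)\subseteq A_p(\Gamma)$ (take $\xi=\delta_e$ in $\langle\xi,\lambda_p(s)\eta\rangle$), and $\ell^\infty\cdot\ell^p\subseteq\ell^p$. Finite-set cutoffs would in any case endanger the weak* convergence, since they are not uniformly bounded in $M_{cb}A_p(\Gamma)$.
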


\begin{defn} \cite{ALR}
A $p$-operator space $V$ has the $p$-completely bounded approximation property ($p$-CBAP) if there exists $k\in\mathbb{N}$ such that $id_V$ can be approximated in the point-norm topology by a net of $p$-completely bounded finite rank maps $T_\alpha$ with $\| T_\alpha \|_{pcb}\leq k$ for all $\alpha$.
\end{defn}

When one can take $k=1$, we say that $V$ has the
$p$-completely contractive approximation property ($p$-CCAP).
Also, the following can be observed directly from the definitions.

\begin{prop} \label{prop:CBAPOAP}
The $p$-CBAP implies the strong $p$-OAP.
\end{prop}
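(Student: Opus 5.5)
Let $V$ have the $p$-CBAP with constant $k$, witnessed by a net $T_\alpha$ of finite rank maps with $\|T_\alpha\|_{pcb}\le k$ and $T_\alpha(v)\to v$ in norm for every $v\in V$. We show that this same net gives the strong $p$-OAP. So we must prove that $(id_{B(\ell^p)}\otimes T_\alpha)(a)\to a$ in norm for every $a\in B(\ell^p)\stackrel{\vee_p}{\otimes}V$. Here $id\otimes T_\alpha$ is understood as the bounded extension of the algebraic map from $B(\ell^p)\odot V$. The plan is the standard $\varepsilon/3$ argument: uniform boundedness plus convergence on a dense subspace.

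\textbf{Step 1 (uniform bound).} We show $\|id_{B(\ell^p)}\otimes T\|\le\|T\|_{pcb}$ on the $p$-operator space injective tensor product, for any $p$-completely bounded $T:V\to W$. Elements of $B(\ell^p)\stackrel{\vee_p}{\otimes}V$ are viewed as maps in $CB_p(B(\ell^p)',V)$. Under this identification, $id\otimes T$ acts by post-composition $\Phi\mapsto T\circ\Phi$. Hence
\[
\|T\circ\Phi\|_{pcb}\le\|T\|_{pcb}\|\Phi\|_{pcb},
\]
and the same estimate at each matrix level gives the bound on the completion. Consequently $\sup_\alpha\|id\otimes T_\alpha\|\le k$, and $\|id\otimes id_V\|\le 1$.

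\textbf{Step 2 (convergence on elementary tensors).} For a finite sum $a=\sum_{i=1}^m b_i\otimes v_i$ with $b_i\in B(\ell^p)$ and $v_i\in V$, we have
\[
\|(id\otimes T_\alpha)(a)-a\|\le\sum_{i=1}^m\|b_i\|\,\|T_\alpha(v_i)-v_i\|\to0.
\]
This uses only the cross-norm inequality $\|b\otimes v\|\le\|b\|\|v\|$, which holds for $\stackrel{\vee_p}{\otimes}$. Indeed, $f\mapsto f(b)v$ has $p$-cb norm at most $\|b\|\|v\|$, since a $p$-completely bounded functional $f$ satisfies $\|f\|_{pcb}=\|f\|$, and rank-one maps into $V$ have $p$-cb norm at most $\|f\|\,\|v\|$.

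\textbf{Step 3 (density).} For general $a$ and $\varepsilon>0$, choose $a_0\in B(\ell^p)\odot V$ with $\|a-a_0\|<\varepsilon$. Then
\[
\|(id\otimes T_\alpha)(a)-a\|\le k\varepsilon+\|(id\otimes T_\alpha)(a_0)-a_0\|+\varepsilon,
\]
and the middle term tends to $0$ by Step 2. This gives the strong $p$-OAP.

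The only point needing care is Step 1, namely that $id\otimes T$ is well defined and bounded on the completed injective tensor product with norm at most $\|T\|_{pcb}$. I would verify it via the $CB_p(V',W)$ description above. Alternatively, in the spatial picture for $p$-operator spaces on $L^p$ space, one approximates by finite matrix corners $P_n(\cdot)P_n$ of $B(\ell^p)$, where the norm is controlled by $\|(T_\alpha)_n\|$.
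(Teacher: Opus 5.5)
Your proof is correct and matches the paper's approach. The paper gives no written proof and says only that the proposition ``can be observed directly from the definitions''. Your argument is that direct verification: the uniform bound $\|id_{B(\ell^p)}\otimes T_\alpha\|\le\|T_\alpha\|_{pcb}\le k$, obtained by post-composition in $CB_p(B(\ell^p)',V)$, combined with convergence on $B(\ell^p)\odot V$ and an $\varepsilon/3$ density argument.
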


\section{Property A and $p$-nuclearity of $\ell^p$ uniform Roe algebras}

In this section, we show that metric spaces with property A have $p$-nuclear $\ell^p$ uniform Roe algebras for $1<p<\infty$, while the $\ell^1$ uniform Roe algebra is always 1-nuclear.
Besides being of interest on its own right, we will also use this result in the proof of Proposition \ref{prop:ZachQn}.

For $p\in[1,\infty)$ and $n\in\mathbb{N}$, we write $M_n^p$ for $M_n(\mathbb{C})$ regarded as the algebra of bounded linear operators on $n$-dimensional $\ell^p$ space.

\begin{defn} \cite[Proposition 5.1]{ALR} \label{def:pnuc}
For $p\in[1,\infty)$, an $L^p$-operator algebra $A$ is said to be $p$-nuclear if there exist nets of $p$-complete contractions $\phi_\alpha:A\to M_{n(\alpha)}^p$ and $\psi_\alpha:M_{n(\alpha)}^p\to A$ such that $\Vert \psi_\alpha\circ \phi_\alpha(a)-a\Vert\to 0$ for all $a\in A$.
\end{defn}

It follows immediately from the definition that every $p$-nuclear
$L^p$-operator algebra has the $p$-CCAP.

\begin{lem} \label{lem:pnuc}
An $L^p$-operator algebra $A$ is $p$-nuclear if and only if for each finite set $F\subset A$ and $\varepsilon>0$, there exist $n\in\mathbb{N}$ and $p$-completely contractive maps $\phi:A\to M_n^p$ and $\psi:M_n^p\to A$ such that $\Vert \psi\circ\phi(a)-a \Vert<\varepsilon$ for all $a\in F$.
\end{lem}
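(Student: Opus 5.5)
The forward direction is immediate from Definition \ref{def:pnuc}: given nets $\phi_\alpha:A\to M^p_{n(\alpha)}$ and $\psi_\alpha:M^p_{n(\alpha)}\to A$ of $p$-complete contractions with $\psi_\alpha\circ\phi_\alpha(a)\to a$ for every $a\in A$, fix a finite set $F\subset A$ and $\varepsilon>0$. For each $a\in F$ there is $\alpha_a$ such that $\Vert\psi_\alpha\circ\phi_\alpha(a)-a\Vert<\varepsilon$ whenever $\alpha\geq\alpha_a$. Since $F$ is finite and the index set is directed, we can choose $\alpha\geq\alpha_a$ for all $a\in F$ simultaneously. Then $n=n(\alpha)$, $\phi=\phi_\alpha$, $\psi=\psi_\alpha$ satisfy the stated condition.

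For the converse, I would build a net indexed by the directed set
\[ I=\{(F,\varepsilon): F\subset A \text{ finite},\ \varepsilon>0\}, \]
ordered by $(F,\varepsilon)\leq(F',\varepsilon')$ if and only if $F\subseteq F'$ and $\varepsilon'\leq\varepsilon$. This is directed, since $(F\cup F',\min(\varepsilon,\varepsilon'))$ dominates both $(F,\varepsilon)$ and $(F',\varepsilon')$. For each $i=(F,\varepsilon)\in I$ the hypothesis provides $n(i)\in\mathbb{N}$ and $p$-complete contractions $\phi_i:A\to M^p_{n(i)}$ and $\psi_i:M^p_{n(i)}\to A$ with $\Vert\psi_i\circ\phi_i(a)-a\Vert<\varepsilon$ for all $a\in F$.

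To check convergence, fix $a\in A$ and $\delta>0$. For every $i=(F,\varepsilon)\geq(\{a\},\delta)$ we have $a\in F$ and $\varepsilon\leq\delta$, so $\Vert\psi_i\circ\phi_i(a)-a\Vert<\varepsilon\leq\delta$. Hence $\psi_i\circ\phi_i(a)\to a$ for every $a\in A$, which is exactly $p$-nuclearity in the sense of Definition \ref{def:pnuc}.

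There is no real obstacle here. The only point needing care is that the choice of $(n(i),\phi_i,\psi_i)$ for each $i$ uses the axiom of choice, which is standard.
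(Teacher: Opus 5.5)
Your proof is correct and follows essentially the same argument as the paper. For the converse, you use the same directed set of pairs $(F,\varepsilon)$, ordered by inclusion of $F$ and decreasing $\varepsilon$. For the forward direction, you choose a single index dominating the finitely many $\alpha_a$, $a\in F$, exactly as the paper does.
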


\begin{proof}
Suppose first that the stated approximation property holds. Let
\[
\mathcal D
=
\{(F,\varepsilon):
F\subseteq A \text{ is finite and } \varepsilon>0\},
\]
and order $\mathcal D$ by
\[
(F,\varepsilon)\preceq (G,\delta)
\quad\Longleftrightarrow\quad
F\subseteq G
\ \text{and}\ 
\delta\leq\varepsilon.
\]
Then $\mathcal D$ is a directed set.

For each $(F,\varepsilon)\in\mathcal D$, choose
$n(F,\varepsilon)\in\mathbb N$ and $p$-completely contractive maps
\[
\varphi_{F,\varepsilon}:
A\longrightarrow M^p_{n(F,\varepsilon)}
\]
and
\[
\psi_{F,\varepsilon}:
M^p_{n(F,\varepsilon)}\longrightarrow A
\]
such that
\[
\|\psi_{F,\varepsilon}\circ
\varphi_{F,\varepsilon}(a)-a\|<\varepsilon
\]
for all $a\in F$.

Fix $a\in A$ and $\eta>0$. If
\[
(F,\varepsilon)\succeq(\{a\},\eta),
\]
then $a\in F$ and $\varepsilon\leq\eta$, and hence
\[
\|\psi_{F,\varepsilon}\circ
\varphi_{F,\varepsilon}(a)-a\|
<\varepsilon\leq\eta.
\]
Thus
\[
\psi_{F,\varepsilon}\circ
\varphi_{F,\varepsilon}(a)
\longrightarrow a
\]
for every $a\in A$. Hence $A$ is $p$-nuclear.

Conversely, suppose that $A$ is $p$-nuclear, and let
\[
\varphi_\alpha:A\longrightarrow M^p_{n(\alpha)},
\qquad
\psi_\alpha:M^p_{n(\alpha)}\longrightarrow A
\]
be $p$-complete contractions such that
\[
\|\psi_\alpha\circ\varphi_\alpha(a)-a\|\longrightarrow0
\]
for every $a\in A$. Given a finite set $F\subseteq A$ and
$\varepsilon>0$, for each $a\in F$ choose $\alpha_a$ such that
\[
\|\psi_\alpha\circ\varphi_\alpha(a)-a\|<\varepsilon
\]
whenever $\alpha\succeq\alpha_a$. Since $F$ is finite, there is
$\alpha_0$ dominating all the $\alpha_a$. Then
\[
\|\psi_{\alpha_0}\circ\varphi_{\alpha_0}(a)-a\|
<\varepsilon
\]
for all $a\in F$, which proves the stated approximation property.
\end{proof}

The next lemma tells us that we can replace $M_n^p$ by any $p$-nuclear $L^p$-operator algebra in Definition \ref{def:pnuc} and Lemma \ref{lem:pnuc}.

\begin{lem} (cf. \cite[Lemma 2.1]{WZ})
Suppose that $A,B$ are $L^p$-operator algebras with $A$ being $p$-nuclear, and there exist nets of $p$-complete contractions $\phi_\beta:B\to A$ and $\psi_\beta:A\to B$ such that $\Vert\psi_\beta\circ\phi_\beta(b)-b\Vert\to 0$ for all $b\in B$.
Then $B$ is also $p$-nuclear.
\end{lem}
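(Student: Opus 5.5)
We will verify the criterion of Lemma \ref{lem:pnuc} for $B$ by composing the two approximations. Fix a finite set $F\subset B$ and $\varepsilon>0$.

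First, since $\psi_\beta\circ\phi_\beta(b)\to b$ for every $b\in B$ and $F$ is finite, we choose a single index $\beta$ with
\[
\Vert\psi_\beta\circ\phi_\beta(b)-b\Vert<\varepsilon/2
\qquad\text{for all } b\in F.
\]
Next, we apply Lemma \ref{lem:pnuc} to the $p$-nuclear algebra $A$ with the finite set $\phi_\beta(F)\subset A$ and tolerance $\varepsilon/2$. This produces $n\in\mathbb N$ and $p$-complete contractions $\phi:A\to M_n^p$ and $\psi:M_n^p\to A$ such that
\[
\Vert\psi\circ\phi(\phi_\beta(b))-\phi_\beta(b)\Vert<\varepsilon/2
\qquad\text{for all } b\in F.
\]

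Now set $\Phi=\phi\circ\phi_\beta:B\to M_n^p$ and $\Psi=\psi_\beta\circ\psi:M_n^p\to B$. These are $p$-complete contractions, because for each $m$ the $m$-th amplification of a composition is the composition of the amplifications, so $\Vert\Phi\Vert_{pcb}\le\Vert\phi\Vert_{pcb}\Vert\phi_\beta\Vert_{pcb}\le 1$, and likewise for $\Psi$. For $b\in F$ we then estimate
\[
\Vert\Psi\Phi(b)-b\Vert\le\Vert\psi_\beta\Vert\,\Vert\psi\phi(\phi_\beta(b))-\phi_\beta(b)\Vert+\Vert\psi_\beta\phi_\beta(b)-b\Vert<\varepsilon,
\]
using $\Vert\psi_\beta\Vert\le 1$. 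By Lemma \ref{lem:pnuc}, $B$ is $p$-nuclear.

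There is no real obstacle in this argument. The only point needing care is that the index $\beta$ must be fixed before invoking the nuclearity of $A$, so that the finite set $\phi_\beta(F)$ passed to Lemma \ref{lem:pnuc} is well defined.
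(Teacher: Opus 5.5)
Your proof is correct and matches the paper's argument essentially verbatim. Like the paper, you fix a single $\beta$ with $\psi_\beta\circ\phi_\beta$ within $\varepsilon/2$ on $F$, apply Lemma \ref{lem:pnuc} for $A$ to the finite set $\phi_\beta(F)$, and then compose to get $\phi\circ\phi_\beta$ and $\psi_\beta\circ\psi$, with the same triangle-inequality estimate.
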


\begin{proof}
By Lemma~\ref{lem:pnuc}, it suffices to verify the corresponding finite-set
approximation property for $B$.

Let $F\subseteq B$ be finite and let $\varepsilon>0$. By hypothesis,
we may choose $\beta$ such that
\[
\|\psi_\beta\circ\varphi_\beta(b)-b\|
<\frac{\varepsilon}{2}
\]
for every $b\in F$.

Since $A$ is $p$-nuclear, Lemma~\ref{lem:pnuc} applied to the finite set
\[
\varphi_\beta(F)
=
\{\varphi_\beta(b):b\in F\}\subseteq A
\]
gives $n\in\mathbb N$ and $p$-completely contractive maps
\[
\varphi:A\longrightarrow M_n^p,
\qquad
\psi:M_n^p\longrightarrow A
\]
such that
\[
\|\psi\circ\varphi(\varphi_\beta(b))
-\varphi_\beta(b)\|
<\frac{\varepsilon}{2}
\]
for every $b\in F$.

Define
\[
\widetilde{\varphi}
=
\varphi\circ\varphi_\beta:
B\longrightarrow M_n^p
\]
and
\[
\widetilde{\psi}
=
\psi_\beta\circ\psi:
M_n^p\longrightarrow B.
\]
Both maps are $p$-completely contractive. For every $b\in F$,
\[
\begin{aligned}
\|\widetilde{\psi}\circ\widetilde{\varphi}(b)-b\|
&\leq
\|\psi_\beta(
\psi\circ\varphi(\varphi_\beta(b))
-\varphi_\beta(b))\| \\
&\qquad
+\|\psi_\beta\circ\varphi_\beta(b)-b\| \\
&\leq
\|\psi\circ\varphi(\varphi_\beta(b))
-\varphi_\beta(b)\|
+\|\psi_\beta\circ\varphi_\beta(b)-b\| \\
&<\varepsilon.
\end{aligned}
\]
Thus the finite-set approximation property in Lemma~\ref{lem:pnuc} holds for
$B$, and hence $B$ is $p$-nuclear.
\end{proof}

\begin{lem} \label{lem:directsum}
For $n(1),\ldots,n(k)\in\mathbb{N}$, the direct sum $M_{n(1)}^p\oplus\cdots\oplus M_{n(k)}^p$ is $p$-nuclear.

More generally, the finite direct sum of $p$-nuclear $L^p$-operator algebras is $p$-nuclear.
\end{lem}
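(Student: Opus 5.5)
We prove the general statement; the first assertion then follows because each $M^p_{n(i)}$ is trivially $p$-nuclear (take $\phi_\alpha=\psi_\alpha=\mathrm{id}$). Let $A_1,\dots,A_k$ be $p$-nuclear $L^p$-operator algebras with $A_i\subseteq B(L^p(\mu_i))$, and realize $A=A_1\oplus\cdots\oplus A_k$ block-diagonally on $L^p(\mu_1)\oplus_p\cdots\oplus_p L^p(\mu_k)$, which is again an $L^p$ space (of the disjoint union measure). In this realization a matrix $[a_{rs}]\in M_m(A)$ with $a_{rs}=(a^{(1)}_{rs},\dots,a^{(k)}_{rs})$ acts on $\ell^p_m\otimes_p\bigl(\bigoplus_i L^p(\mu_i)\bigr)\cong\bigoplus_i(\ell^p_m\otimes_p L^p(\mu_i))$ as the block-diagonal operator with blocks $[a^{(i)}_{rs}]$. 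Since the $\ell^p$-direct sum of operators has norm equal to the maximum of the norms, we get
\[
\|[a_{rs}]\|_{M_m(A)}=\max_i\|[a^{(i)}_{rs}]\|_{M_m(A_i)}.
\]
This identity is what makes the projections $\pi_i:A\to A_i$ and the inclusions $\iota_i:A_i\to A$ $p$-complete contractions. The same argument applies to $M^p_{n_1}\oplus\cdots\oplus M^p_{n_k}$, realized block-diagonally on $\ell^p_{n_1+\cdots+n_k}$.

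By Lemma \ref{lem:pnuc}, fix a finite set $F\subseteq A$ and $\varepsilon>0$. For each $i$, apply Lemma \ref{lem:pnuc} to $\pi_i(F)\subseteq A_i$ to obtain $p$-complete contractions $\phi_i:A_i\to M^p_{n_i}$ and $\psi_i:M^p_{n_i}\to A_i$ with $\|\psi_i\phi_i(\pi_i a)-\pi_i a\|<\varepsilon$ for all $a\in F$. Set $n=n_1+\cdots+n_k$ and define
\[
\phi(a)=\mathrm{diag}\bigl(\phi_1(\pi_1a),\dots,\phi_k(\pi_ka)\bigr)\in M^p_n ,
\]
and let $\psi:M^p_n\to A$ be $\psi(x)=(\psi_1(E_1xE_1),\dots,\psi_k(E_kxE_k))$, where $E_i$ is the coordinate projection onto the $i$-th block of $\ell^p_n$, viewed as an element of $M_{n_i,n}$ or $M_{n,n_i}$ as appropriate.

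It remains to check that these maps are $p$-complete contractions. For $\phi$, the block-diagonal norm identity gives
\[
\|\phi_m([a_{rs}])\|=\max_i\|(\phi_i)_m([\pi_ia_{rs}])\|\le\max_i\|[\pi_ia_{rs}]\|=\|[a_{rs}]\|.
\]
For $\psi$, each compression $x\mapsto E_ixE_i$ is $p$-completely contractive by axiom $\mathcal M_p$, because $\|E_i\|=1$ as a map between $\ell^p$ spaces. Combined with the max-norm identity on $M_m(A)$, this shows $\|\psi\|_{pcb}\le\max_i\|\psi_i\|_{pcb}\le 1$. On $F$ we then have
\[
\psi\phi(a)=\bigl(\psi_i\phi_i(\pi_ia)\bigr)_i ,
\]
and since the norm on $A$ is the maximum of the component norms, $\|\psi\phi(a)-a\|<\varepsilon$ for all $a\in F$. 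This reduces $p$-nuclearity of $A$ to the case of $M^p_n$. Alternatively, one can invoke the preceding lemma (cf. \cite[Lemma 2.1]{WZ}) with $M^p_{n_1}\oplus\cdots\oplus M^p_{n_k}$ and compose with the block-diagonal inclusion of that algebra into $M^p_n$.

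The only genuine point is the identification of the matrix norms on a direct sum as the maximum of the summand norms, and the main obstacle is making sure this holds for the specific $p$-operator structures involved. For the block-diagonal spatial realization it is immediate. If the summands are instead given abstractly as $L^p$-operator algebras, one should take this concrete realization as the definition of the direct sum's structure, and that is the convention I would state explicitly at the start of the proof.
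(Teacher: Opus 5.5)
Your proof is correct and is essentially the paper's argument. The paper first treats $M^p_{n(1)}\oplus\cdots\oplus M^p_{n(k)}$ using the diagonal embedding into $M^p_N$ and the compressions $a\mapsto(P_1aP_1,\ldots,P_kaP_k)$, then gets the general case by factoring through a direct sum of matrix algebras; your composite maps $\phi=\mathrm{diag}\circ(\phi_1\oplus\cdots\oplus\phi_k)$ and $\psi=(\psi_1\oplus\cdots\oplus\psi_k)\circ(\text{compressions})$ do exactly this in one step, and your check that the matrix norms on the direct sum are the maximum of the summand norms supplies a point the paper leaves implicit.
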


\begin{proof}
Let $A=M_{n(1)}^p\oplus\cdots\oplus M_{n(k)}^p$ and $N=n(1)+\cdots+n(k)$. 
For each $i$, let $P_i:\ell^p_N\to\ell^p_{n(i)}$ be the canonical projection.
Define $\phi:A\to M_N^p$ by the diagonal embedding, and define $\psi:M_N^p\to A$ by $a\mapsto (P_1aP_1,\ldots,P_kaP_k)$.
Then $\phi$ and $\psi$ are $p$-completely contractive, and $\psi\circ\phi=id_A$.

The more general statement is obtained by factoring the finite direct sum of $p$-nuclear $L^p$-operator algebras through a finite direct sum of matrix algebras.
\end{proof}

\begin{lem} (cf. \cite[Lemma 3.8]{WZ}) \label{lem:comm}
If $X$ is a compact Hausdorff space, then $C(X)$ is $p$-nuclear for $p\in[1,\infty)$.
\end{lem}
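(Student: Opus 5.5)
We prove that $C(X)$ is $p$-nuclear by factoring the identity approximately through finite-dimensional commutative algebras $\ell^\infty_n$, using partitions of unity. Here $C(X)$ acts on an $L^p$ space by multiplication, e.g.\ on $\ell^p$ of a dense set or on $L^p$ of a suitable measure, and carries the induced $p$-operator structure. The key point is that for commutative algebras of multiplication operators, the $p$-operator structure is automatically the \emph{minimal} one. More precisely, for $[f_{ij}]\in M_n(C(X))$,
\[
\|[f_{ij}]\|_n=\sup_{x\in X}\|[f_{ij}(x)]\|_{M_n^p}.
\]
This holds because $\ell^p_n\otimes_p L^p(\mu)\cong L^p(\mu,\ell^p_n)$, and the matrix of multiplication operators acts pointwise. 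I would verify this identity first, since it makes all $p$-cb estimates reduce to pointwise ones.

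Given a finite set $F\subset C(X)$ and $\varepsilon>0$, compactness gives a finite open cover $U_1,\dots,U_n$ on which each $f\in F$ oscillates by less than $\varepsilon$. Choose points $x_k\in U_k$ and a partition of unity $h_1,\dots,h_n\ge 0$ with $\sum_k h_k=1$ subordinate to the cover. Define
\[
\phi:C(X)\to\ell^\infty_n,\qquad \phi(f)=(f(x_1),\dots,f(x_n)),
\]
\[
\psi:\ell^\infty_n\to C(X),\qquad \psi(c)=\sum_k c_k h_k.
\]
Then $\|\psi\phi(f)-f\|_\infty\le\sup_x\sum_k h_k(x)|f(x_k)-f(x)|<\varepsilon$. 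Here $\ell^\infty_n$ is realized as the diagonal subalgebra of $M_n^p$. It is $p$-nuclear by Lemma \ref{lem:directsum}, being $M_1^p\oplus\cdots\oplus M_1^p$. By the preceding lemma allowing $M_n^p$ to be replaced by any $p$-nuclear algebra, together with Lemma \ref{lem:pnuc}, it then suffices to show that $\phi$ and $\psi$ are $p$-complete contractions.

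For $\phi$: each coordinate is an evaluation, and the matrix norm in $\ell^\infty_n$ (diagonal in $M_n^p$, so again a max over coordinates) is $\max_k\|[f_{ij}(x_k)]\|\le\|[f_{ij}]\|_n$, by the pointwise formula above.

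For $\psi$, which I expect to be the main obstacle, we must bound $\|\sum_k h_k(x)A_k\|_{M_m^p}\le\max_k\|A_k\|$ for matrices $A_k\in M_m$. This is immediate, because for each fixed $x$ the expression $\sum_k h_k(x)A_k$ is a convex combination of the $A_k$. Hence, using the pointwise formula, $\|\psi_m([c_{ij}])\|_m\le\max_k\|[c_{ij}^{(k)}]\|$, which is exactly the norm of $[c_{ij}]$ in $M_m(\ell^\infty_n)$.

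The only real work is therefore justifying the pointwise matrix-norm formula for whichever $L^p$ representation of $C(X)$ is intended. We take a faithful multiplication representation, e.g.\ on $\ell^p(D)$ with $D\subseteq X$ dense, and compute norms of matrices of multiplication operators on $L^p(\mu,\ell^p_m)$ via the essential supremum of pointwise norms. The case $p=1$ is covered by the same argument.
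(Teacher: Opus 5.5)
Your proposal is correct and takes the same route as the paper. The paper defers to the $p=2$ partition-of-unity argument (evaluation at points $x_k$ into $\mathbb{C}^n$, and back via a subordinate partition of unity); your pointwise matrix-norm formula for multiplication operators on $L^p(\mu,\ell^p_m)$ supplies the $p$-complete contractivity of $\phi$ and $\psi$, which the paper leaves implicit.
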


\begin{proof}
This is proved in the same way as the $p=2$ case (cf. \cite[Proposition 2.4.2]{BO}).
\end{proof}


Now we turn our attention to property A, a coarse geometric property introduced by Yu \cite{Yu}, who was motivated by its role as a sufficient condition for coarse embeddability into Hilbert space, and thus for the strong Novikov conjecture.
Many equivalent definitions for this property have been found, and we shall use the following due to Higson-Roe.

\begin{defprop} \cite{HR}
A discrete metric space $X$ with bounded geometry has property A if and only if for every $p\in[1,\infty)$ and all $R,\varepsilon>0$, there exists a map $\xi:X\to\ell^p(X)$ such that
\begin{enumerate}
\item $\|\xi\|_p=1$ for all $x\in X$,
\item there exists $S>0$ such that $\xi_x$ is supported in $B(x,S)$ for each $x\in X$,
\item if $d(x,y)\leq R$, then $\|\xi_x-\xi_y\|_p<\varepsilon$.
\end{enumerate}
We may also assume that $\xi_x(y)\geq 0$ for all $x,y\in X$.
\end{defprop}

Given such a map $\xi$ and any $x\in X$, we may define a function $\phi_x:X\to[0,1]$ by $\phi_x(y)=\xi_y(x)$.
These functions have the following properties:
\begin{enumerate}
\item there exists $N'\in\mathbb{N}$ such that for each $y\in X$, at most $N'$ of the numbers $\phi_x(y)$ are nonzero,
\item the $\phi_x$'s have uniformly bounded supports (each $\phi_x$ is supported in $B(x,S)$),
\item $\sum_{x\in X}\phi_x(y)^p=1$ for each $y\in X$,
\item if $d(y,z)\leq R$, then $\sum_{x\in X}|\phi_x(y)-\phi_x(z)|^p<\varepsilon^p$.
\end{enumerate}
In \cite[Definition 6.1]{SW17}, such a family of functions $(\phi_x)_{x\in X}$ is called a metric $p$-partition of unity on $X$ with $(R,\varepsilon)$-variation.

\begin{thm} \label{Apnuc}
Let $X$ be a discrete metric space with bounded geometry.
If $X$ has property A, then $B^p_u(X)$ is $p$-nuclear for $1<p<\infty$. 
\end{thm}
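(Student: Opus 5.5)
The plan is to imitate the classical $C^*$-proof (cf. \cite[Theorem 5.5.7]{BO}) that property A yields nuclearity of $C^*_u(X)$. The idea is to factor the identity of $B^p_u(X)$ approximately through $\ell^\infty$-direct sums of matrix algebras, and to use $p$-nuclearity of the commutative piece (Lemma~\ref{lem:comm}) together with Lemma~\ref{lem:directsum} and the transfer lemma (cf. \cite[Lemma 2.1]{WZ}).

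Fix $R,\varepsilon>0$ and a metric $p$-partition of unity $(\phi_x)_{x\in X}$ with $(R,\varepsilon)$-variation, supported in balls $B(x,S)$. Let $q$ be the conjugate exponent. Define $\Phi:B^p_u(X)\to\prod_{x\in X}B(\ell^p(B(x,S)))$ by $\Phi(T)=(P_{B(x,S)}TP_{B(x,S)})_x$. This is a $p$-complete contraction, since each component is a compression. Define $\Psi$ on the bounded product by
\[ \Psi((T_x)_x)=\sum_{x\in X}M_{\phi_x}\,T_x\,M_{\phi_x}^{(q)}, \]
where $M_{\phi_x}$ denotes multiplication by $\phi_x$. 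The second factor must be arranged so that $\Psi$ is a $p$-complete contraction. Concretely, $\Psi=V^*(\bigoplus_x T_x)W$, where $W:\ell^p(X)\to\ell^p(X\times X)$ is given by $\delta_y\mapsto\sum_x\phi_x(y)\delta_{(x,y)}$, which is an isometry because $\sum_x\phi_x(y)^p=1$. The map $V$ must then be an $\ell^q$-type isometry, built from $\phi_x^{p-1}$ so that $\sum_x(\phi_x^{p-1})^q=\sum_x\phi_x^p=1$. I would therefore take $\Psi((T_x))=\sum_x M_{\phi_x^{p-1}}T_xM_{\phi_x}$, which is the adjoint-type factorization $V'\circ D\circ W$ with $D$ block diagonal. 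Because it is such a factorization, it is $p$-completely contractive by the $\mathcal{M}_p$ axiom at all matrix levels.

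Then $\Psi\Phi(T)$ is the Schur multiplier of $T$ with kernel $k(y,z)=\sum_x\phi_x(y)^{p-1}\phi_x(z)$. We have $k(y,y)=1$, and $k(y,z)=0$ when $d(y,z)>2S$. For $d(y,z)\le R$, Hölder gives
\[ |1-k(y,z)|=\Bigl|\sum_x\phi_x(y)^{p-1}(\phi_x(y)-\phi_x(z))\Bigr|\le \|\phi_\cdot(y)\|_p^{p-1}\,\|\phi_\cdot(y)-\phi_\cdot(z)\|_p<\varepsilon. \]
For $T$ of propagation at most $R$, the operator $T-\Psi\Phi(T)$ is the Schur product of $T$ with a kernel of propagation at most $R$ and entries below $\varepsilon$. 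By bounded geometry its norm is at most $N_R\,\varepsilon\,\sup|T_{yz}|\le N_R\varepsilon\|T\|$. Density of finite propagation operators and uniform contractivity give $\Psi_\alpha\Phi_\alpha\to\mathrm{id}$ in point-norm.

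It remains to see that the middle algebra is $p$-nuclear. By bounded geometry the balls $B(x,S)$ have at most $N_S$ points, so the product $\prod_x B(\ell^p(B(x,S)))$ embeds as a corner of $\ell^\infty(X)\otimes M^p_{N_S}$ after identifying each ball with a subset of $\{1,\dots,N_S\}$. Here $\ell^\infty(X)\cong C(\beta X)$ is $p$-nuclear by Lemma~\ref{lem:comm}. Tensoring with $M^p_{N_S}$ preserves $p$-nuclearity, since one can apply the factorization entrywise: $\phi\otimes\mathrm{id}$ and $\psi\otimes\mathrm{id}$ are still $p$-complete contractions into $M^p_{nN_S}$. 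The transfer lemma then finishes the argument.

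The main obstacle is the $p$-complete contractivity of $\Psi$. In particular one must check that the maps $W$ and the $q$-weighted $V$ genuinely give norm-one operators between the right $\ell^p$ spaces, so that $\Psi$ is a compression $a\mapsto V'aW$ and the $\mathcal{M}_p$ axiom applies at every matrix level. A secondary concern is checking that the corner/product identification is compatible with the $p$-operator structures.
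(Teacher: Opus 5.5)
Your proposal is correct and is essentially the paper's proof: the same compression map $\Phi$ into $\prod_{x}B(\ell^p(B(x,S)))$, the same map $(T_x)_x\mapsto\sum_x M_{\phi_x^{p-1}}T_xM_{\phi_x}$ (the paper's $U_x$, $V_x$, since $p/q=p-1$), the same kernel $k$ with the same H\"older and bounded-geometry Schur estimates, and the same route to $p$-nuclearity of the middle product (Lemma~\ref{lem:comm}, tensoring with matrix algebras, and the transfer lemma). The differences are cosmetic: you get $p$-complete contractivity of $\Psi$ from the explicit factorization $V'DW$ (the same device the paper uses later in the proof of Theorem~\ref{thm:functor}) rather than citing \cite[Lemma 6.3]{SW17}, and you realize the product as a corner of $\ell^\infty(X)\otimes M^p_{N_S}$ instead of splitting $X$ by ball cardinality and invoking Lemma~\ref{lem:directsum}.
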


\begin{proof}
Suppose $X$ has property A. Let $F$ be a nonempty finite subset of $B^p_u(X)$, and let $\varepsilon>0$.
We may assume that $F\neq\{0\}$. By density, we may also assume that all elements of $F$ have finite propagation. 
Let $R=\max\{ \mathrm{prop}(T):T\in F \}$, and let $M=\max\{ \Vert T\Vert:T\in F\}$.
Since $X$ has bounded geometry, there exists $N_R$ such that $|B(x,R)|\leq N_R$ for all $x\in X$.
There exists a map $\xi:X\to\ell^p(X)$ as in the definition above such that if $d(x,y)\leq R$, then $\Vert \xi_x-\xi_y \Vert_p<\varepsilon/(2MN_R)$.
Let $(\phi_x)_{x\in X}$ be the metric $p$-partition of unity constructed as above, so that if $d(y,z)\leq R$, then $\sum_{x\in X}|\phi_x(y)-\phi_x(z)|^p<(\varepsilon/(2MN_R))^p$.

Now note that $\prod_{x\in X}B(\ell^p(B(x,S)))$ is $p$-nuclear as follows.
Since $X$ has bounded geometry, there exists $N_S\in\mathbb{N}$ such that $|B(x,S)|\leq N_S$ for all $x\in X$.
For each $k\in\{1,\ldots,N_S\}$, let $Y_k=\{x\in X:|B(x,S)|=k\}$. Then $X=\bigsqcup_{k=1}^{N_S}Y_k$, and \[\prod_{x\in X}B(\ell^p(B(x,S)))\cong\bigoplus_{k=1}^{N_S}\prod_{x\in Y_k}M_k^p \cong \bigoplus_{k=1}^{N_S} \ell^\infty(Y_k,M_k^p) \cong \bigoplus_{k=1}^{N_S} (\ell^\infty(Y_k)\otimes_p M_k^p).\]
Since $\ell^\infty(Y_k)$ is $p$-nuclear by Lemma \ref{lem:comm}, so is $\ell^\infty(Y_k)\otimes_p M_k^p$ by \cite[Lemma 2.4(i)]{WZ}, then $\bigoplus_{k=1}^{N_S} (\ell^\infty(Y_k)\otimes_p M_k^p)$ is $p$-nuclear by Lemma \ref{lem:directsum}.

For each $x\in X$, let $\iota_x:\ell^p(B(x,S))\to\ell^p(X)$ be the inclusion map, and let $P_x:\ell^p(X)\to\ell^p(B(x,S))$ be the projection map.
Define a map 
\begin{align*}
\eta:B^p_u(X) &\to \prod_{x\in X}B(\ell^p(B(x,S))) \\ a &\mapsto (P_xa\iota_x)_{x\in X}
\end{align*}
For each $n\in\mathbb{N}$, there is an invertible isometry 
\[ W_n:\bigoplus_{i=1}^n \bigoplus_{x\in X}\ell^p(B(x,S)) \to \bigoplus_{x\in X}\bigoplus_{i=1}^n\ell^p(B(x,S)) \] given by $(\alpha^{(1)},\ldots,\alpha^{(n)}) \mapsto (\beta_x)_{x\in X}$, where $\alpha^{(i)}=(\alpha^{(i)}_x)_{x\in X}$ and $\beta_x(i)=\alpha^{(i)}_x$.
Under this identification, the amplified map $\eta_n$ sends a matrix $[a_{ij}]\in M_n(B^p_u(X))$ to $([P_xa_{ij}\iota_x])_{x\in X}\in \prod_{x\in X}M_n(B(\ell^p(B(x,S))))$,
and 
\[ \Vert \eta_n([a_{ij}]) \Vert=\sup_{x\in X}\Vert [P_xa_{ij}\iota_x] \Vert=\sup_{x\in X}\Vert (P_x\otimes I_n)[a_{ij}](\iota_x\otimes I_n) \Vert\leq\Vert [a_{ij}] \Vert. \] 
Hence $\eta$ is $p$-completely contractive.

For each $x\in X$, define $V_x\in B(\ell^p(X),\ell^p(B(x,S)))$ by \[V_x\delta_y=\phi_x(y)\delta_y\] for every $y\in X$.
Also define $U_x\in B(\ell^p(B(x,S)),\ell^p(X))$ by \[U_x\delta_y=\phi_x(y)^{p/q}\delta_y\] for every $y\in B(x,S)$, where $1/p+1/q=1$.
Then define a map 
\begin{align*}
\psi:\prod_{x\in X}B(\ell^p(B(x,S))) &\to B(\ell^p(X)) \\ (b_x)_{x\in X} &\mapsto \sum_{x\in X}U_xb_xV_x
\end{align*}
By \cite[Lemma 6.3]{SW17}, $\sum_{x\in X}U_xb_xV_x$ converges strongly to a finite propagation operator with norm at most $\sup_{x\in X}\Vert b_x\Vert$, so $\psi$ is contractive and takes values in $B^p_u(X)$.
By a similar computation, one can show that $\psi$ is $p$-completely contractive.

For $a\in B^p_u(X)$ and $y_1,y_2\in X$, the $(y_1,y_2)$-entry of $\psi\circ\eta(a)$ is given by $\sum_{x\in X}\phi_x(y_1)^{p/q}a_{y_1,y_2}\phi_x(y_2)$.
Define $k(y_1,y_2)=\sum_{x\in X}\phi_x(y_1)^{p/q}\phi_x(y_2)$.
Then $\psi\circ\eta$ is simply Schur multiplication by $k$, which we denote by $m_k$.
Moreover, letting $\zeta_y(x)=\phi_x(y)^{p/q}$ for $x,y\in X$, we have $\zeta_y\in\ell^q(X)$ and $k(y_1,y_2)=\langle \zeta_{y_1},\xi_{y_2} \rangle$.
Note that $k$ has finite propagation because $\phi_x$ is supported in $B(x,S)$.
If $d(y_1,y_2)\leq R$, then
\begin{align*}
|k(y_1,y_2)-1| &= \left| \sum_{x\in X}\phi_x(y_1)^{p/q}\phi_x(y_2)-\sum_{x\in X}\phi_x(y_1)^p \right| \\
&= \left| \sum_{x\in X}\phi_x(y_1)^{p/q}\phi_x(y_2)-\sum_{x\in X}\phi_x(y_1)^{p/q+1} \right| \\
&\leq \sum_{x\in X} \phi_x(y_1)^{p/q} \left| \phi_x(y_2)-\phi_x(y_1) \right| \\
&\leq \left( \sum_{x\in X}\phi_x(y_1)^p \right)^{1/q} \left( \sum_{x\in X} \left| \phi_x(y_2)-\phi_x(y_1) \right|^p \right)^{1/p} \\
&< \frac{\varepsilon}{2MN_R}.
\end{align*}
Hence, for every $a\in F$,
\[
\begin{aligned}
\|\psi\circ\eta(a)-a\|
&=\|m_k(a)-a\| \\
&\leq
\|a\|N_R
\sup_{d(y_1,y_2)\leq R}|k(y_1,y_2)-1| \\
&<
MN_R\frac{\varepsilon}{2MN_R}
=
\frac{\varepsilon}{2}.
\end{aligned}
\]

Put
\[
C=\prod_{x\in X}B(\ell^p(B(x,S))).
\]
Since $C$ is $p$-nuclear, Lemma~\ref{lem:pnuc}, applied to the finite set
$\eta(F)\subseteq C$, gives $n\in\mathbb N$ and $p$-complete
contractions
\[
\alpha:C\longrightarrow M_n^p,
\qquad
\beta:M_n^p\longrightarrow C
\]
such that
\[
\|\beta\circ\alpha(\eta(a))-\eta(a)\|
<
\frac{\varepsilon}{2}
\]
for all $a\in F$. Therefore
\[
\alpha\circ\eta:B_u^p(X)\longrightarrow M_n^p
\]
and
\[
\psi\circ\beta:M_n^p\longrightarrow B_u^p(X)
\]
are $p$-complete contractions, and for $a\in F$,
\[
\begin{aligned}
\|(\psi\circ\beta)\circ(\alpha\circ\eta)(a)-a\|
&\leq
\|\psi(\beta\circ\alpha(\eta(a))-\eta(a))\|
+\|\psi\circ\eta(a)-a\| \\
&<\varepsilon.
\end{aligned}
\]
Thus $B_u^p(X)$ is $p$-nuclear by Lemma~\ref{lem:pnuc}.
\end{proof}

\begin{rem}[Open question] \label{open1}
For $p\in(1,\infty)\setminus\{2\}$, it is still open as to whether $p$-nuclearity of $B^p_u(X)$ implies property A for $X$.
\end{rem}

The $p=1$ case differs from the other cases in that property A is not required to obtain 1-nuclearity of $B^1_u(X)$. This is due to observations from \cite{Zhang} that metric 1-partitions of unity and appropriate dual families always exist. In fact, a metric 1-partition on $X$ can be constructed by taking an arbitrary disjoint bounded cover $(U_i)_{i\in I}$ of $X$ and taking $\phi_i$ to be the characteristic function of $U_i$.


\begin{thm} \label{thm:1nuc}
Let $X$ be a discrete metric space with bounded geometry.
Then $B^1_u(X)$ is $1$-nuclear.
\end{thm}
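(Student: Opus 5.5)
The plan is to follow the proof of Theorem \ref{Apnuc} with $p=1$, replacing the property A partition of unity by a metric $1$-partition of unity built from a disjoint bounded cover. The point is that for $p=1$ the conjugate exponent is $q=\infty$, so $p/q=0$. Thus the operators $U_x$ become plain inclusions, with weight $\phi_x(y)^0=1$ on the support. No smallness of variation is then needed, provided the partition is chosen well.

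Let $F\subseteq B^1_u(X)$ be finite and $\varepsilon>0$. By density we may assume every element of $F$ has propagation at most $R$. I would take the trivial disjoint bounded cover by singletons, so $U_x=\{x\}$ and $\phi_x=\chi_{\{x\}}$. However, a cover by singletons only makes the Schur multiplier $k(y_1,y_2)=\sum_x \chi_{U_x}(y_1)\chi_{U_x}(y_2)$ equal to $1$ when $y_1,y_2$ lie in the same piece. That recovers only the diagonal.

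So the decisive adjustment is to choose the weights asymmetrically, exploiting the $\ell^1$ structure. Index by $x\in X$, set $V_x\delta_y=\chi_{\{x\}}(y)\delta_y$ (the $\ell^1$-side cut-down, a contraction from $\ell^1(X)$ to $\ell^1(\{x\})$), and take $U_x:\ell^1(B(x,R))\to\ell^1(X)$ to be the inclusion, i.e. the ``dual family'' from \cite{Zhang} with $\zeta_y(x)=\chi_{B(x,R)}(y)\in\ell^\infty$. Then define $\eta(a)=(P_{B(x,R)}a\iota_{\{x\}})_x$, landing in $\prod_x B(\ell^1(\{x\}),\ell^1(B(x,R)))$. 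Rather than use this rectangular product, I would embed into $\prod_x B(\ell^1(B(x,R)))$ by compressing $P_{B(x,R)}a\,\iota_{B(x,R)}$ and composing with $V_x$. Then $\psi((b_x)_x)=\sum_x U_x b_x V_x$, and the $(y_1,y_2)$ entry of $\psi\circ\eta(a)$ is $a_{y_1y_2}\sum_x\chi_{B(x,R)}(y_1)\chi_{\{x\}}(y_2)=a_{y_1y_2}$ whenever $d(y_1,y_2)\le R$. Hence $\psi\circ\eta(a)=a$ exactly for all $a$ of propagation at most $R$.

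The norm estimates should go as follows. $\eta$ is $1$-completely contractive by the same compression argument as in Theorem \ref{Apnuc}. For $\psi$, I would check directly on $\ell^1$ by evaluating on a basis vector $\delta_y$, since operator norms on $\ell^1$ are columnwise sups: $\psi((b_x))\delta_y=U_yb_y\delta_y$, so $\|\psi((b_x))\delta_y\|\le\sup_x\|b_x\|$. The matrix amplifications are handled the same way, using the shuffle isometry $W_n$ and the fact that only one $x$ contributes per column. This is where $p=1$ is essential: for $p>1$ the columnwise computation fails and one needs the $\ell^p$/$\ell^q$ balancing supplied by property A.

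Finally, $\prod_x B(\ell^1(B(x,R)))$ is $1$-nuclear, by the same decomposition into $\bigoplus_k\ell^\infty(Y_k)\otimes_1 M_k^1$ using Lemma \ref{lem:comm}, \cite[Lemma 2.4(i)]{WZ} and Lemma \ref{lem:directsum}, all valid for $p=1$. Composing the approximations as at the end of the proof of Theorem \ref{Apnuc} and applying Lemma \ref{lem:pnuc} gives the result. I expect the main obstacle to be verifying that $\psi$ is $1$-completely contractive with the asymmetric weights and that the sum converges in the appropriate topology to a bounded finite-propagation operator.
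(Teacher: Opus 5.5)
Your proposal is correct and is essentially the paper's proof. The paper uses the same compression $\eta$ into $\prod_{x}B(\ell^1(B(x,R)))$, the same cut-downs $V_x$ onto $\{x\}$, and diagonal weights $U_x$ supported on $B(x,R)$; the only difference is that it puts weight $1-\delta$ off the centre, so that $\theta\circ\eta$ is merely close to the identity, whereas your choice $\delta=0$ makes $\psi\circ\eta$ exactly the truncation to propagation $\le R$. Your column-sup check that $\psi$ is $1$-completely contractive, using that only $x=y$ contributes to the column of $\delta_y$, is valid, and it is what the paper obtains from \cite[Lemma 4.5]{Zhang}.
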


\begin{proof}
Let $F,\varepsilon,R,M,N_R$ be as in the opening of the proof of Theorem \ref{Apnuc}, and put
\[
\delta=
\min\left\{1,\frac{\varepsilon}{2MN_R}\right\}.
\]
The map \[ \eta:B^1_u(X)\to\prod_{x\in X}B(\ell^1(B(x,R))) \] defined in the same way as before is 1-completely contractive.
For each $x\in X$, let $\phi_x$ be the characteristic function of $\{x\}$, and define \[ \psi_x(y)=\begin{cases} 1 &\;\text{if}\; y=x, \\ 1-\delta &\; \text{if}\; y\in B(x,R)\setminus\{x\}, \\ 0 &\;\text{otherwise}. \end{cases} \]
For each $x\in X$, define $V_x\in B(\ell^1(X),\ell^1(B(x,R)))$ by $V_x\delta_y=\phi_x(y)\delta_y$ for every $y\in X$.
Also define $U_x\in B(\ell^1(B(x,R)),\ell^1(X))$ by $U_x\delta_y=\psi_x(y)\delta_y$ for every $y\in B(x,R)$.
Then define a map
\begin{align*}
\theta:\prod_{x\in X}B(\ell^1(B(x,R))) &\to B(\ell^1(X)) \\ (b_x)_{x\in X} &\mapsto \sum_{x\in X}U_xb_xV_x
\end{align*}
By \cite[Lemma 4.5]{Zhang}, $\sum_{x\in X}U_xb_xV_x$ converges strongly to a finite propagation operator with norm at most $\sup_{x\in X}\Vert b_x\Vert$, so $\theta$ is contractive and takes values in $B^1_u(X)$.
By a similar computation, one can show that $\theta$ is 1-completely contractive.

For $a\in B^1_u(X)$ and $y_1,y_2\in X$, the $(y_1,y_2)$-entry of $\theta\circ\eta(a)$ is given by $\sum_{x\in X}\psi_x(y_1)a_{y_1,y_2}\phi_x(y_2)=\psi_{y_2}(y_1)a_{y_1,y_2}$. 
Define $k(y_1,y_2)=\psi_{y_2}(y_1)$.
Then $\theta\circ\eta$ is simply Schur multiplication by $k$, which we denote by $m_k$.
Note that $k$ has finite propagation because $\psi_x$ is supported in $B(x,R)$.
If $d(y_1,y_2)\leq R$, then $|1-k(y_1,y_2)|\leq \delta$.
Hence $\Vert \theta\circ\eta(a)-a \Vert = \Vert m_k(a)-a \Vert < \Vert a \Vert N_R \sup_{d(y_1,y_2)\leq R}|1-k(y_1,y_2)| \leq\varepsilon/2$ for all $a\in F$.

Since
\[
C=\prod_{x\in X}B(\ell^1(B(x,R)))
\]
is $1$-nuclear by the same argument as in the proof of
Theorem~\ref{Apnuc}, Lemma~\ref{lem:pnuc} gives $n\in\mathbb N$ and $1$-complete
contractions
\[
\alpha:C\longrightarrow M_n^1,
\qquad
\beta:M_n^1\longrightarrow C
\]
such that
\[
\|\beta\circ\alpha(\eta(a))-\eta(a)\|
<
\frac{\varepsilon}{2}
\]
for all $a\in F$. It follows that
\[
\alpha\circ\eta:B_u^1(X)\longrightarrow M_n^1
\quad\text{and}\quad
\theta\circ\beta:M_n^1\longrightarrow B_u^1(X)
\]
are $1$-complete contractions whose composition approximates every
element of $F$ within $\varepsilon$. Hence $B_u^1(X)$ is
$1$-nuclear by Lemma~\ref{lem:pnuc}.
\end{proof}

\section{The $p$-invariant translation approximation property ($p$-ITAP) for discrete groups}

In this section, we introduce and study the $p$-invariant translation approximation property for discrete groups, where $p\in(1,\infty)$.

Let $\Gamma$ be a discrete group equipped with a proper right-invariant metric, making it a metric space with bounded geometry, which we denote by $|\Gamma|$. 
The left and right regular representations of $\Gamma$ on $\ell^p(\Gamma)$, denoted by $\lambda$ and $\rho$ respectively, are given by
\begin{align*}
(\lambda_sf)(t) &= f(s^{-1}t), \\
(\rho_sf)(t) & = f(ts)
\end{align*}
for $s,t\in\Gamma$ and $f\in\ell^p(\Gamma)$.

The reduced $L^p$ operator algebra of $\Gamma$, denoted by $F^p_\lambda(\Gamma)$, is the $L^p$ operator algebra generated by $\lambda(\Gamma)\subseteq B(\ell^p(\Gamma))$.
In the literature, this is also known as the algebra of $p$-pseudofunctions of $\Gamma$.
Observe that $\lambda_s$ is given by the $\Gamma\times\Gamma$ matrix $T$ where $T_{sg,g}=1$ for all $g\in\Gamma$ and all other entries are 0. Thus $\{ T\in\mathbb{C}[|\Gamma|]: T_{sr,tr}=T_{s,t}\;\text{for all}\;s,t,r\in\Gamma \}$ is a copy of the group ring $\mathbb{C}\Gamma$, and so $F^p_\lambda(\Gamma)$ is naturally contained in $B^p_u|\Gamma|$.

The adjoint action $\mathrm{Ad}(\rho)$ of $\Gamma$ on $B(\ell^p(\Gamma))$ is given by conjugation with $\rho$, i.e., $\mathrm{Ad}(\rho_g)(T) = \rho_gT\rho_{g^{-1}}$ for $g\in\Gamma$ and $T\in B(\ell^p(\Gamma))$.
Note that $\langle \delta_s,\mathrm{Ad}(\rho_g)(T)\delta_t \rangle = \langle \delta_{sg},T\delta_{tg} \rangle = T_{sg,tg}$ for $s,t,g\in\Gamma$.
Since $\mathrm{Ad}(\rho_g)$ sends finite propagation operators to finite propagation operators, the adjoint action preserves the $\ell^p$ uniform Roe algebra.
Moreover, as discussed in the previous paragraph, the group ring consists precisely of the elements in $\mathbb{C}[|\Gamma|]$ fixed by all $\mathrm{Ad}(\rho_g)$.

Let $B^p_u|\Gamma|^\Gamma$ denote the set of all fixed points of the adjoint action in $B^p_u|\Gamma|$.
Then $B^p_u|\Gamma|^\Gamma = B^p_u|\Gamma| \cap \rho(\Gamma)'$, where $\rho(\Gamma)'$ denotes the set of all operators in $B(\ell^p(\Gamma))$ commuting with $\rho(\Gamma)$, and is also commonly known as the algebra of $p$-convoluters denoted by $CV_p(\Gamma)$. 
It was shown by Daws and Spronk \cite[Theorem 1.1]{DS} that if $\Gamma$ has the approximation property of Haagerup-Kraus \cite{HK}, then $CV_p(\Gamma)$ coincides with the algebra of $p$-pseudomeasures $PM_p(\Gamma)$, which is the weak*-closed linear span of $\lambda(\Gamma)$ in $B(\ell^p(\Gamma))$. Vergara generalized this result, showing that if $\Gamma$ has the $p$-approximation property, then $PM_p(\Gamma)=CV_p(\Gamma)$ \cite[Theorem 6.1]{Ver}.
In general, one only has the inclusion $PM_p(\Gamma)\subseteq CV_p(\Gamma)$, and the bicommutant $PM_p(\Gamma)''$ equals $CV_p(\Gamma)$ \cite[Theorem 1.2]{DS}.

We clearly have $F^p_\lambda(\Gamma)\subseteq B^p_u|\Gamma|^\Gamma$ for every discrete group $\Gamma$.

\begin{defn} \label{def:pITAP}
For $1<p<\infty$, a discrete group $\Gamma$ is said to have the $p$-invariant translation approximation property ($p$-ITAP) if \[ F^p_\lambda(\Gamma) = B^p_u|\Gamma|^\Gamma. \]
\end{defn}

The 2-ITAP was introduced by Roe \cite[Section 11.5.3]{RoeL}.
For $p,q\in(1,\infty)$ with $1/p+1/q=1$, the map $\delta_s\mapsto\delta_{s^{-1}}$ extends to an isometric anti-isomorphism between $F^p_\lambda(\Gamma)$ and $F^q_\lambda(\Gamma)$ (cf. \cite[Proposition 4.20]{Gar}).
On the other hand, taking conjugate transposes gives an isometric anti-isomorphism between $B^p_u|\Gamma|^\Gamma$ and $B^q_u|\Gamma|^\Gamma$. 
It follows that $\Gamma$ has the $p$-ITAP if and only if it has the $q$-ITAP.

In this paper, we shall study the following stronger property by introducing coefficients in $p$-operator spaces in the spirit of Zacharias in the $p=2$ case \cite{Zach}.
Note that in general, we have the inclusions \[ F^p_\lambda(\Gamma)\otimes S \subseteq B^p_u|\Gamma|^\Gamma\otimes S \subseteq (B^p_u|\Gamma|\otimes S)^\Gamma \subseteq B^p_u(|\Gamma|,S)^\Gamma \]  for any discrete group $\Gamma$ and any $p$-operator space $S$, where $\Gamma$ acts trivially on $S$.

\begin{defn}
Let $\Gamma$ be a discrete group.
Let $S\subseteq B(E)$ be a $p$-operator space, where $E$ is an $SQ_p$ space.
We say that $\Gamma$ has the $p$-invariant translation approximation property for $S$ if
\[ F^p_\lambda(\Gamma)\otimes S = B^p_u(|\Gamma|,S)^\Gamma, \]
where $\Gamma$ acts trivially on $S$. 

If $\Gamma$ has the $p$-invariant translation approximation property for every $p$-operator space $S\subseteq K(\ell^p)$, then we say that $\Gamma$ has the $p$-operator invariant translation approximation property ($p$-operator ITAP).
\end{defn}

Given a discrete group $\Gamma$, a function $\phi:\Gamma\to\mathbb{C}$ is said to be of positive type if for all $n\in\mathbb{N}$, $g_1,\ldots,g_n\in\Gamma$, and $\lambda_1,\ldots,\lambda_n\in\mathbb{C}$, we have
\[ \sum_{i,j=1}^n \lambda_i\overline{\lambda_j}\phi(g_i^{-1}g_j)\geq 0. \]
Negative type functions can be defined analogously.
The function $\phi$ is said to be normalized if $\phi(e)=1$, where $e$ is the identity element in $\Gamma$.
Given a positive type function $\phi$ on $\Gamma$, one can define a kernel $k:\Gamma\times\Gamma\to\mathbb{C}$ by $k(g,h)=\phi(gh^{-1})$.
Then $k$ is of positive type and is invariant under the right diagonal action of $\Gamma$ on $\Gamma\times\Gamma$.
Note that a normalized, positive type kernel is self-adjoint, i.e., $k(g,h)=\overline{k(h,g)}$ for all $g,h\in\Gamma$.
Given such a kernel $k$, there exists a complex Hilbert space $H$ and a map $f:\Gamma\to H$ whose range consists of unit vectors, and $k(g,h)=\langle f(g),f(h) \rangle$ for all $g,h\in\Gamma$ \cite[Proposition 8.5(i)]{BL}. 
Thus Schur multiplication by $k$ defines a completely contractive map on $B(\ell^2(\Gamma))$.

In the notation of \cite[Definition 8.1]{Daws10}, $\phi\in M_0A_2(\Gamma)=M_{cb}A_2(\Gamma)$, the equality being due to Jolissaint \cite{Jol}. Then since Hilbert space is an $SQ_p$ space for all $p\in(1,\infty)$ \cite[Theorem 1]{Herz}, the identity map defines a contraction from $M_{cb}A_2(\Gamma)$ to $M_{cb}A_p(\Gamma)=M_0A_p(\Gamma)$ for all $p\in(1,\infty)$, the equality being due to Daws \cite[Theorem 8.3]{Daws10}.
Finally, by \cite[Theorem 8.6]{Daws10}, $k$ is a contractive Schur multiplier on $B(\ell^p(\Gamma))$.
By \cite[Theorem 5.11 and Corollary 8.2]{Pisier}, this is equivalent to $k$ being a $p$-completely contractive Schur multiplier on $B(\ell^p(\Gamma))$.
Moreover, such a Schur multiplication maps $B^p_u|\Gamma|$ to $B^p_u|\Gamma|$, and maps $CV_p(\Gamma)$ to $CV_p(\Gamma)$, thus it maps $B^p_u|\Gamma|^\Gamma$ to $B^p_u|\Gamma|^\Gamma$.


\begin{prop} (cf. \cite[Proposition 11.46]{RoeL}) \label{prop:eg}
Let $\Gamma$ be a discrete group.
Suppose there is an approximate unit $(\phi_n)$ for $C_0(\Gamma)$ such that
\begin{enumerate}
\item Each $\phi_n$ is of positive type and normalized.
\item Schur multiplication $\mathcal{M}_{k_n}$ by the kernel $k_n(g,h)=\phi_n(gh^{-1})$ maps $CV_p(\Gamma)$ into $F^p_\lambda(\Gamma)$.
\end{enumerate}
Then $\Gamma$ has the $p$-ITAP.
\end{prop}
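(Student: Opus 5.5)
We must show that every $T\in B^p_u|\Gamma|^\Gamma$ lies in $F^p_\lambda(\Gamma)$; the reverse inclusion was already noted. Fix such a $T$ and $\varepsilon>0$. The plan is to approximate $T$ by $\mathcal{M}_{k_n}(T)$. Since $B^p_u|\Gamma|^\Gamma=B^p_u|\Gamma|\cap CV_p(\Gamma)\subseteq CV_p(\Gamma)$, hypothesis (2) gives $\mathcal{M}_{k_n}(T)\in F^p_\lambda(\Gamma)$ for every $n$. As $F^p_\lambda(\Gamma)$ is norm closed, it therefore suffices to show that $\mathcal{M}_{k_n}(T)\to T$ in norm.

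For the convergence we use the preceding discussion. Because each $\phi_n$ is normalized and of positive type, $\mathcal{M}_{k_n}$ is a contractive (indeed $p$-completely contractive) Schur multiplier on $B(\ell^p(\Gamma))$, so $\|\mathcal{M}_{k_n}\|\le 1$ uniformly in $n$. A standard $\varepsilon/3$ argument then reduces the problem to elements of a dense subset of $B^p_u|\Gamma|$. We take $T'\in\mathbb{C}^p_u[|\Gamma|]$ with $\|T-T'\|<\varepsilon$; it does not need to be $\Gamma$-invariant, since the convergence estimate below uses no invariance. Then
\[
\|\mathcal{M}_{k_n}(T)-T\|\le \|\mathcal{M}_{k_n}(T-T')\|+\|\mathcal{M}_{k_n}(T')-T'\|+\|T'-T\|\le 2\varepsilon+\|\mathcal{M}_{k_n}(T')-T'\|.
\]

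Now let $T'$ have propagation at most $R$. Since the metric is proper, $F=\{g\in\Gamma: d(g,e)\le R\}$ is finite, and by right invariance $d(g,h)=d(gh^{-1},e)$. Hence every nonzero entry $T'_{g,h}$ has $gh^{-1}\in F$. The difference $\mathcal{M}_{k_n}(T')-T'$ has entries $(\phi_n(gh^{-1})-1)T'_{g,h}$. It decomposes as a sum over $s\in F$ of "diagonals" $D_s$, where $D_s$ has entries supported on $\{gh^{-1}=s\}$ and bounded by $|\phi_n(s)-1|\,\sup|T'_{g,h}|$. Each such diagonal is a weighted translation, so its norm on $\ell^p$ is at most the supremum of its entries. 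This gives
\[
\|\mathcal{M}_{k_n}(T')-T'\|\le |F|\,\max_{s\in F}|\phi_n(s)-1|\,\sup_{g,h}|T'_{g,h}|.
\]
Finally, since $(\phi_n)$ is an approximate unit for $C_0(\Gamma)$, it converges to $1$ pointwise, hence uniformly on the finite set $F$. So this term tends to $0$, and we conclude $T\in F^p_\lambda(\Gamma)$.

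The argument is short once the uniform contractivity of $\mathcal{M}_{k_n}$ on $B(\ell^p(\Gamma))$ is in hand, and that was established in the discussion preceding the statement via Daws and Pisier. The only step needing care is checking that this bound indeed holds with constant $1$ independent of $n$. Membership of $\mathcal{M}_{k_n}(T)$ in $F^p_\lambda(\Gamma)$ is given directly by hypothesis (2).
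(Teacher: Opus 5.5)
Your proposal is correct and follows the paper's proof. Both use the uniform $p$-complete contractivity of $\mathcal{M}_{k_n}$ from the preceding discussion, hypothesis (2), and the norm-closedness of $F^p_\lambda(\Gamma)$; the paper simply asserts $\mathcal{M}_{k_n}(T)\to T$ in norm, and your $\varepsilon/3$ reduction to finite-propagation operators, with the weighted-translation estimate on each diagonal, supplies the justification for that step.
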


\begin{proof}
By the preceding discussion, $\mathcal{M}_{k_n}$ is $p$-completely contractive on $B(\ell^p(\Gamma))$.
Let $T\in B^p_u|\Gamma| \cap CV_p(\Gamma) = B^p_u|\Gamma|^\Gamma$. Then $\mathcal{M}_{k_n}(T)\to T$ in norm. Since $\mathcal{M}_{k_n}(T)\in F^p_\lambda(\Gamma)$ for each $n$, we have $T\in F^p_\lambda(\Gamma)$.
\end{proof}

\begin{cor} \label{amenable}
Amenable groups have the $p$-ITAP for all $p\in(1,\infty)$.
\end{cor}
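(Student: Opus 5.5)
We will verify the two hypotheses of Proposition \ref{prop:eg} for an amenable group $\Gamma$ and conclude directly.

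\textbf{Choice of the functions $\phi_n$.} By amenability (Følner's condition), take an increasing sequence, or a net if $\Gamma$ is uncountable, of finite sets $F_n\subseteq\Gamma$ with $|sF_n\,\triangle\,F_n|/|F_n|\to 0$ for every $s\in\Gamma$. Put $\xi_n=|F_n|^{-1/2}\chi_{F_n}\in\ell^2(\Gamma)$ and
\[ \phi_n(s)=\langle \lambda_2(s)\xi_n,\xi_n\rangle=\frac{|sF_n\cap F_n|}{|F_n|}. \]
Each $\phi_n$ is a normalized positive type function, since it is a diagonal matrix coefficient of a unitary representation with $\|\xi_n\|=1$. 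It is finitely supported, with support contained in $F_nF_n^{-1}$, and the Følner condition gives $\phi_n\to1$ pointwise. Because the $\phi_n$ are finitely supported and uniformly bounded by $1$, pointwise convergence to $1$ gives $\phi_n f\to f$ in sup norm for every $f\in C_0(\Gamma)$. So $(\phi_n)$ is an approximate unit for $C_0(\Gamma)$, and hypothesis (1) holds. The proof of Proposition \ref{prop:eg} uses this in the form that $\mathcal{M}_{k_n}(T)\to T$ in norm for $T\in B^p_u|\Gamma|$. That follows by approximating $T$ by finite propagation operators: properness makes the propagation-$R$ band correspond to a finite subset of $\Gamma$, on which $\phi_n\to1$ uniformly. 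The error is then controlled by uniform $p$-complete contractivity of the $\mathcal{M}_{k_n}$, as in the estimate in the proof of Theorem \ref{Apnuc}.

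\textbf{Hypothesis (2).} Let $T\in CV_p(\Gamma)=\rho(\Gamma)'$. Its matrix satisfies $T_{sr,tr}=T_{s,t}$, so $T_{s,t}=c(st^{-1})$ with $c(g)=T_{g,e}$, and $T$ is the right-invariant matrix determined by $c$. The kernel $k_n(s,t)=\phi_n(st^{-1})$ is also right-invariant, so $\mathcal{M}_{k_n}(T)$ has matrix $(\phi_nc)(st^{-1})$. Since $\phi_n$ has finite support, this matrix is the finite sum $\sum_{g\in\operatorname{supp}\phi_n}\phi_n(g)c(g)\lambda_g$. It therefore lies in the group ring $\mathbb{C}\Gamma\subseteq F^p_\lambda(\Gamma)$, using the identification of $\lambda_g$ with the matrix $T_{gh,h}=1$ from the discussion above.

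Both hypotheses hold, so Proposition \ref{prop:eg} yields the $p$-ITAP for every $p\in(1,\infty)$. The one point needing care is the approximate-unit and norm-convergence step, in particular that finite support of $\phi_n$, and not just decay, is what places $\mathcal{M}_{k_n}(CV_p(\Gamma))$ inside $F^p_\lambda(\Gamma)$. The Følner construction provides exactly this.
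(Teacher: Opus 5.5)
Your proposal is correct and follows the paper's proof: you use the same F\o lner functions $\phi_n(s)=|sF_n\cap F_n|/|F_n|$, note that their finite support forces $\mathcal{M}_{k_n}(CV_p(\Gamma))\subseteq\mathbb{C}\Gamma$, and then apply Proposition \ref{prop:eg}. Your extra details, namely pointwise convergence from the F\o lner condition and the density argument for $\mathcal{M}_{k_n}(T)\to T$ on $B^p_u|\Gamma|$, fill in what the paper leaves implicit; the caveat about nets for uncountable $\Gamma$ is unnecessary, since a group admitting a proper metric is countable.
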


\begin{proof}
If $\Gamma$ is an amenable discrete group, then $C_0(\Gamma)$ has an approximate unit consisting of finitely supported normalized functions of positive type. For instance, given a F\o lner net $(F_i)$ of finite subsets of $\Gamma$, define $\phi_i:\Gamma\to\mathbb{C}$ by $\phi_i(s)=|F_i \cap sF_i| / |F_i|$ to obtain the desired approximate unit.
The associated kernels have finite propagation, so the Schur multipliers map $CV_p(\Gamma)$ into $\mathbb{C}\Gamma\subset F^p_\lambda(\Gamma)$.
Hence, $\Gamma$ has the $p$-ITAP by Proposition \ref{prop:eg}.
\end{proof}

\begin{defn}
A length function on a discrete group $\Gamma$ is a function $l:\Gamma\to[0,\infty)$ satisfying:
\begin{enumerate}
\item $l(e)=0$, where $e$ is the identity element of $\Gamma$;
\item $l(g)=l(g^{-1})$ for all $g\in\Gamma$;
\item $l(gh)\leq l(g)+l(h)$ for all $g,h\in\Gamma$.
\end{enumerate}
\end{defn}

\begin{defn} \cite[Definition 3.5]{AOP}
For $p\in[1,\infty)$, a discrete group $\Gamma$ is said to have property RD$_p$ with respect to a length function $l$ if there exist $C>0$ and $s>0$ such that for all $f\in\mathbb{C}\Gamma$, we have
\[ \|f\|_{B(\ell^p(\Gamma))} \leq C\| f(1+l)^s \|_{\ell^p(\Gamma)}. \]
\end{defn}

This definition is equivalent to \cite[Definition 4.1]{LY} by \cite[Proposition 3.7]{AOP}.

\begin{prop} \label{prop:RDp}
For $p\in(1,\infty)$, if a discrete group $\Gamma$ has property RD$_p$ with respect to a length function of negative type, then $\Gamma$ has the $p$-ITAP.

In particular, if $\Gamma$ has property RD$_2$ with respect to a length function of negative type, then $\Gamma$ has the $p$-ITAP for all $p\in(1,\infty)$.
\end{prop}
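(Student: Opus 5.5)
We will apply Proposition \ref{prop:eg}, modelled on Roe's argument for RD groups in the $p=2$ case. Let $l$ be the length function of negative type with respect to which $\Gamma$ has RD$_p$. By Schoenberg's theorem, for each $t>0$ the function $\phi_t(g)=e^{-tl(g)}$ is of positive type and normalized, since $l(e)=0$. We first treat the case where $l$ is proper, i.e. $\{g: l(g)\le r\}$ is finite for every $r$. Then $\phi_t\in C_0(\Gamma)$, and $\phi_t\to 1$ pointwise as $t\to 0$. It follows that $(\phi_{1/n})$ is an approximate unit for $C_0(\Gamma)$, which gives hypothesis (1) of Proposition \ref{prop:eg}. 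If $l$ is not proper, the plan must be adjusted. One option is to replace $l$ by $l+l'$, where $l'$ is a proper length function. However, RD$_p$ is stable under enlarging $l$ and negative type is stable under sums, so we would need $l'$ itself to be of negative type, and that is not automatic. The cleaner route is to note that the argument below only needs $\phi_t$ to be $p$-summable with rapid decay, and to check whether this holds directly from RD$_p$ via the growth estimates of \cite{AOP}. I expect the intended reading is that $l$ is proper, as in the standard RD setting, and I will adopt that.

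The main step is hypothesis (2): $\mathcal{M}_{k_t}$ maps $CV_p(\Gamma)$ into $F^p_\lambda(\Gamma)$. Take $T\in CV_p(\Gamma)$. Since $T$ commutes with $\rho(\Gamma)$, it is left convolution by the function $f=T\delta_e\in\ell^p(\Gamma)$, meaning $T_{s,t}=f(st^{-1})$. Then $\mathcal{M}_{k_t}(T)$ is the convoluter with symbol $f\phi_t$. We apply the RD$_p$ inequality to the truncations $f\phi_t\chi_{B_r}$, where $B_r=\{l\le r\}$ is a finite set and so $f\phi_t\chi_{B_r}\in\mathbb{C}\Gamma$. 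The inequality gives
\[ \|\lambda((f\phi_t)(\chi_{B_r}-\chi_{B_{r'}}))\|\le C\|f\phi_t(1+l)^s(\chi_{B_r}-\chi_{B_{r'}})\|_p. \]
The function $\phi_t(1+l)^s$ is bounded, because $e^{-tl}(1+l)^s$ is bounded in $l\ge 0$. Hence the right-hand side is at most $C'\|f(\chi_{B_r}-\chi_{B_{r'}})\|_p$, which tends to $0$ as $r,r'\to\infty$ because $f\in\ell^p$. So the truncations form a Cauchy sequence in $F^p_\lambda(\Gamma)$. Their limit agrees with $\mathcal{M}_{k_t}(T)$ on each $\delta_g$, and the operators are uniformly bounded, so the limit is exactly $\mathcal{M}_{k_t}(T)$. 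Therefore $\mathcal{M}_{k_t}(T)\in F^p_\lambda(\Gamma)$.

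The obstacle I expect is the bookkeeping between left and right conventions. We need $T_{s,t}=f(st^{-1})$ and $k_t(g,h)=\phi_t(gh^{-1})$ to match, so that the Schur product really is the convoluter with symbol $f\phi_t$. We also need $\lambda(f)$ to be the matrix $[f(st^{-1})]$, consistent with $(\lambda_s)_{sg,g}=1$; this checks out. With both hypotheses verified, Proposition \ref{prop:eg} gives the $p$-ITAP.

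For the final assertion, we use the paragraph before Proposition \ref{prop:eg} together with the interpolation and duality results of \cite{LY,AOP}: RD$_2$ with respect to $l$ implies RD$_p$ with respect to $l$ for all $p\in(1,\infty)$. If this implication is only available for $p\le 2$ or only for $p\ge2$, we cover the remaining range by the observation after Definition \ref{def:pITAP} that the $p$-ITAP is equivalent to the $q$-ITAP for conjugate exponents.
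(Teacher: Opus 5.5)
Your argument follows the paper's proof. Both use Schoenberg's lemma for $e^{-tl}$, Proposition~\ref{prop:eg}, the symbol $f=T\delta_e\in\ell^p(\Gamma)$ of a convoluter, boundedness of $e^{-tl}(1+l)^s$ to feed $f\phi_t$ into the RD$_p$ inequality, and $p$/$q$ duality for the last assertion. Your truncation/Cauchy argument and your check of the left/right conventions fill in details the paper leaves implicit. Two points need repair.

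\textbf{Properness of $l$.} By adopting properness of $l$ you prove only a special case, since the proposition allows any length function of negative type. The paper does not address properness either, but the restriction is unnecessary. The proof of Proposition~\ref{prop:eg} never uses $\phi_n\in C_0(\Gamma)$. It uses only that $\mathcal{M}_{k_n}(T)\to T$ for $T\in B^p_u|\Gamma|^\Gamma$. Since $\|\mathcal{M}_{k_n}\|\le 1$, it is enough to check this for $T'\in\mathbb{C}^p_u[|\Gamma|]$ of propagation at most $R$. For such $T'$,
\[ \|\mathcal{M}_{k_n}(T')-T'\|\le N_R\,\sup_{g\in B(e,R)}|\phi_n(g)-1|\,\|T'\|, \]
and $B(e,R)$ is finite because the metric on $|\Gamma|$ (not $l$) is proper. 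So pointwise convergence $e^{-l(g)/n}\to 1$, which holds for every $l$, suffices. In your truncation step, replace the sets $\{l\le r\}$ by any increasing exhaustion of $\Gamma$ by finite sets; your estimate only uses $\sup_{x\ge 0}e^{-tx}(1+x)^s<\infty$. You can therefore drop the case distinction and the detour through $l+l'$.

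\textbf{The final assertion.} Your claim that RD$_2$ implies RD$_p$ for all $p\in(1,\infty)$ is false for $p>2$. Take $F_2$ with word length and let $f=\chi_{S_n}$ be the indicator of the $n$-sphere. Since $f$ is symmetric, $\lambda_p(f)$ and $\lambda_q(f)$ are transposes of each other and have the same norm. Riesz--Thorin then gives $\|\lambda_p(f)\|\ge\|\lambda_2(f)\|\ge\|f\|_2=|S_n|^{1/2}$. RD$_p$ would force $\|\lambda_p(f)\|\le C(1+n)^s|S_n|^{1/p}$, which is impossible because $|S_n|$ grows exponentially. The correct statement is that RD$_2$ implies RD$_p$ for $p\in(1,2]$ (\cite[Proposition 3.8]{AOP}, \cite[Theorem 4.4]{LY}). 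The equivalence of the $p$-ITAP and the $q$-ITAP then covers $p\ge 2$. Your fallback is exactly this, so the conclusion survives, but it should be stated as the argument rather than as a contingency.
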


\begin{proof}
Given a length function $l$ of negative type on $\Gamma$, the functions $\phi_n(g)=e^{-l(g)/n}$ are normalized, and are of positive type by Schoenberg's lemma \cite[Proposition 8.4]{BL} (also see \cite{Schoen}).
Moreover, they form an approximate unit for $C_0(\Gamma)$.

For any $T\in CV_p(\Gamma)$, we have $(b_g)_{g\in\Gamma}:=T\delta_e\in\ell^p(\Gamma)$.
Then \[ \sum_{g\in\Gamma} |\phi_n(g)b_g|^p(1+l(g))^{ps}<\infty. \] 
Since $\Gamma$ has property RD$_p$ with respect to $l$, the series $\sum_{g\in\Gamma}\phi_n(g)b_g\lambda_g$ converges in norm to an element of $F^p_\lambda(\Gamma)$, and is equal to $\mathcal{M}_{k_n}(T)$.
Hence, $\Gamma$ has the $p$-ITAP by Proposition \ref{prop:eg}.

The final statement follows from the fact that RD$_2$ implies RD$_p$ for all $p\in(1,2]$ by \cite[Proposition 3.8]{AOP} or \cite[Theorem 4.4]{LY}, and the fact that the $p$-ITAP is equivalent to the $q$-ITAP if $1/p+1/q=1$ by the discussion after Definition \ref{def:pITAP}.
\end{proof}

Examples of groups having property RD$_2$ with respect to a length function of negative type include groups acting properly discontinuously on a finite-dimensional CAT(0) cube complex with uniformly bounded stabilizers. 
The class of such groups include free groups, finitely generated Coxeter groups, and some small cancellation groups (cf. \cite{BN} and the references therein).
Another class of examples are the groups acting co-compactly and properly discontinuously on real or complex hyperbolic space \cite{FH,Jol1}.

Next, we shall show that the $p$-ITAP passes to subgroups.
Let $G$ be a discrete group, and let $H$ be a subgroup of $G$.
Consider the decomposition of $G$ into right cosets of $H$, i.e., $G=\bigsqcup_{r\in R}Hr$, where $R$ is a choice of representatives of the right cosets of $H$ with the identity element $e$ representing the identity coset $H$.
With respect to this decomposition, we have $\ell^p(G)\cong\bigoplus_{r\in R}\ell^p(Hr)$, so every $\xi\in\ell^p(G)$ may be written as $(\xi_r)_{r\in R}$ with $\xi_r\in\ell^p(Hr)\cong\ell^p(H)$.
Then every $T\in B(\ell^p(H))$ gives rise to some $J(T)\in B(\ell^p(G))$, where $J(T)\xi=(T\xi_r)_{r\in R}$.
Note that $J$ is an isometry.

Let $i:\ell^p(H)=\ell^p(He)\to\ell^p(G)$ be the inclusion map, and let $1_H:\ell^p(G)\to\ell^p(H)=\ell^p(He)$ be the canonical projection map. Then $J(T)i=iT$ and $1_HJ(T)=T1_H$ for all $T\in B(\ell^p(H))$.

Let $\rho_G:G\to B(\ell^p(G))$ and $\rho_H:H\to B(\ell^p(H))$ be the respective right regular representations.
Given $\xi\in\ell^p(G)$, $g\in G$, and $r\in R$, write $rg=h'r'$ for some $h'\in H$ and $r'\in R$.
Then for $h\in H$, we have $(\rho_G(g)\xi)_r(h) = (\rho_G(g)\xi)(hr) = \xi(hrg) = \xi(hh'r') = \xi_{r'}(hh') = (\rho_H(h')\xi_{r'})(h)$, i.e., \[ (\rho_G(g)\xi)_r = \rho_H(h')\xi_{r'}. \]

\begin{lem}
For $T\in B(\ell^p(H))$, if $T\rho_H(h)=\rho_H(h)T$ for all $h\in H$, then $J(T)\rho_G(g)=\rho_G(g)J(T)$ for all $g\in G$. Hence $J$ maps $CV_p(H)$ to $CV_p(G)$.
Also, $J$ maps $B^p_u|H|$ to $B^p_u|G|$.
Hence $J$ maps $B^p_u|H|^H$ to $B^p_u|G|^G$.
\end{lem}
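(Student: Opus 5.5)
The plan is to verify the three assertions in turn, using the coset formula $(\rho_G(g)\xi)_r=\rho_H(h')\xi_{r'}$ (where $rg=h'r'$) derived just before the statement.

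\emph{Commutation.} Let $T\in B(\ell^p(H))$ commute with $\rho_H(H)$, and fix $g\in G$ and $\xi\in\ell^p(G)$. For each $r\in R$ write $rg=h'r'$. Then
\[ (J(T)\rho_G(g)\xi)_r = T(\rho_G(g)\xi)_r = T\rho_H(h')\xi_{r'} = \rho_H(h')T\xi_{r'} = \rho_H(h')(J(T)\xi)_{r'} = (\rho_G(g)J(T)\xi)_r. \]
The last equality is the coset formula applied to $J(T)\xi$ in place of $\xi$. Since the components determine the vector, $J(T)$ commutes with $\rho_G(g)$. This gives $J(CV_p(H))\subseteq CV_p(G)$.

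\emph{Roe algebras.} I assume the metric on $H$ is the restriction of the proper right-invariant metric on $G$, which is again proper and right-invariant. The matrix entries of $J(T)$ are computed directly: $J(T)_{h_1r_1,h_2r_2}=\delta_{r_1,r_2}T_{h_1,h_2}$. By right-invariance, $d_G(h_1r,h_2r)=d_G(h_1,h_2)=d_H(h_1,h_2)$. Hence $\mathrm{prop}(J(T))\le\mathrm{prop}(T)$, so $J$ maps $\mathbb{C}^p_u[|H|]$ into $\mathbb{C}^p_u[|G|]$. Because $J$ is a linear isometry, it maps closures to closures, and therefore $J(B^p_u|H|)\subseteq B^p_u|G|$.

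\emph{Fixed points.} Combining the two steps,
\[ J(B^p_u|H|^H)=J(B^p_u|H|\cap CV_p(H))\subseteq B^p_u|G|\cap CV_p(G)=B^p_u|G|^G. \]

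The only point I expect to need care is the compatibility of metrics in the second step. If $|H|$ carries some other proper right-invariant metric, I would instead invoke coarse equivalence: any two proper right-invariant metrics on a countable group are coarsely equivalent, and the finite-propagation algebra does not depend on which such metric is chosen. With that, the propagation bound carries over up to a change of constant, which still keeps it finite.
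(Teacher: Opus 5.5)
Your proposal is correct and follows essentially the same route as the paper. The commutation is checked via the coset formula $(\rho_G(g)\xi)_r=\rho_H(h')\xi_{r'}$ (you work componentwise, the paper evaluates pointwise at $hr$), and the Roe-algebra inclusion comes from $\mathrm{prop}(J(T))\le\mathrm{prop}(T)$ by right-invariance plus continuity of $J$; your explicit matrix-entry formula and your remark on the choice of metric on $H$ only make explicit what the paper leaves implicit.
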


\begin{proof}
Given $\xi\in\ell^p(G)$, $g\in G$, $h\in H$, and $r\in R$, write $rg=h'r'$ for some $h'\in H$ and $r'\in R$, then we have
\begin{align*}
(J(T)\rho_G(g)\xi)(hr) &= (T(\rho_G(g)\xi)_r)(h) = (T(\rho_H(h')\xi_{r'}))(h) \\
&= ((\rho_H(h')T)\xi_{r'})(h) = (T\xi_{r'})(hh') \\
&= (J(T)\xi)(hh'r') = (J(T)\xi)(hrg) \\
&= (\rho_G(g)J(T)\xi)(hr).
\end{align*}
Since $G$ is equipped with a right-invariant metric, it follows from the definition of $J$ that the propagation of $J(T)$ is at most the propagation of $T$ for $T\in \mathbb{C}^p_u[|H|]$. Hence $J$ maps $B^p_u|H|$ to $B^p_u|G|$ by continuity.
\end{proof}

Define $E:B(\ell^p(G))\to B(\ell^p(H))$ by $E(T)=1_HTi$.
Then $E$ is a linear map of norm one.
For $T\in B(\ell^p(H))$, we have $E(J(T)) = 1_HJ(T)i = 1_Hi T = T$.
Moreover, for $T,T'\in B(\ell^p(H))$ and $S\in B(\ell^p(G))$, we have
$E(J(T)SJ(T')) = 1_Hj(T)SJ(T')i = T1_HSiT'=TE(S)T'$.

\begin{lem}
$E$ maps $CV_p(G)$ to $CV_p(H)$, and maps $F^p_\lambda(G)$ to $F^p_\lambda(H)$.
Also, $E$ maps $B^p_u|G|$ to $B^p_u|H|$.
Hence $E$ maps $B^p_u|G|^G$ to $B^p_u|H|^H$.
\end{lem}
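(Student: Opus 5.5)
We verify each of the four assertions directly from the definition $E(T)=1_HTi$, using matrix entries. For $s,t\in H$ we have
\[
\langle \delta_s, E(T)\delta_t\rangle=\langle \delta_s, T\delta_t\rangle,
\]
so $E(T)$ is simply the $H\times H$ corner of the $G\times G$ matrix of $T$.

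\emph{Convoluters.} Take $T\in CV_p(G)$ and $h\in H$. Since $H=He$ is invariant under right multiplication by $h$, the operator $\rho_G(h)$ leaves $\ell^p(H)$ invariant. It restricts there to $\rho_H(h)$, and it commutes with the projection $1_H$. So $\rho_G(h)i=i\rho_H(h)$ and $1_H\rho_G(h)=\rho_H(h)1_H$. Therefore
\[
E(T)\rho_H(h)=1_HT\rho_G(h)i=1_H\rho_G(h)Ti=\rho_H(h)E(T),
\]
and $E(T)\in CV_p(H)$. Equivalently, in entries: $T_{sg,tg}=T_{s,t}$ for all $g\in G$ implies the same identity for $g\in H$.

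\emph{Reduced algebras.} Since $E$ is linear and contractive, it suffices to check $E(\lambda_G(g))$ for each $g\in G$. The entries of $\lambda_G(g)$ are $\langle\delta_s,\lambda_G(g)\delta_t\rangle=1$ if $s=gt$ and $0$ otherwise. For $s,t\in H$ the condition $s=gt$ forces $g=st^{-1}\in H$. Hence $E(\lambda_G(g))=\lambda_H(g)$ when $g\in H$, and $E(\lambda_G(g))=0$ otherwise. So $E(\mathbb{C}G)\subseteq\mathbb{C}H$, and by continuity of $E$ and density, $E(F^p_\lambda(G))\subseteq F^p_\lambda(H)$.

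\emph{Uniform Roe algebras.} Equip $H$ with the restriction of the proper right-invariant metric on $G$. This is a proper right-invariant metric on $H$; if $|H|$ is instead given some other proper right-invariant metric, the identity map is a coarse equivalence and the uniform Roe algebra is unchanged. With this metric, the corner of a finite-propagation matrix has propagation at most $\mathrm{prop}(T)$, and its entries are bounded. So $E(\mathbb{C}^p_u[|G|])\subseteq\mathbb{C}^p_u[|H|]$, and continuity gives $E(B^p_u|G|)\subseteq B^p_u|H|$.

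\emph{Fixed-point algebras.} Finally,
\[
B^p_u|G|^G=B^p_u|G|\cap CV_p(G)\quad\text{and}\quad B^p_u|H|^H=B^p_u|H|\cap CV_p(H),
\]
so the last assertion follows from combining the first and third.

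The only point needing care is the independence of $B^p_u|H|$ from the choice of proper right-invariant metric on $H$. This is the step I would expect to require a short remark: any two proper right-invariant metrics on a countable group are coarsely equivalent via the identity map, and finite propagation depends only on the coarse structure. Everything else is a direct entrywise computation.
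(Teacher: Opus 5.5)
Your proposal is correct and follows essentially the same route as the paper. Both use the intertwining computation $E(T)\rho_H(h)=1_HT\rho_G(h)i=1_H\rho_G(h)Ti=\rho_H(h)E(T)$, the evaluation of $E(\lambda_G(g))$, the fact that taking a corner does not increase propagation, and the identity $B^p_u|G|^G=B^p_u|G|\cap CV_p(G)$. Your remark about giving $H$ the restricted metric, or any coarsely equivalent proper right-invariant one, only makes explicit what the paper leaves implicit.
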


\begin{proof}
Suppose $T\in B(\ell^p(G))$ and $T\rho_G(g)=\rho_G(g)T$ for all $g\in G$.
Then 
\begin{align*}
E(T)\rho_H(h) &= 1_HTi\rho_H(h) = 1_HT\rho_G(h)i \\
&= 1_H\rho_G(h)Ti = \rho_H(h)1_HTi \\ &= \rho_H(h)E(T).
\end{align*}
Hence $E$ maps $CV_p(G)$ to $CV_p(H)$.
For $g\in G$, we have \[ E(\lambda^G(g)) = \begin{cases} \lambda^H(g) \;&\text{if}\;g\in H \\ 0 \;&\text{otherwise} \end{cases} \]
Hence $E$ maps $F^p_\lambda(G)$ to $F^p_\lambda(H)$.

If $T\in\mathbb{C}^p_u[|G|]$, then the propagation of $E(T)$ is at most that of $T$, so $E(T)\in\mathbb{C}^p_u[|H|]$. It follows that $E$ maps $B^p_u|G|$ to $B^p_u|H|$.
\end{proof}

\begin{prop} (cf. \cite[Theorem 3.14]{UZ}) \label{prop:subgp}
Let $G$ be a discrete group, and let $H$ be a subgroup of $G$. If $G$ has the $p$-ITAP, then $H$ has the $p$-ITAP.
\end{prop}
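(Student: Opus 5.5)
The plan is to transport the problem from $H$ to $G$ with the map $J$, use the $p$-ITAP for $G$ there, and return to $H$ with $E$. The two lemmas just proved provide all the needed properties of $J$ and $E$.

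Start with $T\in B^p_u|H|^H$; we must show $T\in F^p_\lambda(H)$. By the first lemma above, $J$ maps $B^p_u|H|^H$ into $B^p_u|G|^G$. Hence $J(T)\in B^p_u|G|^G$. Since $G$ has the $p$-ITAP, $B^p_u|G|^G=F^p_\lambda(G)$, so $J(T)\in F^p_\lambda(G)$.

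Next apply $E$. By the second lemma, $E$ maps $F^p_\lambda(G)$ into $F^p_\lambda(H)$. This rests on the formula $E(\lambda^G(g))=\lambda^H(g)$ for $g\in H$ and $0$ otherwise, together with linearity of $E$ and $\|E\|\le 1$, which pass from the group ring to its norm closure. Therefore $E(J(T))\in F^p_\lambda(H)$. The identity $E(J(T))=1_HJ(T)i=1_HiT=T$ then gives $T\in F^p_\lambda(H)$. So $B^p_u|H|^H\subseteq F^p_\lambda(H)$, and the reverse inclusion always holds.

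I do not expect a real obstacle, since the preceding lemmas already carry the substance. The one point to double-check is that $J$ preserves the uniform Roe algebra. This needs a right-invariant proper metric on $H$ that is compatible with the one on $G$, for example the restriction of the metric of $G$. The $p$-ITAP should not depend on the choice of proper right-invariant metric, because any two such metrics on a countable group are coarsely equivalent and so give the same $B^p_u$. With the restricted metric, $d(hr,h'r)=d(h,h')$ by right invariance, so the propagation bound for $J(T)$ holds. Finally, the computation $J(T)i=iT$ uses only the block-diagonal definition of $J$ with the identity coset indexed by $e$.
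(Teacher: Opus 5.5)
Your proof is correct and is the same argument as the paper's. You send $T\in B^p_u|H|^H$ via $J$ into $B^p_u|G|^G=F^p_\lambda(G)$ and recover $T=E(J(T))\in F^p_\lambda(H)$; your remark about giving $H$ the restricted metric, and about independence from the choice of proper right-invariant metric, makes explicit a point the paper leaves implicit.
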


\begin{proof}
Let $T\in B^p_u|H|^H$. Then $J(T)\in B^p_u|G|^G=F^p_\lambda(G)$. 
Now $E(J(T))=T$ so $T\in F^p_\lambda(H)$. Hence $H$ has the $p$-ITAP.
\end{proof}

\section{Fubini product and the slice map property}

In this section, we consider the Fubini product and slice map property of $p$-operator subspaces in the spatial tensor product of the ambient $p$-operator spaces. 
Here, we shall only consider $p$-operator spaces on $L^p$ space, so the spatial tensor product coincides with the $p$-operator space injective tensor product.
The proofs of the results in this section are essentially the same as those for the $p=2$ versions in \cite{UZ,KU} but we include them for self-containment.
Proposition \ref{prop:UZ220}, Proposition \ref{prop:UZ223}, and Theorem \ref{thm:OAPslice} will be useful to us in the next two sections.

\begin{defn}
Let $S\subseteq A$ and $T\subseteq B$ be $p$-operator spaces on $L^p$ space.
The Fubini product $F(S,T,A\otimes B)$ of $S$ and $T$ in $A\otimes B$ is defined as the set of all $x\in A\otimes B$ such that $(\phi\otimes id_B)(x)\in T$ for all $\phi\in A^*$ and $(id_A\otimes \psi)(x)\in S$ for all $\psi\in B^*$.
\end{defn}

Note that we always have $S\otimes T\subseteq F(S,T,A\otimes B)\subseteq A\otimes B$.

\begin{defn}
Let $S\subseteq A$ and $T\subseteq B$ be $p$-operator spaces on $L^p$ space.
The triple $(S,T,A\otimes B)$ has the slice map property if \[ F(S,T,A\otimes B)=S\otimes T. \]

We say that $A$ has the slice map property for $B$ if $(A,T,A\otimes B)$ has the slice map property for all $p$-operator spaces $T\subseteq B$.
\end{defn}

Write $\mathcal{F}(A,B)$ for the space of all bounded finite rank maps from $A$ to $B$.
Note that bounded finite rank maps between $p$-operator spaces are $p$-completely bounded. 

Let $A$ and $B$ be $p$-operator spaces, and let $x\in A\otimes B$. Define
\begin{align*}
\mathcal{F}_B(x) &= \overline{\{ (\Phi\otimes id_B)(x) : \Phi\in\mathcal{F}(A,A) \}} \subseteq A\otimes B, \\
\mathcal{T}_B(x) &= \overline{\{ (\phi\otimes id_B)(x) : \phi\in A^* \}} \subseteq B.
\end{align*}
Then we have $\mathcal{F}_B(x) = A\otimes \mathcal{T}_B(x)$. 

Moreover, for $p$-operator spaces $T\subseteq B$, we have 
\begin{align*}
F(A,T,A\otimes B) &= \{ x\in A\otimes B : \mathcal{T}_B(x)\subseteq T \} \\
&= \{ x\in A\otimes B : \mathcal{F}_B(x)\subseteq A\otimes T \}.
\end{align*}

The following compatibility of the Fubini product with intersections is clear from the definition.

\begin{lem} (cf. \cite[Lemma 2.11]{UZ}) \label{lem:UZ211}
Let $S\subseteq A$ and $T\subseteq B$ be $p$-operator spaces on $L^p$ space. Then
\[ F(S,T,A\otimes B) = F(A,T,A\otimes B) \cap F(S,B,A\otimes B). \]
More generally, for families of $p$-operator spaces $\{ S_\alpha\subseteq A\}$ and $\{ T_\beta \subseteq B\}$, we have
\[ F(\bigcap_\alpha S_\alpha,\bigcap_\beta T_\beta,A\otimes B) = \bigcap_{\alpha,\beta} F(S_\alpha,T_\beta,A\otimes B). \]
\end{lem}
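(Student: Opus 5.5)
The plan is to unwind the definition of the Fubini product. Membership in $F(S,T,A\otimes B)$ is the conjunction of two slice conditions, and one of the two families of slice maps can be dealt with trivially. For the first identity, take $x\in A\otimes B$. By definition, $x\in F(S,T,A\otimes B)$ means two things: $(\phi\otimes id_B)(x)\in T$ for every $\phi\in A^*$, and $(id_A\otimes\psi)(x)\in S$ for every $\psi\in B^*$.

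On the other hand, $x\in F(A,T,A\otimes B)$ requires $(\phi\otimes id_B)(x)\in T$ for all $\phi\in A^*$ and $(id_A\otimes\psi)(x)\in A$ for all $\psi\in B^*$. The second of these conditions holds automatically, because the slice map $id_A\otimes\psi$ takes $A\otimes B$ into $A$. Likewise, $x\in F(S,B,A\otimes B)$ reduces to the single condition $(id_A\otimes\psi)(x)\in S$ for all $\psi\in B^*$, since $(\phi\otimes id_B)(x)\in B$ always.

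So the intersection $F(A,T,A\otimes B)\cap F(S,B,A\otimes B)$ imposes exactly the two conditions defining $F(S,T,A\otimes B)$, and the first identity follows. The only point to check here is that the slice maps $\phi\otimes id_B$ and $id_A\otimes\psi$ are well defined and bounded on the spatial tensor product, landing in $B$ and $A$ respectively. I take this as implicit in the paper's definition of the Fubini product: the spatial and injective tensor products coincide here, and slicing by a bounded functional is $p$-completely bounded on $V\stackrel{\vee_p}{\otimes}W\subseteq CB_p(V',W)$.

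For the general statement, let $x\in A\otimes B$. Then $x\in F(\bigcap_\alpha S_\alpha,\bigcap_\beta T_\beta,A\otimes B)$ if and only if, for all $\phi\in A^*$ and $\psi\in B^*$, we have $(\phi\otimes id_B)(x)\in T_\beta$ for every $\beta$ and $(id_A\otimes\psi)(x)\in S_\alpha$ for every $\alpha$. This is in turn equivalent to $x\in F(S_\alpha,T_\beta,A\otimes B)$ for every pair $(\alpha,\beta)$. The one small step is the backward direction: if $x$ lies in $F(S_\alpha,T_\beta,A\otimes B)$ for all pairs, then fixing any $\beta$ gives membership of the slices in every $S_\alpha$, and fixing any $\alpha$ gives membership in every $T_\beta$. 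This uses only that the index sets are nonempty, and if one of them is empty the corresponding intersection is read as $A$ or $B$. I do not expect any genuine obstacle; the statement is purely set-theoretic once the slice maps are known to be defined.
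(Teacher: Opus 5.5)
Your proof is correct and is the same definitional unwinding the paper relies on; the paper gives no argument beyond calling the lemma ``clear from the definition''. One small caveat: your parenthetical about empty index families does not rescue that case. If exactly one family is empty, the set of pairs $(\alpha,\beta)$ is empty, so the right-hand side becomes all of $A\otimes B$ while the left-hand side generally does not. The general statement should therefore simply be read with both families nonempty.
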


The Fubini product is also compatible with kernels of $p$-completely bounded maps.

\begin{lem} (cf. \cite[Proposition 2.13]{UZ})
Let $A,B,C,D$ be $p$-operator spaces on $L^p$ space, and let $\sigma:A\to C$ and $\tau:B\to D$ be a $p$-completely bounded map. Then
\begin{align*} 
F(\ker\sigma,B,A\otimes B) &= \ker(\sigma\otimes id_B) \\
F(A,\ker\tau,A\otimes B) &= \ker(id_A\otimes\tau). 
\end{align*}
\end{lem}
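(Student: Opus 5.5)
By the symmetry between the two factors, I will prove only the first identity, $F(\ker\sigma,B,A\otimes B)=\ker(\sigma\otimes id_B)$. The second follows by the same argument with the roles of $A$ and $B$ exchanged. The key tool is the interchange of slices with $p$-completely bounded maps:
\[
(\phi\otimes id_B)\circ(\sigma\otimes id_B)=(\phi\circ\sigma)\otimes id_B
\quad\text{and}\quad
(id_C\otimes\psi)\circ(\sigma\otimes id_B)=\sigma\circ(id_A\otimes\psi).
\]
These hold for $\phi\in C^*$, $\psi\in B^*$. Here $\sigma\otimes id_B:A\otimes B\to C\otimes D'$, where $D'$ is whatever space $B$ sits in, is bounded because $\sigma$ is $p$-completely bounded and the spatial tensor product agrees with the $p$-operator space injective tensor product. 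Both identities are trivial on elementary tensors $a\otimes b$. Since all maps involved are bounded on the completed spatial tensor products, they extend by continuity.

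First I note that the condition "$(\phi\otimes id_B)(x)\in B$ for all $\phi$" is automatic. Hence $F(\ker\sigma,B,A\otimes B)$ is simply the set of $x$ such that $(id_A\otimes\psi)(x)\in\ker\sigma$ for all $\psi\in B^*$.

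For the inclusion $\ker(\sigma\otimes id_B)\subseteq F(\ker\sigma,B,A\otimes B)$, take $x$ with $(\sigma\otimes id_B)(x)=0$. Then $\sigma((id_A\otimes\psi)(x))=(id_C\otimes\psi)((\sigma\otimes id_B)(x))=0$ for every $\psi\in B^*$.

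For the reverse inclusion, take $x$ with $\sigma((id_A\otimes\psi)(x))=0$ for all $\psi$. Then $(id_C\otimes\psi)(y)=0$ for all $\psi\in B^*$, where $y=(\sigma\otimes id_B)(x)\in C\otimes B$. It remains to show that $y=0$, that is, that slice maps separate points of $C\otimes B$. I would deduce this from the description of $C\stackrel{\vee_p}{\otimes}B$ as a subspace of $CB_p(C',B)$. An element $y$ corresponds to the map $f\mapsto (f\otimes id_B)(y)$, so $(f\otimes id_B)(y)=0$ for all $f\in C'$ forces $y=0$. Then for every $\psi$ we get $\psi((f\otimes id_B)(y))=f((id_C\otimes\psi)(y))=0$ by the commutation of the two slices, which again is checked on elementary tensors and extended by continuity. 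Hence $(f\otimes id_B)(y)=0$ for all $f$ by Hahn–Banach, and so $y=0$. Here $C'=C^*$ as sets, since bounded functionals are automatically $p$-completely bounded with the same norm.

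The only point requiring care is this injectivity of the slice maps. It depends on the injective tensor product embedding into $CB_p(C',B)$, together with its coincidence with the spatial tensor product for $p$-operator spaces on $L^p$ space, as recalled in the preliminaries. The rest is routine.
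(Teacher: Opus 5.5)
Your proof is correct and follows the paper's route. The paper proves the second identity (and calls the first analogous) using exactly the commutation $\tau\circ(\phi\otimes id_B)=(\phi\otimes id_D)\circ(id_A\otimes\tau)$, and then simply says the equality ``follows.'' What you add is an explicit justification of the step the paper leaves implicit, namely that slice maps separate points of the tensor product. You obtain this correctly from the isometric embedding $C\stackrel{\vee_p}{\otimes}B\hookrightarrow CB_p(C',B)$ together with the identity $\psi\circ(f\otimes id_B)=f\circ(id_C\otimes\psi)$. One small slip: the codomain of $\sigma\otimes id_B$ is just $C\otimes B$, so the ``$D'$'' in your first paragraph should be dropped.
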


\begin{proof}
Let $x\in A\otimes B$ and $\phi\in A^*$. Then $\tau((\phi\otimes id_B)(x)) = (\phi\otimes id_D)((id_A\otimes\tau)(x))$, and the second equality follows.
The first equality is proved analogously.
\end{proof}

Combining the two lemmas, one obtains the following:

\begin{prop} (cf. \cite[Theorem 2.15]{UZ}) \label{prop:UZ215}
Let $A,B,C,D$ be $p$-operator spaces on $L^p$ space, and let $\{\sigma_\alpha:A\to C\}$ and $\{\tau_\beta:B\to D\}$ be families of $p$-completely bounded maps. Then
\[ F(\bigcap_\alpha \ker\sigma_\alpha,\bigcap_\beta \ker\tau_\beta,A\otimes B) = (\bigcap_\alpha \ker(\sigma_\alpha\otimes id_B)) \cap (\bigcap_\beta \ker(id_A\otimes\tau_\beta)). \]
\end{prop}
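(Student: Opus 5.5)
The plan is to combine Lemma \ref{lem:UZ211} with the preceding lemma on kernels, applied one map at a time. First I will record the identities $\bigcap_\alpha \ker\sigma_\alpha$ and $\bigcap_\beta\ker\tau_\beta$; these are closed subspaces of $A$ and $B$ respectively, hence $p$-operator spaces on $L^p$ space, so the Fubini product on the left-hand side makes sense. By the general form of Lemma \ref{lem:UZ211}, applied to the families $\{\ker\sigma_\alpha\}_\alpha\cup\{A\}$ and $\{\ker\tau_\beta\}_\beta\cup\{B\}$ (adjoining the whole spaces is harmless, since intersecting with $A$ or $B$ changes nothing), I will write
\[ F\Bigl(\bigcap_\alpha \ker\sigma_\alpha,\bigcap_\beta \ker\tau_\beta,A\otimes B\Bigr) = \bigcap_{\alpha} F(\ker\sigma_\alpha,B,A\otimes B)\;\cap\;\bigcap_\beta F(A,\ker\tau_\beta,A\otimes B). \]
Alternatively, one can use the first form of Lemma \ref{lem:UZ211} to split the Fubini product into $F(A,\bigcap_\beta\ker\tau_\beta,A\otimes B)\cap F(\bigcap_\alpha\ker\sigma_\alpha,B,A\otimes B)$, and then the general form with one family trivial, to distribute each piece over the intersection.

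Next I will invoke the kernel lemma (the analogue of \cite[Proposition 2.13]{UZ}) for each fixed $\alpha$ and $\beta$. This gives $F(\ker\sigma_\alpha,B,A\otimes B)=\ker(\sigma_\alpha\otimes id_B)$ and $F(A,\ker\tau_\beta,A\otimes B)=\ker(id_A\otimes\tau_\beta)$. Substituting these into the displayed identity yields exactly the claimed equality.

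The only point needing care is the well-definedness of $\sigma_\alpha\otimes id_B$ and $id_A\otimes\tau_\beta$ on the spatial tensor product. I expect this to be the main (mild) obstacle. It is already implicit in the kernel lemma, since $p$-completely bounded maps tensor with the identity boundedly on injective (equals spatial) tensor products of $p$-operator spaces on $L^p$ space. Accordingly, I will simply cite that lemma rather than reprove it.
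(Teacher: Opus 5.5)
Your proposal is correct and follows the paper's route: the paper obtains this proposition by combining Lemma~\ref{lem:UZ211} (compatibility of Fubini products with intersections) with the preceding kernel lemma, exactly as you do. Adjoining $A$ and $B$ to the two families is a harmless extra precaution; it makes the reduction to the terms $F(\ker\sigma_\alpha,B,A\otimes B)$ and $F(A,\ker\tau_\beta,A\otimes B)$ explicit, even if one of the index sets is empty.
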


Next, consider a $p$-operator space $A$ equipped with an action of a discrete group $G$ by $p$-completely bounded maps. We call $A$ a $G$-$p$-operator space, and we write $A^G$ for the set of elements in $A$ that are fixed under the $G$-action, i.e., $A^G=\{a\in A:ga=a\;\text{for all}\;g\in G\}$.

\begin{prop} (cf. \cite[Proposition 2.19 and Corollary 2.20]{UZ}) \label{prop:UZ220}
Let $A$ be a $G$-$p$-operator space on $L^p$ space, and let $B$ be a $H$-$p$-operator space on $L^p$ space.
Then $A\otimes B$ is a $(G\times H)$-$p$-operator space on $L^p$ space, and
\[ F(A^G,B^H,A\otimes B) = (A\otimes B)^{G\times H}. \]
In particular, $A^G\otimes B^H = (A\otimes B)^{G\times H}$ if and only if $(A^G,B^H,A\otimes B)$ has the slice map property.
\end{prop}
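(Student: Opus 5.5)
The identity will follow from Proposition \ref{prop:UZ215} once each fixed-point space is written as an intersection of kernels. For $g\in G$, let $\alpha_g$ be the $p$-completely bounded map giving the action of $g$ on $A$, and for $h\in H$ let $\beta_h$ be the corresponding map on $B$. Then
\[ A^G=\bigcap_{g\in G}\ker(\alpha_g-id_A), \qquad B^H=\bigcap_{h\in H}\ker(\beta_h-id_B), \]
and each $\alpha_g-id_A$ and $\beta_h-id_B$ is $p$-completely bounded, so these are closed $p$-operator subspaces.

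First I check that $A\otimes B$ is a $(G\times H)$-$p$-operator space on $L^p$ space. Since $A$ and $B$ are on $L^p$ space, the spatial tensor product agrees with the $p$-operator space injective tensor product. That product is functorial for $p$-completely bounded maps, with $\|\sigma\otimes\tau\|_{pcb}\le\|\sigma\|_{pcb}\|\tau\|_{pcb}$ (cf.\ \cite{ALR}). Hence $(g,h)\mapsto \alpha_g\otimes\beta_h$ is a well-defined action of $G\times H$ on $A\otimes B$ by $p$-completely bounded maps. It is an action because it is one on the algebraic tensor product and extends by continuity. Also $A\otimes B\subseteq B(L^p(\mu)\otimes_p L^p(\nu))\cong B(L^p(\mu\times\nu))$, so it lies on $L^p$ space.

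Next, Proposition \ref{prop:UZ215}, applied with the families $\{\alpha_g-id_A\}_{g\in G}$ and $\{\beta_h-id_B\}_{h\in H}$, gives
\[ F(A^G,B^H,A\otimes B)=\Bigl(\bigcap_{g\in G}\ker\bigl((\alpha_g-id_A)\otimes id_B\bigr)\Bigr)\cap\Bigl(\bigcap_{h\in H}\ker\bigl(id_A\otimes(\beta_h-id_B)\bigr)\Bigr). \]
By linearity, $(\alpha_g-id_A)\otimes id_B=\alpha_g\otimes id_B-id$, so the right-hand side is the set of $x$ fixed by every $(g,e)$ and every $(e,h)$. These elements generate $G\times H$, and $(g,h)=(g,e)(e,h)$ acts by $(\alpha_g\otimes id)(id\otimes\beta_h)$. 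So $x$ is fixed by all of $G\times H$ exactly when it is fixed by these generators, which gives $F(A^G,B^H,A\otimes B)=(A\otimes B)^{G\times H}$.

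The ``in particular'' clause follows directly from the definition of the slice map property: $(A^G,B^H,A\otimes B)$ has the slice map property exactly when $F(A^G,B^H,A\otimes B)=A^G\otimes B^H$, and the left side has just been identified with $(A\otimes B)^{G\times H}$.

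The only step I expect to need care is the first one: that $\alpha_g\otimes\beta_h$ extends boundedly to the spatial tensor product and still satisfies $(\alpha_g\otimes id)(id\otimes\beta_h)=\alpha_g\otimes\beta_h$. Both properties come from the cross-norm and functoriality properties of the injective tensor product together with density of the algebraic tensor product. Everything after that is formal.
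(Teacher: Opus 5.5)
Your proposal is correct and follows essentially the same route as the paper: you write $A^G$ and $B^H$ as intersections of kernels of the $p$-completely bounded maps $\alpha_g-id_A$ and $\beta_h-id_B$, apply Proposition \ref{prop:UZ215}, and use that $G\times H$ is generated by $G\times\{e\}$ and $\{e\}\times H$. Your explicit check that $(g,h)\mapsto\alpha_g\otimes\beta_h$ is an action of $G\times H$ on the spatial tensor product by $p$-completely bounded maps fills in a detail the paper leaves implicit, and you handle it correctly.
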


\begin{proof}
For $g\in G$, let $\sigma_g=id_A-g:A\to A$. Then $\sigma_g$ is $p$-completely bounded, and $\sigma_g\otimes id_B = id_{A\otimes B}-(g\times 1_H):A\otimes B\to A\otimes B$.
Moreover, 
\begin{align*}
\bigcap_{g\in G} \ker\sigma_g &= A^G, \\
\bigcap_{g\in G} \ker(\sigma_g\otimes id_B) &= (A\otimes B)^{G\times\{1_H\}}.
\end{align*}
Similarly, for $h\in H$, let $\tau_h=id_B-h:B\to B$. Then $\tau_h$ is $p$-completely bounded, and
\begin{align*}
\bigcap_{h\in H} \ker\tau_h &= B^H, \\
\bigcap_{h\in H} \ker(id_A\otimes\tau_h) &= (A\otimes B)^{\{1_G\}\times H}.
\end{align*}
Since $(A\otimes B)^{G\times H} = (A\otimes B)^{G\times\{1_H\}} \cap (A\otimes B)^{\{1_G\}\times H}$, the equality $F(A^G,B^H,A\otimes B) = (A\otimes B)^{G\times H}$ follows from Proposition \ref{prop:UZ215}.

The final statement then follows by definition.
\end{proof}

\begin{lem} (cf. \cite[Lemma 2.6]{KU})
Let $S\subseteq A$ and $T\subseteq B$ be $p$-operator spaces on $L^p$ space. If $(A,T,A\otimes B)$ has the slice map property, then $F(S,T,A\otimes T) = F(S,T,A\otimes B)$.
\end{lem}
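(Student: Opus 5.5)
The plan is to prove the two inclusions separately, regarding $A\otimes T$ as a closed subspace of $A\otimes B$. By the remarks in Section 2, since all spaces are $p$-operator spaces on $L^p$ space, the inclusion $A\otimes T\hookrightarrow A\otimes B$ is a ($p$-complete) isometry, because spatial and injective tensor products agree and the injective tensor product is injective. With this identification both Fubini products are subsets of $A\otimes B$, and the claim is an equality of subsets there.

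\emph{Inclusion $F(S,T,A\otimes T)\subseteq F(S,T,A\otimes B)$.} Let $x\in F(S,T,A\otimes T)$. Any $\phi\in A^*$ gives $(\phi\otimes id_B)(x)=(\phi\otimes id_T)(x)\in T$, since the slice maps agree on $A\otimes T$ (check on elementary tensors and extend by continuity). For $\psi\in B^*$, restrict it to $\psi|_T\in T^*$. Then $(id_A\otimes\psi)(x)=(id_A\otimes\psi|_T)(x)\in S$, again by agreement on elementary tensors and continuity. Hence $x\in F(S,T,A\otimes B)$.

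\emph{Inclusion $F(S,T,A\otimes B)\subseteq F(S,T,A\otimes T)$.} Let $x\in F(S,T,A\otimes B)$. In particular every slice $(\phi\otimes id_B)(x)$ lies in $T$, so $x\in F(A,T,A\otimes B)$. The slice map property hypothesis gives $F(A,T,A\otimes B)=A\otimes T$, so $x\in A\otimes T$. It remains to check the slice conditions inside $A\otimes T$.
\begin{enumerate}
\item For $\phi\in A^*$, $(\phi\otimes id_T)(x)=(\phi\otimes id_B)(x)\in T$.
\item For $\psi'\in T^*$, extend it by Hahn--Banach to $\psi\in B^*$ with $\psi|_T=\psi'$. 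Then $(id_A\otimes\psi')(x)=(id_A\otimes\psi)(x)\in S$.
\end{enumerate}
Thus $x\in F(S,T,A\otimes T)$.

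The only point needing care is the compatibility of slice maps with the inclusion $A\otimes T\subseteq A\otimes B$. This should follow from the fact that bounded functionals are $p$-completely bounded with the same norm, so the slice maps $\phi\otimes id$ and $id\otimes\psi$ are bounded on the spatial tensor products. Two bounded maps that agree on the dense algebraic tensor product $A\odot T$ then agree on $A\otimes T$. I would verify this boundedness with Le Merdy's embedding, or by viewing $A\otimes B\subseteq CB_p(A',B)$, in which case $(\phi\otimes id_B)(x)=x(\phi)$. Everything else is formal.
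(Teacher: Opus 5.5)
Your proof is correct and is essentially the paper's argument. The paper writes it as the chain $F(S,T,A\otimes T)=(A\otimes T)\cap F(S,B,A\otimes B)=F(A,T,A\otimes B)\cap F(S,B,A\otimes B)=F(S,T,A\otimes B)$, using restriction and Hahn--Banach extension of functionals between $T$ and $B$, the hypothesis $F(A,T,A\otimes B)=A\otimes T$, and Lemma~\ref{lem:UZ211}; you unpack exactly these ingredients into two elementwise inclusions, and your remarks on boundedness of the slice maps fill in details the paper leaves implicit (note that $A\otimes T\subseteq A\otimes B$ is immediate for spatial tensor products, both being closures in the same $B(L^p(\mu)\otimes_p L^p(\nu))$).
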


\begin{proof}
Since each element of $T^*$ extends to an element of $B^*$, we have
\begin{align*}
F(S,T,A\otimes T) &= (A\otimes T) \cap F(S,B,A\otimes B) \\
&= F(A,T,A\otimes B) \cap F(S,B,A\otimes B) \\
&= F(S,T,A\otimes B),
\end{align*}
the last equality coming from Lemma \ref{lem:UZ211}.
\end{proof}

\begin{prop} (cf. \cite[Proposition 2.23]{UZ}) \label{prop:UZ223}
Let $S\subseteq A$ and $T\subseteq B$ be $p$-operator spaces on $L^p$ space.
If $(A,T,A\otimes B)$ and $(S,T,A\otimes T)$ have the slice map property, then so do $(S,T,S\otimes B)$ and $(S,T,A\otimes B)$.

Conversely, if $(S,T,A\otimes B)$ has the slice map property, then so do $(S,T,A\otimes T)$ and $(S,T,S\otimes B)$.
\end{prop}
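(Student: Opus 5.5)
The plan is to prove both directions by comparing Fubini products computed in different ambient tensor products, using Lemma \ref{lem:UZ211} (intersections), the preceding lemma (cf. \cite[Lemma 2.6]{KU}), and the fact that slicing by a functional on a subspace can be realized by slicing with a Hahn--Banach extension. The basic observation I will use repeatedly is the following. If $S\subseteq A$ and $T\subseteq B$, then an element $x\in S\otimes B$ (closure inside $A\otimes B$) satisfies $(\phi\otimes id_B)(x)=(\phi|_S\otimes id_B)(x)$ for $\phi\in A^*$. Every functional on $S$ extends to $A$, and the same holds with the roles of the factors exchanged. Hence
\[ F(S,T,S\otimes B)=(S\otimes B)\cap F(S,T,A\otimes B),\qquad F(S,T,A\otimes T)=(A\otimes T)\cap F(S,T,A\otimes B). \]

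For the first direction, assume $(A,T,A\otimes B)$ and $(S,T,A\otimes T)$ have the slice map property. By the preceding lemma, the first hypothesis gives $F(S,T,A\otimes T)=F(S,T,A\otimes B)$. The second hypothesis says the left side equals $S\otimes T$, so $F(S,T,A\otimes B)=S\otimes T$, which is the slice map property for $(S,T,A\otimes B)$. For $(S,T,S\otimes B)$ I will use the displayed identity: $F(S,T,S\otimes B)=(S\otimes B)\cap F(S,T,A\otimes B)=(S\otimes B)\cap(S\otimes T)=S\otimes T$, since $S\otimes T\subseteq S\otimes B$.

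For the converse, assume $F(S,T,A\otimes B)=S\otimes T$. The displayed identities immediately give $F(S,T,A\otimes T)=(A\otimes T)\cap (S\otimes T)=S\otimes T$, and likewise $F(S,T,S\otimes B)=S\otimes T$. So both slice map properties hold.

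The only step needing care is the justification of the displayed identities, namely that the spatial tensor products nest isometrically, with $S\otimes T\subseteq S\otimes B\subseteq A\otimes B$ as closed subspaces. For $p$-operator spaces on $L^p$ space this follows from the injectivity of the $p$-operator space injective tensor product (\cite[Proposition 2.3.6]{Lee}, quoted earlier), together with its coincidence with the spatial tensor product. A second point is that slice maps by $\phi$ and by $\phi|_S$ agree on $S\otimes B$. This holds on elementary tensors and extends by continuity, since slice maps are bounded on the spatial (injective) tensor product. With these in hand the argument is purely formal, as in \cite{UZ}.
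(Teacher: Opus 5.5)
Your proposal is correct and follows essentially the same route as the paper. The preceding lemma gives $F(S,T,A\otimes T)=F(S,T,A\otimes B)$, and the rest is a comparison of Fubini products along the inclusions $S\otimes T\subseteq F(S,T,S\otimes B)\subseteq F(S,T,A\otimes B)$ and $S\otimes T\subseteq F(S,T,A\otimes T)\subseteq F(S,T,A\otimes B)$. Your intersection identities are slightly stronger than the paper needs: it uses only these inclusions, which require restricting functionals rather than extending them by Hahn--Banach. The identities are nonetheless valid and your argument goes through.
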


\begin{proof}
This follows from the previous lemma and the following commutative diagram of inclusions:
\[
\begin{tikzcd}
F(S,T,A\otimes T) \arrow[hookrightarrow]{r}{} &
F(S,T,A\otimes B)  \\
S\otimes T \arrow[hookrightarrow]{r}{} \arrow[hookrightarrow]{u}{} &
F(S,T,S\otimes B) \arrow[hookrightarrow]{u}{}
\end{tikzcd}
\]
\end{proof}

\begin{lem} \label{lem:slice} (cf. \cite[Lemma 2.25]{UZ} and \cite[Theorem 5.4]{Kraus})
Let $A$ and $B$ be $p$-operator spaces on $L^p$ space.
Then $A$ has the slice map property for $B$ if and only if $x\in\mathcal{F}_B(x)$ for all $x\in A\otimes B$.
\end{lem}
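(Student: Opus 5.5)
We prove the two implications separately, using the descriptions of $F(A,T,A\otimes B)$ recorded just before the lemma, namely
\[ F(A,T,A\otimes B)=\{x\in A\otimes B:\mathcal{T}_B(x)\subseteq T\}=\{x\in A\otimes B:\mathcal{F}_B(x)\subseteq A\otimes T\}, \]
together with the identity $\mathcal{F}_B(x)=A\otimes\mathcal{T}_B(x)$.

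Suppose first that $A$ has the slice map property for $B$, and let $x\in A\otimes B$. Put $T=\mathcal{T}_B(x)$. This is a closed subspace of $B$ and hence a $p$-operator space on $L^p$ space. By construction $\mathcal{T}_B(x)\subseteq T$, so $x\in F(A,T,A\otimes B)$. The slice map property then gives $x\in A\otimes T=A\otimes\mathcal{T}_B(x)=\mathcal{F}_B(x)$. This direction is essentially formal. The only point to check is the identity $\mathcal{F}_B(x)=A\otimes\mathcal{T}_B(x)$, which the paper has already stated.

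Conversely, assume $x\in\mathcal{F}_B(x)$ for every $x\in A\otimes B$. Let $T\subseteq B$ be a $p$-operator space and take $x\in F(A,T,A\otimes B)$. Then $\mathcal{F}_B(x)\subseteq A\otimes T$ by the second description above, and so $x\in\mathcal{F}_B(x)\subseteq A\otimes T$. The reverse inclusion $A\otimes T\subseteq F(A,T,A\otimes B)$ always holds. Hence $(A,T,A\otimes B)$ has the slice map property for every such $T$.

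The main obstacle is justifying the identity $\mathcal{F}_B(x)=A\otimes\mathcal{T}_B(x)$, which both directions rely on. I would verify the two inclusions separately.

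For $\subseteq$, write a finite-rank map as $\Phi=\sum_i\phi_i(\cdot)a_i$ with $\phi_i\in A^*$ and $a_i\in A$. Then $(\Phi\otimes id_B)(x)=\sum_i a_i\otimes(\phi_i\otimes id_B)(x)$, which lies in $A\odot\mathcal{T}_B(x)$.

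For $\supseteq$, note that elementary tensors $a\otimes(\phi\otimes id_B)(x)$ equal $(\Phi\otimes id_B)(x)$ for the rank-one map $\Phi=\phi(\cdot)a$. Finite sums of such elements are again of this form, because $\mathcal{F}(A,A)$ is a vector space. It then remains to show these sums are dense in $A\otimes\mathcal{T}_B(x)$. Since the spatial norm is a cross norm, an elementary tensor $a\otimes b$ with $b$ a limit of slices of $x$ is approximated by elementary tensors $a\otimes(\phi\otimes id_B)(x)$. Passing to closures completes the argument.
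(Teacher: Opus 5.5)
Your proof is correct and is essentially the paper's argument. For ($\Rightarrow$) you apply the slice map property with $T=\mathcal{T}_B(x)$ and use $A\otimes\mathcal{T}_B(x)=\mathcal{F}_B(x)$, and for ($\Leftarrow$) you combine $x\in\mathcal{F}_B(x)$ with $\mathcal{F}_B(x)\subseteq A\otimes T$. The paper simply asserts the identity $\mathcal{F}_B(x)=A\otimes\mathcal{T}_B(x)$; your verification of it is sound, using rank-one decompositions, extension of the slice formula from elementary tensors by continuity, and $\|a\otimes b\|\le\|a\|\|b\|$ for density.
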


\begin{proof}
($\Rightarrow$)
Let $x\in A\otimes B$. Then $x\in F(A,\mathcal{T}_B(x),A\otimes B) = A\otimes \mathcal{T}_B(x) = \mathcal{F}_B(x)$.

($\Leftarrow$)
Let $T\subseteq B$ be a $p$-operator subspace, and let $x\in F(A,T,A\otimes B)$. Then $\mathcal{F}_B(x)\subseteq A\otimes T$. Hence $F(A,T,A\otimes B)\subseteq A\otimes T$.
\end{proof}

\begin{defn}
A $p$-operator space $B$ on $L^p$ space is matrix stable if,
for every $n\in\mathbb N$, there is a $p$-completely bounded
isomorphism
\[
B\longrightarrow B\otimes M_n^p
\]
whose inverse is also $p$-completely bounded.
\end{defn}

In particular, $K(\ell^p)$ is matrix stable. Indeed, an invertible
isometry
\[
\ell^p\cong \ell_n^p\otimes_p\ell^p
\]
induces a $p$-completely isometric isomorphism
\[
K(\ell^p)\cong K(\ell^p)\otimes M_n^p.
\]

\begin{defn} (cf. \cite[Definition 2.27]{UZ})
Let $A$ and $B$ be $p$-operator spaces.
We say that $A$ has the $p$-operator approximation property for $B$ if there is a net $\Phi_\alpha\in\mathcal{F}(A,A)$ of bounded finite rank maps such that $\Phi_\alpha\otimes id_B$ converges to $id_A\otimes id_B$ in the point-norm topology.
\end{defn}

Taking $B=K(\ell^p)$, we get the $p$-operator space approximation property for $A$ defined in Section 2, while taking $B=B(\ell^p)$, we get the strong $p$-operator space approximation property. 

\begin{thm} \label{thm:OAPslice} (cf. \cite[Theorem 2.28]{UZ} and \cite[Theorem 5.4]{Kraus})
Let $A$ and $B$ be $p$-operator spaces on $L^p$ space. If $A$ has the $p$-operator approximation property for $B$, then $A$ has the slice map property for $B$.
If $B$ is matrix stable, then the converse also holds.
\end{thm}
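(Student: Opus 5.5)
For the forward direction I will go through Lemma \ref{lem:slice}: it suffices to show that $x\in\mathcal{F}_B(x)$ for every $x\in A\otimes B$. Let $\Phi_\alpha\in\mathcal{F}(A,A)$ be a net with $(\Phi_\alpha\otimes id_B)(y)\to y$ in norm for all $y\in A\otimes B$. Taking $y=x$, each $(\Phi_\alpha\otimes id_B)(x)$ lies in $\mathcal{F}_B(x)$ by definition, and $\mathcal{F}_B(x)$ is norm closed, so the limit $x$ lies in $\mathcal{F}_B(x)$. One routine point must be checked: for $\Phi\in\mathcal{F}(A,A)$ the map $\Phi\otimes id_B$ is well defined and bounded on the spatial tensor product. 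To see this, write $\Phi=\sum_i\phi_i(\cdot)a_i$ with $\phi_i\in A^*$. Then $\Phi\otimes id_B=\sum_i a_i\otimes(\phi_i\otimes id_B)$. The slice maps $\phi\otimes id_B$ are bounded because functionals are automatically $p$-completely bounded with $\|\phi\|_{pcb}=\|\phi\|$, and because the spatial and injective tensor products coincide here. This is the only place where the hypothesis ``on $L^p$ space'' is used.

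For the converse, assume that $B$ is matrix stable and that $A$ has the slice map property for $B$; I will produce the approximating net. I will argue by Hahn--Banach, as for the point-norm/point-weak closure discussion in Section 2. Let $\mathcal{S}=\{\Phi\otimes id_B:\Phi\in\mathcal{F}(A,A)\}\subseteq B(A\otimes B)$, which is a linear subspace. Its point-norm closure equals its point-weak closure, because point-weak continuous functionals on $B(A\otimes B)$ are finite sums $T\mapsto\sum_j f_j(Tx_j)$. So it suffices to show the following: whenever $x_1,\dots,x_n\in A\otimes B$ and $f_1,\dots,f_n\in(A\otimes B)^*$ satisfy $\sum_j f_j((\Phi\otimes id_B)(x_j))=0$ for all $\Phi$, we also have $\sum_j f_j(x_j)=0$.

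This is where matrix stability enters. I will assemble $x_1,\dots,x_n$ into the diagonal element $X=\mathrm{diag}(x_1,\dots,x_n)\in (A\otimes B)\otimes M_n^p\cong A\otimes(B\otimes M_n^p)$. Using the $p$-completely bounded isomorphism $\theta:B\otimes M_n^p\to B$, I transport $X$ to $x=(id_A\otimes\theta)(X)\in A\otimes B$. Similarly, I define a functional $f$ on $A\otimes(B\otimes M_n^p)$ by $f([y_{ij}])=\sum_j f_j(y_{jj})$ and transport it to $f\circ(id_A\otimes\theta^{-1})$. Since $id_A\otimes\theta$ commutes with every $\Phi\otimes id$, the hypothesis says exactly that this functional annihilates $\mathcal{F}_B(x)$. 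The slice map property together with Lemma \ref{lem:slice} gives $x\in\mathcal{F}_B(x)$, hence the functional kills $x$, i.e.\ $\sum_j f_j(x_j)=0$.

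The main obstacle is making this transport rigorous. One needs $(id_A\otimes\theta)$ to be a bounded isomorphism of spatial tensor products and to intertwine the finite-rank amplifications. One also needs the identification $(A\otimes B)\otimes M_n^p\cong A\otimes(B\otimes M_n^p)$ as $M_n(A\otimes B)$ with diagonal norms controlled by $\mathcal{D}_\infty$. I would verify these using the injectivity/associativity of the $p$-operator space injective tensor product from \cite{ALR,Lee}.
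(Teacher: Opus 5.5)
Your proposal is correct and follows the paper's argument. The forward direction goes through Lemma \ref{lem:slice}, and the converse places $x_1,\dots,x_n$ diagonally in $A\otimes(B\otimes M_n^p)$, transports this element to $A\otimes B$ through the matrix-stability isomorphism, and applies Lemma \ref{lem:slice} there. The only difference is your Hahn--Banach wrapper, which is unnecessary: since $\mathcal{F}_B(x)$ is a norm closure, $x\in\mathcal{F}_B(x)$ already gives a single $\Phi\in\mathcal{F}(A,A)$ with $\|(\Phi\otimes id_B)(x)-x\|$ small, and applying $id_A\otimes\theta^{-1}$ turns this into $\|(\Phi\otimes id_B)(x_k)-x_k\|<\varepsilon$ for all $k$ at once, which is exactly how the paper concludes.
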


\begin{proof}
Let $x\in A\otimes B$. If $A$ has the $p$-operator approximation property for $B$, then the existence of the net $\Phi_\alpha\in\mathcal{F}(A,A)$ implies that $x\in\mathcal{F}_B(x)$. Hence, by Lemma \ref{lem:slice}, $A$ has the slice map property for $B$.

Now suppose $B$ is matrix stable, and that $A$ has the slice map property for $B$.
Let $x_1,\ldots,x_n\in A\otimes B$, and let $\varepsilon>0$.
Let $C=B\otimes M_n^p$, and choose a $p$-completely bounded isomorphism $\pi:B\to C$.
Let $x=x_1\oplus\cdots\oplus x_n\in A\otimes B\otimes M_n^p=A\otimes C$. 
Put
\[
y=(id_A\otimes\pi^{-1})(x)\in A\otimes B.
\]
Then $x=(id_A\otimes\pi)(y)$.
By Lemma \ref{lem:slice}, $y\in\mathcal{F}_B(y)$, so there is $\Phi\in\mathcal{F}(A,A)$ such that $\| (\Phi\otimes id_B)(y)-y \| < \varepsilon/(\|\pi\|_{pcb}+1)$. Then
\begin{align*}
\| (\Phi\otimes id_C)(x)-x \| &= \| (\Phi\otimes id_C)((id_A\otimes\pi)(y)) - (id_A\otimes\pi)(y) \| \\
&= \| (id_A\otimes\pi)((\Phi\otimes id_B)(y)-y) \| \\
&< \varepsilon.
\end{align*}
It follows that $\| (\Phi\otimes id_B)(x_k)-x_k \|<\varepsilon$ for each $k\in\{1,\ldots,n\}$.
\end{proof}

\section{Exact groups}


In this section, we give a characterization of exactness for discrete groups in terms of their $\ell^p$ uniform Roe algebras with coefficients in $p$-operator spaces.
We also show that for exact discrete groups, the $p$-operator ITAP is equivalent to the $p$-AP.
We shall use the fact that exactness is equivalent to property A for discrete groups \cite{Ozawa,GK}.

The proof of the following lemma can be extracted from the proof of Theorem \ref{Apnuc} or \cite[Lemma 6.18]{DC}.

\begin{lem} \label{lem:Schur}
Let $X$ be a metric space with bounded geometry. Fix $p\in(1,\infty)$,
and let $q\in(1,\infty)$ satisfy
\[
\frac{1}{p}+\frac{1}{q}=1.
\]
Then $X$ has property A if and only if, for every $R,\delta>0$, there
exist $L>0$ and nonnegative unit vectors
\[
\xi_x\in \ell^p(X), \qquad x\in X,
\]
such that
\[
\operatorname{supp}(\xi_x)\subseteq B(x,L)
\]
for all $x\in X$, and the kernel
\[
k(x,y)
=
\big\langle \xi_x^{p/q},\xi_y\big\rangle,
\qquad
\xi_x^{p/q}
=
\big(\xi_x(z)^{p/q}\big)_{z\in X}\in\ell^q(X),
\]
satisfies
\[
1-\delta<k(x,y)\leq 1
\]
whenever $d(x,y)\leq R$.

For every such kernel, $k\colon X\times X\to[0,1]$ satisfies
$k(x,x)=1$ for all $x\in X$ and has propagation at most $2L$.
Moreover, the Schur multiplier
\[
M_k\colon B_u^p(X)\longrightarrow B_u^p(X),
\qquad
M_k\big([a(x,y)]\big)
=
[k(x,y)a(x,y)],
\]
is $p$-completely contractive.

If $X$ has property A, then, more generally, for every SQ$_p$ space
$E$, every $p$-operator space $\mathcal{S}\subseteq B(E)$, every finite
subset
\[
\mathcal{F}\subseteq B_u^p(X,\mathcal{S}),
\]
and every $\varepsilon>0$, there exists a kernel $k$ of the above form
such that, with $M_k$ acting coefficientwise,
\[
\|M_k(a)-a\|<\varepsilon
\]
for all $a\in\mathcal{F}$. The coefficientwise Schur multiplier $M_k$
is contractive on $B_u^p(X,\mathcal{S})$.
\end{lem}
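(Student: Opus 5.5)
For the forward direction I will reuse the construction in the proof of Theorem \ref{Apnuc}. Given $R,\delta>0$, property A (in the Higson--Roe form stated before Theorem \ref{Apnuc}) supplies nonnegative unit vectors $\xi_x\in\ell^p(X)$ supported in $B(x,S)$ with $\|\xi_x-\xi_y\|_p<\delta$ whenever $d(x,y)\le R$. Put $L=S$ and $k(x,y)=\langle \xi_x^{p/q},\xi_y\rangle$.

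The properties of $k$ then follow quickly.
\begin{itemize}
\item Since $p/q+1=p$, we get $k(x,x)=\|\xi_x\|_p^p=1$.
\item Hölder gives $0\le k\le \|\xi_x^{p/q}\|_q\|\xi_y\|_p=1$, because $\|\xi_x^{p/q}\|_q^q=\|\xi_x\|_p^p=1$.
\item If $d(x,y)\le R$, the same Hölder estimate as in Theorem \ref{Apnuc} gives $|1-k(x,y)|\le\|\xi_x-\xi_y\|_p<\delta$.
\item Disjoint supports force $k(x,y)=0$ when $d(x,y)>2L$.
\end{itemize}

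For the converse, I would construct from the kernels a Higson--Roe map. The natural candidate is $\xi_x$ itself, with
\[
\|\xi_x-\xi_y\|_p^p\le \textstyle\sum_z|\xi_x(z)-\xi_y(z)|\,\bigl|\xi_x(z)^{p-1}-\xi_y(z)^{p-1}\bigr|\cdot C,
\]
or more simply a uniform convexity estimate. We have $\langle\xi_x^{p-1},\xi_y\rangle>1-\delta$, where $\xi_x^{p-1}$ is the norming functional of $\xi_x$. By uniform convexity of $\ell^p$ for $1<p<\infty$, this forces $\|\xi_x-\xi_y\|_p\le \omega(\delta)$ with $\omega(\delta)\to0$: indeed $\|(\xi_x+\xi_y)/2\|_p\ge\langle\xi_x^{p-1},(\xi_x+\xi_y)/2\rangle>1-\delta/2$. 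Combined with the bounded supports, this yields property A via the Higson--Roe characterization. Since that characterization is stated for all $p$ but only one $p$ is needed, I will note that the definition for a single $p$ suffices. The step that needs a little care is this quantitative uniform convexity argument, and I expect it to be the main obstacle.

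For $p$-complete contractivity, I would write $M_k=\psi\circ\eta$ exactly as in Theorem \ref{Apnuc}, with $\phi_z(y)=\xi_y(z)$. There $\eta$ compresses to the blocks $B(z,L)$ and $\psi(b)=\sum_z U_zb_zV_z$. The maps $\eta$ and $\psi$ are $p$-completely contractive by the argument given there and by \cite[Lemma 6.3]{SW17}, applied at each matrix level. Running the same factorization with $\ell^p(X,E)$ in place of $\ell^p(X)$, with $U_z\otimes 1$ and $V_z\otimes 1$, shows that coefficientwise $M_k$ is contractive on $\mathbb{C}[X,\mathcal S]$, and it extends by continuity. Multiplying by the diagonal operators $U_z\otimes1$ and $V_z\otimes1$ preserves $\mathcal S$-valued entries, so $M_k$ maps $B_u^p(X,\mathcal S)$ into itself.

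For the approximation statement, fix the finite set $\mathcal F$ and $\varepsilon>0$. Using density and the contractivity of $M_k$, which is uniform in $k$, it suffices to treat $a\in\mathbb{C}[X,\mathcal S]$ of propagation at most $R$ with $\sup\|a_{x,y}\|\le M'$. Then $M_k(a)-a$ has propagation at most $R$, and its entries have norm at most $\delta M'$. A Schur-test bound on $\ell^p(X,E)$ gives $\|M_k(a)-a\|\le N_R\delta M'$, and choosing $\delta$ small makes this less than $\varepsilon$.
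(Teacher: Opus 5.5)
Your proposal is correct. For the forward direction, the $p$-complete contractivity via the factorization $M_k=\psi\circ\eta$ (run on $\ell^p(X,E)$), and the estimate $\|M_k(a)-a\|\le N_R\delta M'$, it is exactly what the paper means by extracting the lemma from the proof of Theorem~\ref{Apnuc}.

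For the converse, the paper gives no argument beyond pointing to \cite[Lemma 6.18]{DC}. Your route is a valid self-contained substitute: you use the norming functional $\xi_x^{p-1}$ to get $\|(\xi_x+\xi_y)/2\|_p>1-\delta/2$, and then apply uniform convexity of $\ell^p$. Two small points need fixing.
\begin{enumerate}
\item Make explicit why the Higson--Roe condition for one fixed $p$ already gives property A, since the paper's Definition/Proposition is phrased for all $p$. The Mazur map $\xi\mapsto\xi^p$ satisfies $\|\xi^p-\eta^p\|_1\le 2p\|\xi-\eta\|_p$ on nonnegative unit vectors. It preserves the support bound, so it reduces the claim to the $\ell^1$ characterization.
\item Discard the displayed inequality with the constant $C$. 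It holds for $p\ge2$ but fails for $1<p<2$. To see this, compare $(2^{-1/p},2^{-1/p})$ with $\bigl((\tfrac12+t)^{1/p},(\tfrac12-t)^{1/p}\bigr)$: the right-hand side is $O(t^2)$, while the left-hand side is of order $t^p$. This is precisely why the uniform convexity argument is the one to use.
\end{enumerate}
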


\begin{rem}
For $p\in(1,\infty)\setminus\{2\}$, we do not know whether the following
condition implies property~A: for every $a\in B_u^p(X)$ and every
$\varepsilon>0$, there exists a finite propagation Schur multiplier
$M_k$ such that
\[
    \|M_k(a)-a\|<\varepsilon.
\]
Such an approximation property implies that all ghost operators in
$B_u^p(X)$ are trivial; see the argument of \cite[Theorem 6.24 and Proposition 6.28]{DC}.
It is not known whether this is sufficient to recover property~A when
$p\neq 2$.
\end{rem}

\begin{thm} (cf. \cite[Theorem 2.3]{Zach}) \label{thm:functor}
The following conditions are equivalent for a discrete group $\Gamma$:
\begin{enumerate}
\item $\Gamma$ is exact.
\item For every $p\in(1,\infty)$, every $SQ_p$ space $E$, and every $p$-operator space $S\subseteq B(E)$,
\[ B^p_u(|\Gamma|,S) = \{ x\in B^p_u(|\Gamma|,B(E)):x_{s,t}\in S\;\text{for all}\;s,t\in\Gamma \}. \]
\item For every $p\in(1,\infty)$, the functor $B^p_u(|\Gamma|,-)$ is exact on $SQ_p$-operator algebras.
\end{enumerate}
\end{thm}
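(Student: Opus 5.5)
The plan is to prove (1)$\Rightarrow$(2)$\Rightarrow$(3)$\Rightarrow$(1), modelled on Zacharias' argument for $p=2$. Throughout we use that exactness of $\Gamma$ is equivalent to property A for $|\Gamma|$.

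For (1)$\Rightarrow$(2), the inclusion $\subseteq$ is clear, since matrix entries are continuous in norm and $\mathbb{C}[|\Gamma|,S]$ has entries in $S$. For the reverse inclusion, let $x\in B^p_u(|\Gamma|,B(E))$ with all entries in $S$, and fix $\varepsilon>0$. Lemma \ref{lem:Schur}, applied with $\mathcal S=B(E)$ and $\mathcal F=\{x\}$, gives a kernel $k$ of finite propagation $2L$ such that $\|M_k(x)-x\|<\varepsilon$. The operator $M_k(x)$ has finite propagation, and its entries are $k(s,t)x_{s,t}\in S$. They are uniformly bounded because $|x_{s,t}|\le\|x\|$. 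Hence $M_k(x)\in\mathbb{C}[|\Gamma|,S]$, and letting $\varepsilon\to0$ gives $x\in B^p_u(|\Gamma|,S)$.

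For (2)$\Rightarrow$(3), take a short exact sequence $0\to J\to A\xrightarrow{\pi} B\to0$ of $SQ_p$-operator algebras. The maps $\widehat{\iota}$ and $\widehat{\pi}$ are defined by the construction preceding Lemma \ref{lem:tensorIncl}, and $\widehat{\iota}$ is isometric.

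\emph{Exactness in the middle.} If $x\in B^p_u(|\Gamma|,A)$ and $\widehat\pi(x)=0$, then every entry of $x$ lies in $J$. Applying (2) with $S=J\subseteq A\subseteq B(E)$ shows $x\in B^p_u(|\Gamma|,J)$.

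\emph{Surjectivity of $\widehat\pi$.} This is the step I expect to be the main obstacle, because norm control on lifts is needed. Here is what I would try first. An element of $\mathbb{C}[|\Gamma|,B]$ with propagation $R$ decomposes, by bounded geometry (colour $\Gamma$ so that $R$-balls are separated), as a sum of at most $N_R$ partial translations with coefficients. Each coefficient can be lifted entrywise with norm at most $2\sup\|b_{s,t}\|$, since $\pi$ is a quotient map. So $\widehat\pi$ has dense range, and the lifts of finite-propagation elements have controlled norm.

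The decisive point is to show that $\widehat\pi$ induces an isometry $B^p_u(|\Gamma|,A)/B^p_u(|\Gamma|,J)\to B^p_u(|\Gamma|,B)$. Once that holds, the closed-range argument gives surjectivity. To get it, use property A once more. For $b=\widehat\pi(a)$, the Schur cut-downs $M_k$ reduce the question to the uniformly block-diagonal algebra $\prod_x B(\ell^p(B(x,L)))\otimes A$, as in the factorization $\psi\circ\eta$ of Theorem \ref{Apnuc}. There the quotient norm is computed blockwise. This works because $M_n(A)/M_n(J)\cong M_n(B)$ isometrically, using the $p$-complete quotient assumption, and $\psi$ is $p$-completely contractive. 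It follows that $\operatorname{dist}(a,B^p_u(|\Gamma|,J))\le\|b\|+\varepsilon$.

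For (3)$\Rightarrow$(1), take $p=2$. An exact sequence of $C^*$-algebras is in particular a sequence of $SQ_2$-operator algebras. So (3) gives exactness of $B^2_u(|\Gamma|,-)=$ the $C^*$ uniform Roe functor with coefficients, which is $\ell^\infty(\Gamma,-)\rtimes_r\Gamma$. By Zacharias \cite[Theorem 2.3]{Zach}, or directly by taking $A=\ell^\infty(\mathbb N, K)$-type sequences, this exactness forces $\Gamma$ to be exact. One could alternatively use (2) at $p=2$, which is precisely Zacharias' condition (b).
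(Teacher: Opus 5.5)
Your proposal is correct. For (1)$\Rightarrow$(2), for exactness in the middle, and for the return to exactness via Zacharias at $p=2$, it is the paper's argument.

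The real difference is in the surjectivity of $\widehat\pi$.

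\textbf{The paper's route.} The paper starts from an arbitrary $b\in B^p_u(|\Gamma|,B)$ and picks $k$ with $\|M_k(b)-b\|<\frac12\|b\|$. It lifts each block $P_xb\iota_x$ to some $c_x\in M_{|F_x|}(A)$ with $\|c_x\|\le 2\|b\|$. It then assembles $a=U(\bigoplus_x c_x)V$, so that $\widehat\pi(a)=M_k(b)$ and $\|a\|\le 2\|b\|$. Iterating gives an exact lift of norm at most $4\|b\|$.

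\textbf{Your route.} You split the problem in two:
\begin{enumerate}
\item Density of the range, from entrywise lifts of elements of $\mathbb{C}[|\Gamma|,B]$. This needs no property A.
\item A lower bound $\operatorname{dist}(a,B^p_u(|\Gamma|,J))\le\|\widehat\pi(a)\|$. You get it by applying $M_k=\psi\circ\eta$ to $a$ itself and correcting each block $P_xa\iota_x$ by an almost optimal element of $M_{|F_x|}(J)$.
\end{enumerate}
The second step works because $\psi$ sends bounded families in $\prod_x M_{|F_x|}(J)$ into $\mathbb{C}[|\Gamma|,J]$. Closed range plus dense range then gives surjectivity. Read your block algebra as $\prod_x M_{|B(x,L)|}(A)$. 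The spatial tensor product of $\prod_x B(\ell^p(B(x,L)))$ with $A$ need not contain $\eta(a)$.

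\textbf{Comparison.} The engine is the same in both: blockwise quotient norms from $M_n(A)/M_n(J)\cong M_n(B)$, reassembled by the contractions $U$ and $V$. Your packaging yields slightly more. It shows the induced map $B^p_u(|\Gamma|,A)/B^p_u(|\Gamma|,J)\to B^p_u(|\Gamma|,B)$ is isometric, and $p$-completely isometric if you run it with $M_n(A)$ in place of $A$. That is what the paper's convention for short exact sequences of $SQ_p$-operator algebras would ask of the image sequence. The paper's iteration is more constructive and avoids the closed-range step, but as written it only gives a lift with constant $4$.

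\textbf{Bookkeeping.} The step you label (2)$\Rightarrow$(3) invokes property A, so it is really (1)$\Rightarrow$(3). The cycle (1)$\Rightarrow$(2)$\Rightarrow$(3)$\Rightarrow$(1) is therefore not literally what you proved. The equivalence still closes: as you remark at the end, (2) at $p=2$ is Zacharias' condition and hence implies (1). This is exactly the paper's scheme of (i)$\Rightarrow$(ii), (i)$\Rightarrow$(iii), and (ii) or (iii)$\Rightarrow$(i).
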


\begin{proof}
(i) $\Rightarrow$ (ii):
The inclusion $\subseteq$ always holds, so we shall prove the reverse inclusion $\supseteq$. Let $A$ denote the set on the right-hand side.

Fix $x\in A$ and $\varepsilon>0$. Since $\Gamma$ is exact, $|\Gamma|$
has property~A. By Lemma~\ref{lem:Schur}, applied to the
singleton $\{x\}\subseteq B_u^p(|\Gamma|,B(E))$, there exists a finite
propagation kernel $k$ such that
\[
    \|M_k(x)-x\|<\varepsilon.
\]
Since $k$ has finite propagation, so does $M_k(x)$. Moreover,
\[
    M_k(x)_{s,t}=k(s,t)x_{s,t}\in S
\]
for all $s,t\in\Gamma$. Hence
\[
    M_k(x)\in \mathbb{C}[|\Gamma|,S]\subseteq B_u^p(|\Gamma|,S).
\]
As $\varepsilon>0$ was arbitrary and $B_u^p(|\Gamma|,S)$ is
norm-closed, it follows that $x\in B_u^p(|\Gamma|,S)$. Thus
\[
A\subseteq B_u^p(|\Gamma|,S),
\]
and the reverse inclusion follows.

(i) $\Rightarrow$ (iii): Fix $p\in(1,\infty)$. Represent $A$ and $B$ concretely on $SQ_p$ spaces $E_A$ and $E_B$,
respectively, so that
\[
A\subseteq B(E_A),
\qquad
B\subseteq B(E_B),
\] and let
\[
0\longrightarrow J\longrightarrow A
\overset{\pi}{\longrightarrow}B\longrightarrow0
\]
be a short exact sequence of $SQ_p$-operator algebras. By the
functoriality discussed after Definition~2.6, entrywise application of
$\pi$ induces a contractive homomorphism
\[
\widehat{\pi}:B_u^p(|\Gamma|,A)
\longrightarrow B_u^p(|\Gamma|,B).
\]

We first show that $\widehat{\pi}$ is surjective. We claim that for
every nonzero $b\in B_u^p(|\Gamma|,B)$,
there exists $a\in B_u^p(|\Gamma|,A)$ such that
\begin{equation}\label{eq:approximate-lift}
\|a\|\leq 2\|b\|,
\qquad
\|\widehat{\pi}(a)-b\|<\frac12\|b\|.
\end{equation}

Since $|\Gamma|$ has property~A, Lemma~\ref{lem:Schur} gives nonnegative unit
vectors
\[
\xi_y\in\ell^p(\Gamma),\qquad y\in\Gamma,
\]
and $L>0$ such that
\[
\operatorname{supp}(\xi_y)\subseteq B(y,L)
\]
and, for the kernel
\[
k(y,z)=
\big\langle\xi_y^{p/q},\xi_z\big\rangle,
\]
we have
\[
\|M_k(b)-b\|<\frac12\|b\|.
\]
For $x,y\in\Gamma$, put
\[
\varphi_x(y)=\xi_y(x),
\qquad
F_x=B(x,L).
\]
Then
\begin{equation}\label{eq:partition-lift}
\sum_{x\in\Gamma}\varphi_x(y)^p=1
\end{equation}
for every $y\in\Gamma$, and $\varphi_x(y)=0$ unless $y\in F_x$.

Let $P_x$ and $\iota_x$ denote the canonical projection onto and
inclusion of $\ell^p(F_x,E_B)$, respectively, and put
\[
b_x=P_xb\iota_x.
\]
After identifying $b_x$ with an element of
$M_{|F_x|}(B)$, we have
\[
\|b_x\|\leq\|b\|.
\]
Since $\pi$ is a $p$-complete quotient map, for each $x\in\Gamma$
we may choose
\[
c_x\in M_{|F_x|}(A)
\]
such that
\[
\pi_{|F_x|}(c_x)=b_x,
\qquad
\|c_x\|\leq 2\|b_x\|\leq2\|b\|.
\]
When writing matrix entries of $c_x$, we extend them by zero outside
$F_x\times F_x$.

Consider the $p$-direct sum
\[
\mathcal E
=
\bigoplus_{x\in\Gamma}^{p}\ell^p(F_x,E_A).
\]
Define
\[
V:\ell^p(\Gamma,E_A)\longrightarrow\mathcal E
\]
by
\[
(V\eta)_x(y)=\varphi_x(y)\eta(y),
\qquad y\in F_x,
\]
and define
\[
U:\mathcal E\longrightarrow\ell^p(\Gamma,E_A)
\]
by
\[
U((\eta_x)_x)(y)
=
\sum_{x\in\Gamma}
\varphi_x(y)^{p/q}\eta_x(y).
\]
By \eqref{eq:partition-lift}, $V$ is an isometry. H\"older's
inequality and
\[
\sum_x
\bigl(\varphi_x(y)^{p/q}\bigr)^q
=
\sum_x\varphi_x(y)^p
=1
\]
show that $U$ is contractive.

Let
\[
D=\bigoplus_{x\in\Gamma}c_x\in B(\mathcal E).
\]
Then
\[
\|D\|=\sup_x\|c_x\|\leq2\|b\|.
\]
Set
\[
a=UDV.
\]
It follows that
\[
\|a\|\leq2\|b\|.
\]
For $y,z\in\Gamma$, the $(y,z)$-entry of $a$ is
\begin{equation}\label{eq:assembled-lift}
a(y,z)
=
\sum_{x\in\Gamma}
\varphi_x(y)^{p/q}c_x(y,z)\varphi_x(z).
\end{equation}
The sum is finite. Moreover, if a summand in
\eqref{eq:assembled-lift} is nonzero, then $y,z\in B(x,L)$, and hence
\[
d(y,z)\leq2L.
\]
Thus $a$ has finite propagation. Its entries belong to $A$, and their
norms are uniformly bounded, so
\[
a\in \mathbb{C}[|\Gamma|,A]\subseteq B_u^p(|\Gamma|,A).
\]

Applying $\pi$ entrywise to \eqref{eq:assembled-lift} gives
\[
\begin{aligned}
\widehat{\pi}(a)(y,z)
&=
\sum_x
\varphi_x(y)^{p/q}
\pi(c_x(y,z))
\varphi_x(z) \\
&=
\sum_x
\varphi_x(y)^{p/q}
b(y,z)
\varphi_x(z) \\
&=
k(y,z)b(y,z).
\end{aligned}
\]
Hence
\[
\widehat{\pi}(a)=M_k(b).
\]
Together with the choice of $k$, this proves
\eqref{eq:approximate-lift}.

We now iterate the approximate lifting. Set $r_0=b$. Unless the
process has already terminated, apply \eqref{eq:approximate-lift} to
$r_{n-1}$ to obtain $a_n\in B_u^p(|\Gamma|,A)$ such that
\[
\|a_n\|\leq2\|r_{n-1}\|
\]
and, putting
\[
r_n=r_{n-1}-\widehat{\pi}(a_n),
\]
we have
\[
\|r_n\|<\frac12\|r_{n-1}\|.
\]
Consequently,
\[
\|r_n\|\leq2^{-n}\|b\|
\]
and
\[
\sum_{n=1}^\infty\|a_n\|
\leq
2\|b\|\sum_{n=0}^\infty2^{-n}
=
4\|b\|.
\]
Therefore
\[
a=\sum_{n=1}^\infty a_n
\]
converges in $B_u^p(|\Gamma|,A)$. Since $\widehat{\pi}$ is
continuous,
\[
\widehat{\pi}(a)
=
\lim_{N\to\infty}\sum_{n=1}^N\widehat{\pi}(a_n)
=
\lim_{N\to\infty}(b-r_N)
=
b.
\]
Thus $\widehat{\pi}$ is surjective.

It remains to identify its kernel. Clearly,
\[
B_u^p(|\Gamma|,J)\subseteq\ker\widehat{\pi}.
\]
Conversely, if $x\in\ker\widehat{\pi}$, then
\[
\pi(x_{s,t})=0
\]
for all $s,t\in\Gamma$, and hence $x_{s,t}\in J$ for all $s,t$.
Regarding $A$ as a closed subalgebra of $B(E)$ for some $SQ_p$
space $E$, the already proved implication (i) $\Rightarrow$ (ii),
applied to the $p$-operator space $J\subseteq B(E)$, gives
\[
x\in B_u^p(|\Gamma|,J).
\]
Therefore
\[
\ker\widehat{\pi}=B_u^p(|\Gamma|,J),
\]
and so
\[
0\longrightarrow B_u^p(|\Gamma|,J)
\longrightarrow B_u^p(|\Gamma|,A)
\overset{\widehat{\pi}}{\longrightarrow}
B_u^p(|\Gamma|,B)
\longrightarrow0
\]
is exact. This proves~(iii).

Finally, suppose that either~(ii) or~(iii) holds. Taking $p=2$,
we recover, respectively, condition~(3) or condition~(4) of
\cite[Theorem~2.3]{Zach}. Hence $\Gamma$ is exact in either case.
This completes the proof.
\end{proof}

\begin{lem} (cf. \cite[Lemma 3.1]{Zach}) \label{lem:Zach}
Suppose that $\Gamma$ is exact, $E$ is an $SQ_p$ space, and $S\subseteq B(E)$ is a $p$-operator space.
Then \[ B^p_u(|\Gamma|,S)^\Gamma = (B^p_u|\Gamma| \otimes S)^\Gamma. \]
\end{lem}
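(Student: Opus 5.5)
The inclusion $(B^p_u|\Gamma|\otimes S)^\Gamma\subseteq B^p_u(|\Gamma|,S)^\Gamma$ is already recorded before Definition~\ref{def:pITAP} (it comes from Lemma~\ref{lem:tensorIncl}). So the work is the reverse inclusion. Fix $x\in B^p_u(|\Gamma|,S)^\Gamma$ and $\varepsilon>0$. The plan is to approximate $x$ by $\Gamma$-invariant elements of $\mathbb{C}[|\Gamma|,S]$ that in fact lie in $B^p_u|\Gamma|\otimes S$, and then use that $(B^p_u|\Gamma|\otimes S)^\Gamma$ is norm closed.

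\textbf{Step 1: an invariant Schur kernel.} Since $\Gamma$ is exact, $|\Gamma|$ has property A. Lemma~\ref{lem:Schur} gives a finite propagation kernel $k$ with $\|M_k(x)-x\|<\varepsilon$, where $M_k$ is contractive on $B^p_u(|\Gamma|,S)$. The Schur multiplier does not commute with $\mathrm{Ad}(\rho)$ unless $k$ is right invariant, so I would average. Because $\Gamma$ acts by right translations, which are isometries of the metric, the Ozawa/Higson--Roe characterization of exactness for groups lets us take $\xi_{sg}=\rho_{g^{-1}}\xi_s$, i.e.\ $\xi_s=\lambda$-translates of a single vector. Concretely, put $\xi_s=\rho_{s}^{-1}\xi_e$-type equivariant choices, obtained from a finitely supported positive unit vector $\eta\in\ell^p(\Gamma)$. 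Then $k(sg,tg)=k(s,t)$, i.e.\ $k(s,t)=u(st^{-1})$. If Lemma~\ref{lem:Schur} as stated does not directly give invariance, I would redo its construction with the invariant family. I would also check that for exact groups the approximation $\|M_k(x)-x\|$ small can still be achieved with such invariant kernels. For $x$ of finite propagation $R$ and uniformly bounded entries this follows from the bound $\|M_k(x)-x\|\le N_R\sup_{d\le R}|k-1|\,\|x\|$-type estimate used in Theorem~\ref{Apnuc}. For general $x$ it follows by density, using contractivity of $M_k$. I expect this step to be the main obstacle: it needs a right-invariant version of the property A kernel and a control that does not require first truncating $x$ non-invariantly.

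\textbf{Step 2: the approximant lies in the tensor product.} With $k(s,t)=u(st^{-1})$, $u$ finitely supported, the element $y=M_k(x)$ is $\Gamma$-invariant. It is supported on $\{(s,t):st^{-1}\in F\}$ for the finite set $F=\operatorname{supp}u$. By invariance, $y_{s,t}=y_{st^{-1},e}=:a_g$ with $g=st^{-1}$. Hence
\[ y=\sum_{g\in F}\lambda_g\otimes a_g, \]
using that $\lambda_g$ has matrix entries $1$ exactly at $(gt,t)$. Each $a_g\in S$, so $y\in F^p_\lambda(\Gamma)\odot S\subseteq B^p_u|\Gamma|\otimes S$, and $y$ is $\Gamma$-invariant.

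\textbf{Step 3: conclude.} Since $\|y-x\|<\varepsilon$ and $\varepsilon$ is arbitrary, $x$ lies in the closure of $(B^p_u|\Gamma|\otimes S)^\Gamma$. That space is closed, as the fixed points of isometries on a closed space. Therefore $x\in(B^p_u|\Gamma|\otimes S)^\Gamma$, which gives the equality. In fact this argument yields $x\in F^p_\lambda(\Gamma)\otimes S$, which is stronger than what is needed.
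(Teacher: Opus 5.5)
\textbf{There is a genuine gap: Step 1 fails, and for a structural reason rather than a technical one.}

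Take an equivariant family $\xi_s=\rho_{s^{-1}}\eta$, with $\eta\geq 0$ a finitely supported unit vector. It gives $k(s,t)=u(st^{-1})$ with $u(g)=\langle \eta^{p/q},\rho_g\eta\rangle$. Requiring $k>1-\delta$ on the tube $\{d(s,t)\le R\}$ then forces, by uniform convexity of $\ell^p$, $\|\eta-\rho_g\eta\|_p$ to be small for all $g\in B(e,R)$. That is Reiter's condition, so such kernels exist only when $\Gamma$ is amenable. The Higson--Roe/Ozawa characterization of exactness supplies non-equivariant families, and a non-amenable exact group has no equivariant ones.

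Your closing remark shows the argument proves too much. It would give $B^p_u(|\Gamma|,S)^\Gamma=F^p_\lambda(\Gamma)\otimes S$ for every exact $\Gamma$ and every $S\subseteq K(\ell^p)$, i.e.\ the $p$-operator ITAP. By Theorem~\ref{thm:ZachP} that would give the $p$-AP for every exact group, which is false for $SL(3,\mathbb{Z})$ (see Remark~\ref{rem:pAP} and Remark~\ref{open2}).

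\textbf{The missing idea: the approximant need not be invariant; only $x$ must be.} Keep the non-invariant kernel $k$ from Lemma~\ref{lem:Schur} with $\|M_k(x)-x\|<\varepsilon$. Set $F=B(e,\operatorname{prop}(k))$ and truncate $x$ in the group variable:
\[ x_F=\sum_{g\in F}\lambda_g\otimes x(g,e)\in\mathbb{C}\Gamma\odot S. \]
Since $x(s,t)=x(st^{-1},e)$ and $d(st^{-1},e)=d(s,t)$, we have $x_F(s,t)=x(s,t)$ whenever $k(s,t)\neq 0$. Hence
\[ M_k(x)=M_k(x_F)=\sum_{g\in F}M_k(\lambda_g)\otimes x(g,e)\in B^p_u|\Gamma|\odot S, \]
because each $M_k(\lambda_g)$ has finite propagation. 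It is generally not in $F^p_\lambda(\Gamma)$, which is exactly why the lemma only reaches $B^p_u|\Gamma|\otimes S$.

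Letting $\varepsilon\to 0$ gives $x\in B^p_u|\Gamma|\otimes S$. Since $x$ is already invariant and the inclusion $B^p_u|\Gamma|\otimes S\subseteq B^p_u(|\Gamma|,S)$ is equivariant, $x\in(B^p_u|\Gamma|\otimes S)^\Gamma$. This is the paper's proof.

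Your worry about ``truncating $x$ non-invariantly'' is resolved by truncating according to $st^{-1}\in F$. That truncation is itself invariant, and it is invisible to $M_k$, since $M_k$ already kills every entry with $st^{-1}\notin F$.
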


\begin{proof}
The inclusion $B^p_u(|\Gamma|,S)^\Gamma \supseteq (B^p_u|\Gamma| \otimes S)^\Gamma$ follows from Lemma \ref{lem:tensorIncl}, so we shall prove the reverse inclusion.

Let $a\in B_u^p(|\Gamma|,S)^\Gamma$ and let $\varepsilon>0$. Since $\Gamma$ is exact, $|\Gamma|$ has
property~A. By Lemma~\ref{lem:Schur}, applied to the
singleton $\{a\}$, there exists a finite propagation kernel $k$ such
that
\[
    \|M_k(a)-a\|<\varepsilon.
\]
Put
\[
    R=\operatorname{prop}(k)
    \qquad\text{and}\qquad
    F=B(e,R).
\]
Since the metric on $\Gamma$ is proper, $F$ is finite.

As $a$ is $\Gamma$-invariant,
\[
    a(s,t)=a(st^{-1},e)
\]
for all $s,t\in\Gamma$. Define
\[
    a_F=\sum_{r\in F}\lambda_r\otimes a(r,e)
    \in \mathbb{C}\Gamma\odot S.
\]
Equivalently,
\[
a_F(s,t)=
\begin{cases}
a(s,t),& st^{-1}\in F,\\
0,& st^{-1}\notin F.
\end{cases}
\]
If $k(s,t)\neq0$, then $d(st^{-1},e)=d(s,t)\leq R$, and hence $st^{-1}\in F$. Therefore
\[
    M_k(a)=M_k(a_F).
\]
Consequently,
\[
\begin{aligned}
M_k(a)
   &=M_k(a_F)\\
   &=\sum_{r\in F}M_k(\lambda_r)\otimes a(r,e)
     \in B_u^p|\Gamma|\odot S
     \subseteq B_u^p|\Gamma|\otimes S.
\end{aligned}
\]
Since $\|M_k(a)-a\|<\varepsilon$ and $\varepsilon>0$ was arbitrary,
we conclude that
$a\in B_u^p|\Gamma|\otimes S$.
Thus
\[
B_u^p(|\Gamma|,S)^\Gamma
\subseteq
\bigl(B_u^p|\Gamma|\otimes S\bigr)
\cap B_u^p(|\Gamma|,S)^\Gamma.
\]
Using Lemma~\ref{lem:tensorIncl} and the fact that the inclusion
$
B_u^p|\Gamma|\otimes S
\subseteq
B_u^p(|\Gamma|,S)
$
is $\Gamma$-equivariant, the right-hand side is precisely $(B_u^p|\Gamma|\otimes S)^\Gamma$.
This proves the result.
\end{proof}

\begin{thm} (cf. \cite[Theorem 3.2]{Zach}) \label{thm:ZachP}
Suppose that $\Gamma$ is exact. Then the following are equivalent:
\begin{enumerate}
\item $\Gamma$ has the $p$-operator ITAP.
\item $B^p_u(|\Gamma|,S)^\Gamma = (B^p_u|\Gamma|\otimes S)^\Gamma = F^p_\lambda(\Gamma)\otimes S$ for any $p$-operator space $S\subseteq K(\ell^p)$.
\item $B^p_u(|\Gamma|,S)^\Gamma = (B^p_u|\Gamma|\otimes S)^\Gamma = F^p_\lambda(\Gamma)\otimes S$ for any $p$-operator space $S\subseteq B(\ell^p)$.
\item $\Gamma$ has the $p$-AP.
\end{enumerate}
\end{thm}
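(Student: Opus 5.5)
We will follow the strategy of Zacharias' proof, passing through the slice map property of Section 5. Since $\Gamma$ is exact, Lemma \ref{lem:Zach} gives $B^p_u(|\Gamma|,S)^\Gamma=(B^p_u|\Gamma|\otimes S)^\Gamma$ for every $p$-operator space $S$. Hence (i)$\Leftrightarrow$(ii) is immediate from the definition of the $p$-operator ITAP. Likewise (iii)$\Rightarrow$(ii) is trivial, because $K(\ell^p)\subseteq B(\ell^p)$. The remaining work is to prove (iv)$\Rightarrow$(iii) and (ii)$\Rightarrow$(iv).

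For (iv)$\Rightarrow$(iii), suppose $\Gamma$ has the $p$-AP. By Theorem \ref{thm:ALRAP}, $F^p_\lambda(\Gamma)$ then has the strong $p$-OAP, i.e.\ the $p$-operator approximation property for $B(\ell^p)$. By Theorem \ref{thm:OAPslice}, $F^p_\lambda(\Gamma)$ has the slice map property for $B(\ell^p)$, so $(F^p_\lambda(\Gamma),S,F^p_\lambda(\Gamma)\otimes B(\ell^p))$ has the slice map property for every $S\subseteq B(\ell^p)$.

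We next apply Proposition \ref{prop:UZ220} with $A=B^p_u|\Gamma|$ carrying the $\mathrm{Ad}(\rho)$ action and $B=S$ carrying the trivial action. This gives $(B^p_u|\Gamma|\otimes S)^\Gamma=F(B^p_u|\Gamma|^\Gamma,S,B^p_u|\Gamma|\otimes S)$. We must therefore show that every $x$ in this Fubini product lies in $F^p_\lambda(\Gamma)\otimes S$. Taking $S=\mathbb{C}$ in the definition, slicing $x$ by functionals on $S$ lands in $B^p_u|\Gamma|^\Gamma$, but we need the slices to lie in $F^p_\lambda(\Gamma)$. Here the $p$-AP enters a second time: the multipliers $\hat m_{u_\alpha}$ with $u_\alpha$ finitely supported converge to the identity in point-norm on $B^p_u(|\Gamma|,S)$, by the discussion after Lemma \ref{lem:Qpcb}. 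For $\Gamma$-invariant $x$, $\hat m_{u_\alpha}(x)$ is a finite sum $\sum_{r}u_\alpha(r)\lambda_r\otimes x(r,e)\in\mathbb{C}\Gamma\odot S$. Therefore $x\in F^p_\lambda(\Gamma)\otimes S$ directly, and the slice-map machinery can even be bypassed for this implication. The $S=\mathbb{C}$ case also yields the $p$-ITAP.

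For (ii)$\Rightarrow$(iv), the plan is to take $S=K(\ell^p)$, which is matrix stable. It suffices to show that $F^p_\lambda(\Gamma)$ has the slice map property for $K(\ell^p)$. The converse half of Theorem \ref{thm:OAPslice} then gives the $p$-OAP of $F^p_\lambda(\Gamma)$, and Theorem \ref{thm:ALRAP} gives the $p$-AP.

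To obtain the slice map property, we combine (ii) with Proposition \ref{prop:UZ220}. For any $T\subseteq K(\ell^p)$ this yields
\[ F(B^p_u|\Gamma|^\Gamma,T,B^p_u|\Gamma|\otimes T)=(B^p_u|\Gamma|\otimes T)^\Gamma=F^p_\lambda(\Gamma)\otimes T, \]
and (ii) with $S=\mathbb{C}$ gives $B^p_u|\Gamma|^\Gamma=F^p_\lambda(\Gamma)$. So $(F^p_\lambda(\Gamma),T,B^p_u|\Gamma|\otimes T)$ has the slice map property, and by the converse part of Proposition \ref{prop:UZ223} so does $(F^p_\lambda(\Gamma),T,F^p_\lambda(\Gamma)\otimes T)$. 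We also need the triple $(F^p_\lambda(\Gamma),T,F^p_\lambda(\Gamma)\otimes K(\ell^p))$.

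Applying Proposition \ref{prop:UZ223} to pass from $T$ to $K(\ell^p)$ requires the triple $(B^p_u|\Gamma|,T,B^p_u|\Gamma|\otimes K(\ell^p))$ to have the slice map property. This is where exactness is used: $B^p_u|\Gamma|$ is $p$-nuclear by Theorem \ref{Apnuc}, hence has the $p$-CCAP, hence the strong $p$-OAP by Proposition \ref{prop:CBAPOAP}. Theorem \ref{thm:OAPslice} then gives $B^p_u|\Gamma|$ the slice map property for $K(\ell^p)$. Applying Proposition \ref{prop:UZ223} with $A=B^p_u|\Gamma|$, $S=F^p_\lambda(\Gamma)$, $B=K(\ell^p)$ yields that $(F^p_\lambda(\Gamma),T,F^p_\lambda(\Gamma)\otimes K(\ell^p))$ has the slice map property for all $T$. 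This is exactly the slice map property of $F^p_\lambda(\Gamma)$ for $K(\ell^p)$.

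The main obstacle is this last bookkeeping step with the Fubini products. It requires checking that the hypotheses of Proposition \ref{prop:UZ223} match, in particular that all spaces involved are $p$-operator spaces on $L^p$ space, so that spatial and injective tensor products agree.
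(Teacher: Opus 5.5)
Your proof is correct and complete. In (iv)$\Rightarrow$(iii), the argument you finally use is the paper's: finitely supported $\hat m_{u_\alpha}$ converge in point-norm and send invariant elements into $\mathbb{C}\Gamma\odot S$. The preliminary slice-map discussion there can be dropped. So can your deduction for the triple $(F^p_\lambda(\Gamma),T,F^p_\lambda(\Gamma)\otimes T)$, since $F(S,T,S\otimes T)=S\otimes T$ always holds.

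The genuine difference is in the implication towards the $p$-AP. Both routes reduce to the slice map property of $F^p_\lambda(\Gamma)$ for $K(\ell^p)$, but they prove it differently.

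\emph{The paper's route} verifies it by hand. An element $x$ of the Fubini product has matrix entries $x_{s,t}$, which are slices by $\langle e_s,\cdot\,e_t\rangle$, so they lie in $S$. Theorem \ref{thm:functor} then puts $x$ in $B^p_u(|\Gamma|,S)$. The $p$-operator ITAP, applied to $K(\ell^p)$ and to $S$, gives $x\in B^p_u(|\Gamma|,S)^\Gamma=F^p_\lambda(\Gamma)\otimes S$.

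\emph{Your route} uses the tensor-product half of (ii) and transfers the slice map property from the ambient algebra. This needs $p$-nuclearity of $B^p_u|\Gamma|$ (Theorem \ref{Apnuc}), then Proposition \ref{prop:UZ220}, Proposition \ref{prop:UZ223} and Theorem \ref{thm:OAPslice}. Since (ii) forces $B^p_u|\Gamma|^\Gamma=F^p_\lambda(\Gamma)$, and hence $B^p_u|\Gamma|^\Gamma\otimes S=(B^p_u|\Gamma|\otimes S)^\Gamma$, your argument is exactly Proposition \ref{prop:ZachQn} and Corollary \ref{cor210KU}, followed by Theorem \ref{thm:ALRAP}.

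\emph{What each buys.} The paper's route is more elementary: it uses only the Schur-multiplier consequence of property A, with no nuclearity theorem and no Fubini-product transfer. Yours never touches matrix entries or $B^p_u(|\Gamma|,S)$. It shows that the hard implication is a special case of a general principle: the $p$-OAP of $B^p_u|\Gamma|$ together with $B^p_u|\Gamma|^\Gamma\otimes S=(B^p_u|\Gamma|\otimes S)^\Gamma$ yields the $p$-OAP of $B^p_u|\Gamma|^\Gamma$.
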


\begin{proof}
(i) $\Leftrightarrow$ (ii): follows from Lemma \ref{lem:Zach}.

(iii) $\Rightarrow$ (i): trivial.

(i) $\Rightarrow$ (iv): By Theorem \ref{thm:ALRAP} and Theorem \ref{thm:OAPslice}, it suffices to show that $F^p_\lambda(\Gamma)$ has the slice map property for $K(\ell^p)$, i.e., 
\[ F(F^p_\lambda(\Gamma),S,F^p_\lambda(\Gamma)\otimes K(\ell^p)) = F^p_\lambda(\Gamma)\otimes S \] for all closed subspaces $S\subseteq K(\ell^p)$.
Recall that the Fubini product $F(F^p_\lambda(\Gamma),S,F^p_\lambda(\Gamma)\otimes K(\ell^p))$ is defined to be the set \[ \{ x\in F^p_\lambda(\Gamma)\otimes K(\ell^p): (\phi\otimes id_{K(\ell^p)})(x)\in S\;\text{for all}\;\phi\in F^p_\lambda(\Gamma)^* \}. \]
It is clear that $F^p_\lambda(\Gamma)\otimes S$ is contained in this set.
By Lemma \ref{lem:tensorIncl}, we have $F^p_\lambda(\Gamma)\otimes K(\ell^p)\subseteq B^p_u(|\Gamma|,K(\ell^p))$, so any element $x$ in the above Fubini product may be regarded as a $\Gamma\times\Gamma$ matrix with entries in $K(\ell^p)$.
Given such an $x$, note that for $s,t\in\Gamma$, we have $x_{s,t}=(\phi\otimes id_{K(\ell^p)})(x)$, where $\phi=\langle e_s,\cdot e_t \rangle$. Thus, $x_{s,t}\in S$ for all $s,t\in\Gamma$.
Since $\Gamma$ is exact, it follows from Theorem \ref{thm:functor} that $x\in (F^p_\lambda(\Gamma)\otimes K(\ell^p)) \cap B^p_u(|\Gamma|,S)$.
Now, since $\Gamma$ has the $p$-operator ITAP, we have 
\begin{align*}
(F^p_\lambda(\Gamma)\otimes K(\ell^p)) \cap B^p_u(|\Gamma|,S) &= (B^p_u|\Gamma|^\Gamma\otimes K(\ell^p)) \cap B^p_u(|\Gamma|,S) \\
&= B^p_u(|\Gamma|,K(\ell^p))^\Gamma.\cap B^p_u(|\Gamma|,S) \\
&= B^p_u(|\Gamma|,S)^\Gamma \\
&= F^p_\lambda(\Gamma)\otimes S.
\end{align*}
Hence the Fubini product is contained in $F^p_\lambda(\Gamma)\otimes S$.

(iv) $\Rightarrow$ (iii): Let $\{u_\alpha\}$ be a net in $A_p(\Gamma)$ such that $u_\alpha\to 1$ in the $\sigma(M_{cb}A_p(\Gamma),Q_{pcb}(\Gamma))$ topology. Without loss of generality, we may assume that each $u_\alpha$ has finite support, and $\hat{m}_{u_\alpha}(x)\to x$ in norm for all $x\in B^p_u(|\Gamma|,B(\ell^p))$.
Since $\hat{m}_{u_\alpha}$ commutes with $\mathrm{Ad}(\rho_t)$ for all $t\in\Gamma$, regarding $\hat{m}_{u_\alpha}$ as a $p$-completely bounded operator on $B^p_u(|\Gamma|,S)$ for any closed subspace $S\subseteq B(\ell^p)$, invariant elements are mapped to invariant elements.
Moreover, $\hat{m}_{u_\alpha}(B^p_u(|\Gamma|,S)^\Gamma) \subseteq \mathbb{C}\Gamma\odot S$ by a similar argument as in the proof of the previous lemma.
Since $\hat{m}_{u_\alpha}(x)\to x$ for all $x\in B^p_u(|\Gamma|,S)$, it follows that $B^p_u(|\Gamma|,S)^\Gamma \subseteq F^p_\lambda(\Gamma)\otimes S$, and we have equality.
\end{proof}

\begin{cor}
Every discrete group with the 2-AP of Haagerup-Kraus has the $p$-ITAP for all $p\in(1,\infty)$.
\end{cor}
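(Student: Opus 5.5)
The corollary asks us to show that a discrete group $\Gamma$ with the 2-AP of Haagerup--Kraus has the $p$-ITAP for every $p\in(1,\infty)$. The plan is to combine Remark~\ref{rem:pAP} with Theorem~\ref{thm:ZachP}.

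First, the 2-AP implies exactness of $\Gamma$ by Remark~\ref{rem:pAP}(i). So Theorem~\ref{thm:ZachP} applies to $\Gamma$.

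Next, we pass from the 2-AP to the $p$-AP. By Remark~\ref{rem:pAP}(ii), \cite[Proposition 6.3]{ALR} shows that the $q$-AP implies the $p$-AP whenever $1<p\le q\le 2$ or $2\le q\le p<\infty$. Taking $q=2$ gives the $p$-AP for every $p\in(1,\infty)$.

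Now fix $p\in(1,\infty)$. Since $\Gamma$ is exact and has the $p$-AP, the implication (iv)$\Rightarrow$(iii) of Theorem~\ref{thm:ZachP} gives
\[ B^p_u(|\Gamma|,S)^\Gamma=F^p_\lambda(\Gamma)\otimes S \]
for every $p$-operator space $S\subseteq B(\ell^p)$. We specialize to $S=\mathbb{C}\cdot 1\subseteq B(\ell^p)$, spanned by the identity, or equivalently by a rank-one projection in $K(\ell^p)$. Then $B^p_u(|\Gamma|,S)$ is identified with $B^p_u|\Gamma|$ and $F^p_\lambda(\Gamma)\otimes S$ with $F^p_\lambda(\Gamma)$, and both identifications are compatible with the $\Gamma$-action. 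This yields $B^p_u|\Gamma|^\Gamma=F^p_\lambda(\Gamma)$, which is exactly the $p$-ITAP of Definition~\ref{def:pITAP}.

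The only step needing care is this last identification. For $S=\mathbb{C}e$ with $e$ a rank-one idempotent of norm one, the map $a\mapsto a\otimes e$ is an isometry from $B(\ell^p(\Gamma))$ into $B(\ell^p(\Gamma)\otimes_p\ell^p)$, because $\|T\otimes e\|=\|T\|\|e\|$. This isometry carries finite propagation operators onto $\mathbb{C}[|\Gamma|,S]$ and intertwines the adjoint actions. Hence it maps closures onto closures and fixed points onto fixed points. I do not anticipate any real obstacle beyond this bookkeeping.
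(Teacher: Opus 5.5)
Your proposal is correct and follows the same route as the paper, which likewise combines \cite[Theorem 2.1]{HK} (the 2-AP implies exactness), \cite[Proposition 6.3]{ALR} (the 2-AP implies the $p$-AP) and Theorem~\ref{thm:ZachP}. Your specialization to a one-dimensional $S$ simply makes explicit the step from condition (iii) of Theorem~\ref{thm:ZachP} to the $p$-ITAP, which the paper leaves implicit.
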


\begin{proof}
This follows from Theorem \ref{thm:ZachP}, \cite[Theorem 2.1]{HK}, and \cite[Proposition 6.3]{ALR} (also see Remark \ref{rem:pAP}).
\end{proof}

\begin{cor}
Let $\Gamma$ be an exact group. Then
\begin{enumerate}
\item For $p,q\in(1,\infty)$ with $1/p+1/q=1$, $\Gamma$ has the $p$-operator ITAP if and only if it has the $q$-operator ITAP.
\item If $1<p\leq q\leq 2$ or $2\leq q\leq p<\infty$, and $\Gamma$ has the $q$-operator ITAP, then it has the $p$-operator ITAP.
\end{enumerate}
\end{cor}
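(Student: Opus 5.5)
Both parts reduce to Theorem \ref{thm:ZachP}. Since $\Gamma$ is exact, Theorem \ref{thm:ZachP} holds for every exponent in $(1,\infty)$: for each $r\in(1,\infty)$, $\Gamma$ has the $r$-operator ITAP if and only if $\Gamma$ has the $r$-AP. Neither exactness nor its equivalence with property A depends on $p$, so the theorem can be applied separately at each of the exponents $p$ and $q$. The plan is to convert each operator ITAP statement into the corresponding $p$-AP statement, use the known permanence properties of the $p$-AP recorded in Remark \ref{rem:pAP}, and then convert back.

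For (i), let $1/p+1/q=1$. By Theorem \ref{thm:ZachP} at exponent $p$, the $p$-operator ITAP is equivalent to the $p$-AP. By Remark \ref{rem:pAP}(iii), i.e.\ \cite[Proposition 2.3]{Ver}, the $p$-AP is equivalent to the $q$-AP. By Theorem \ref{thm:ZachP} at exponent $q$, the $q$-AP is equivalent to the $q$-operator ITAP. Chaining these three equivalences gives (i).

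For (ii), assume $1<p\leq q\leq 2$ or $2\leq q\leq p<\infty$, and that $\Gamma$ has the $q$-operator ITAP. Theorem \ref{thm:ZachP} at exponent $q$ gives the $q$-AP. Remark \ref{rem:pAP}(ii), i.e.\ \cite[Proposition 6.3]{ALR}, then gives the $p$-AP. Theorem \ref{thm:ZachP} at exponent $p$ returns the $p$-operator ITAP.

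The only point that needs care is that Theorem \ref{thm:ZachP} really is valid for every exponent in $(1,\infty)$, since its proof uses ingredients stated for a general $p$. These are Theorem \ref{thm:ALRAP}, Theorem \ref{thm:OAPslice} with matrix stability of $K(\ell^p)$, and Theorem \ref{thm:functor}. Each is stated uniformly in $p\in(1,\infty)$, so no real obstacle arises.

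One might try a direct approach for (i) instead: transport the $p$-operator ITAP to the $q$-operator ITAP via the anti-isomorphisms $F^p_\lambda(\Gamma)\cong F^q_\lambda(\Gamma)$ and $B^p_u|\Gamma|^\Gamma\cong B^q_u|\Gamma|^\Gamma$ used after Definition \ref{def:pITAP}. This would additionally require handling coefficient spaces $S\subseteq K(\ell^p)$ versus subspaces of $K(\ell^q)$ under transposition. The route through the $p$-AP avoids that issue, so I will use it.
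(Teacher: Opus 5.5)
Your proposal is correct and matches the paper's proof. For an exact group, apply Theorem \ref{thm:ZachP} at each exponent to convert the operator ITAP into the corresponding $p$-AP. Then use \cite[Proposition 2.3]{Ver} for (i) and \cite[Proposition 6.3]{ALR} for (ii), and convert back.
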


\begin{proof}
(i) follows from Theorem \ref{thm:ZachP} and \cite[Proposition 2.3]{Ver}.
(ii) follows from Theorem \ref{thm:ZachP} and \cite[Proposition 6.3]{ALR}.
\end{proof}

\begin{rem}[Open question] \label{open2}
We may now naturally ask, as Zacharias did in the $p=2$ case in \cite{Zach}, whether one always has $B^p_u|\Gamma|^\Gamma\otimes S = (B^p_u|\Gamma|\otimes S)^\Gamma$ for all $p$-operator spaces $S\subseteq K(\ell^p)$. When $p=2$, this was answered in the negative in \cite[Theorem 1.8 and Corollary 1.9]{KU}.
For other $p$ values, we do not have an answer to this question yet.
We are only able to show that for an exact group $\Gamma$ to have this property, $B^p_u|\Gamma|^\Gamma$ must have the $p$-OAP.

If one can show that $B^p_u|\Gamma|^\Gamma$ having the $p$-OAP implies $\Gamma$ has the $p$-AP (which is known when $p=2$ by \cite[Corollary 2.12]{KU}), then an exact group without the $p$-AP (such as $SL(3,\mathbb{Z})$) will be a counterexample.
By analogy with the approach taken in \cite{KU}, one may have to analyze the inclusion map from $F^p_\lambda(\Gamma)$ to $CV_p(\Gamma)$, and show that this map having the $p$-OAP implies that $\Gamma$ has the $p$-AP.

If one considers the inclusion into $PM_p(\Gamma)$ instead, then the arguments of \cite[Theorem 5.5]{ALR} seem to carry over almost verbatim.
\end{rem}

\begin{prop} (cf. \cite[Theorem 2.8]{KU}) \label{prop:ZachQn}
Let $\Gamma$ be an exact group. If $B^p_u|\Gamma|^\Gamma\otimes S = (B^p_u|\Gamma|\otimes S)^\Gamma$ for all $p$-operator spaces $S\subseteq K(\ell^p)$, then $B^p_u|\Gamma|^\Gamma$ has the slice map property for $K(\ell^p)$.
\end{prop}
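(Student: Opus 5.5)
The plan is to verify the slice map property directly: for every closed subspace $T\subseteq K(\ell^p)$ we show
\[ F(B^p_u|\Gamma|^\Gamma,T,B^p_u|\Gamma|^\Gamma\otimes K(\ell^p)) \subseteq B^p_u|\Gamma|^\Gamma\otimes T. \]
The reverse inclusion is automatic. We then feed the hypothesis in through the invariant-element description of both sides, in the spirit of Proposition \ref{prop:UZ220}.

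First, fix $x\in B^p_u|\Gamma|^\Gamma\otimes K(\ell^p)$ such that $(\phi\otimes id)(x)\in T$ for all $\phi\in (B^p_u|\Gamma|^\Gamma)^*$. By Lemma \ref{lem:tensorIncl}, $x\in B^p_u(|\Gamma|,K(\ell^p))$, so $x$ is a $\Gamma\times\Gamma$ matrix with entries in $K(\ell^p)$. For each pair $s,t$, the functional $\langle e_s,\cdot\, e_t\rangle$ restricted to $B^p_u|\Gamma|^\Gamma$ slices $x$ to $x_{s,t}$. Hence every entry $x_{s,t}$ lies in $T$.

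Since $\Gamma$ is exact, Theorem \ref{thm:functor}(ii) (applied with $S=T\subseteq B(\ell^p)$) gives $x\in B^p_u(|\Gamma|,T)$. The element $x$ is $\Gamma$-invariant, because it lies in the closure of $B^p_u|\Gamma|^\Gamma\odot K(\ell^p)$ and the action acts only on the first leg. Therefore $x\in B^p_u(|\Gamma|,T)^\Gamma$.

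Next, Lemma \ref{lem:Zach} (using exactness again) identifies $B^p_u(|\Gamma|,T)^\Gamma=(B^p_u|\Gamma|\otimes T)^\Gamma$. The hypothesis applied to $S=T$ then gives $(B^p_u|\Gamma|\otimes T)^\Gamma=B^p_u|\Gamma|^\Gamma\otimes T$. Chaining these, $x\in B^p_u|\Gamma|^\Gamma\otimes T$, which is the slice map property. Alternatively, one could first note that the argument above shows the Fubini product equals $B^p_u(|\Gamma|,T)^\Gamma$, and then conclude.

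The main subtlety is that every tensor product must be taken on $L^p$ space, so that spatial and injective tensor products agree. This holds here because $B^p_u|\Gamma|^\Gamma\subseteq B(\ell^p(\Gamma))$ and $T\subseteq K(\ell^p)$. One must also check that slicing with functionals on $B^p_u|\Gamma|^\Gamma$ (rather than on all of $B^p_u|\Gamma|$) still recovers the matrix entries. This works by restriction, since Hahn--Banach is only needed in the other direction.

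The place where property A (exactness) is genuinely used is the step passing from \enquote{entries in $T$} to membership in $B^p_u(|\Gamma|,T)$, together with Lemma \ref{lem:Zach}. The hypothesis is only needed at the final identification.
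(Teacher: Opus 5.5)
Your proof is correct, but it takes a different route from the paper's.

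\textbf{The paper's route.} The paper argues structurally, in three steps.
\begin{enumerate}
\item Exactness gives $p$-nuclearity of $B^p_u|\Gamma|$ (Theorem \ref{Apnuc}), hence the $p$-CCAP and so the $p$-OAP. By Theorem \ref{thm:OAPslice}, the triple $(B^p_u|\Gamma|,S,B^p_u|\Gamma|\otimes K(\ell^p))$ then has the slice map property.
\item The hypothesis together with Proposition \ref{prop:UZ220} gives the slice map property for $(B^p_u|\Gamma|^\Gamma,S,B^p_u|\Gamma|\otimes S)$.
\item Proposition \ref{prop:UZ223} transfers this to $(B^p_u|\Gamma|^\Gamma,S,B^p_u|\Gamma|^\Gamma\otimes K(\ell^p))$.
\end{enumerate}

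\textbf{Your route.} You work concretely with matrix entries. Slicing by the entry functionals puts every entry of $x$ in $T$. Theorem \ref{thm:functor}(ii) then places $x$ in $B^p_u(|\Gamma|,T)$, and invariance places it in $B^p_u(|\Gamma|,T)^\Gamma$. Lemma \ref{lem:Zach} and the hypothesis for $S=T$ finish the argument.

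\textbf{Comparison.} Your argument bypasses $p$-nuclearity and the OAP/slice-map equivalence. It uses property A only through the Schur-multiplier approximations behind Theorem \ref{thm:functor} and Lemma \ref{lem:Zach}. It also makes visible exactly where the hypothesis is used: at the single identification $(B^p_u|\Gamma|\otimes T)^\Gamma=B^p_u|\Gamma|^\Gamma\otimes T$.

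The paper's argument instead isolates a formal principle. If the ambient space has the slice map property and fixed points commute with tensoring, then the fixed-point space has the slice map property. This is the direct analogue of \cite[Theorem 2.8]{KU} and applies to any $\Gamma$-$p$-operator space with the $p$-OAP. Exactness enters only to supply the $p$-OAP of $B^p_u|\Gamma|$.

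\textbf{A minor overstatement.} Your closing alternative claims the argument shows the Fubini product equals $B^p_u(|\Gamma|,T)^\Gamma$. The argument as written gives only the inclusion $\subseteq$. The reverse inclusion needs Lemma \ref{lem:Zach} and the hypothesis to place invariant elements inside $B^p_u|\Gamma|^\Gamma\otimes K(\ell^p)$. This does not affect your main proof.
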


\begin{proof}
Since $\Gamma$ is exact, $B^p_u|\Gamma|$ is $p$-nuclear by Theorem \ref{Apnuc}. Hence it has the $p$-CCAP, and therefore the strong $p$-OAP by Proposition \ref{prop:CBAPOAP}; in particular, it has the $p$-OAP.
Then $(B^p_u|\Gamma|,S,B^p_u|\Gamma|\otimes K(\ell^p))$ has the slice map property for all $p$-operator spaces $S\subseteq K(\ell^p)$ by Theorem \ref{thm:OAPslice}.
Also, since $B^p_u|\Gamma|^\Gamma\otimes S = (B^p_u|\Gamma|\otimes S)^\Gamma$ for all $p$-operator spaces $S\subseteq K(\ell^p)$, it follows from Proposition \ref{prop:UZ220} that $(B^p_u|\Gamma|^\Gamma,S,B^p_u|\Gamma|\otimes S)$ has the slice map property for all $p$-operator spaces $S\subseteq K(\ell^p)$. 
Finally, by Proposition \ref{prop:UZ223}, $(B^p_u|\Gamma|^\Gamma,S,B^p_u|\Gamma|^\Gamma\otimes K(\ell^p))$ has the slice map property.
\end{proof}

Since $K(\ell^p)$ is matrix stable, having the slice map property for $K(\ell^p)$ is equivalent to having the $p$-OAP by Theorem \ref{thm:OAPslice}.

\begin{cor} (cf. \cite[Corollary 2.10]{KU}) \label{cor210KU}
Let $\Gamma$ be an exact group. If $B^p_u|\Gamma|^\Gamma\otimes S = (B^p_u|\Gamma|\otimes S)^\Gamma$ for all $p$-operator spaces $S\subseteq K(\ell^p)$, then $B^p_u|\Gamma|^\Gamma$ has the $p$-OAP.
\end{cor}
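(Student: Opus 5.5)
This is an immediate combination of Proposition \ref{prop:ZachQn} with the converse direction of Theorem \ref{thm:OAPslice}. The argument takes three steps.

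\emph{Step 1.} Under the stated hypothesis ($\Gamma$ exact and $B^p_u|\Gamma|^\Gamma\otimes S=(B^p_u|\Gamma|\otimes S)^\Gamma$ for every $p$-operator space $S\subseteq K(\ell^p)$), Proposition \ref{prop:ZachQn} gives directly that $B^p_u|\Gamma|^\Gamma$ has the slice map property for $K(\ell^p)$. That is, $(B^p_u|\Gamma|^\Gamma,S,B^p_u|\Gamma|^\Gamma\otimes K(\ell^p))$ has the slice map property for all $p$-operator spaces $S\subseteq K(\ell^p)$.

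\emph{Step 2.} Next I invoke the second half of Theorem \ref{thm:OAPslice}, with $A=B^p_u|\Gamma|^\Gamma$ and $B=K(\ell^p)$. Both are $p$-operator spaces on $L^p$ space: $A\subseteq B(\ell^p(\Gamma))$ and $K(\ell^p)\subseteq B(\ell^p)$. As observed right after the definition of matrix stability, $K(\ell^p)$ is matrix stable, via the invertible isometry $\ell^p\cong\ell^p_n\otimes_p\ell^p$, which gives a $p$-complete isometric isomorphism $K(\ell^p)\cong K(\ell^p)\otimes M_n^p$. Theorem \ref{thm:OAPslice} then yields that $A$ has the $p$-operator approximation property for $K(\ell^p)$.

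\emph{Step 3.} By the remark following the definition of the $p$-operator approximation property for $B$, taking $B=K(\ell^p)$ recovers exactly the $p$-OAP defined in Section 2. Here the injective tensor product agrees with the spatial one, since both spaces live on $L^p$ spaces. Hence $B^p_u|\Gamma|^\Gamma$ has the $p$-OAP.

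There is essentially no obstacle. The only point to double-check is that the matrix stability isomorphism in Theorem \ref{thm:OAPslice} is compatible with the spatial tensor product $A\otimes K(\ell^p)\otimes M_n^p$ used in that proof. This holds because $id_A\otimes\pi$ is implemented spatially by the isometry $\ell^p\cong\ell^p_n\otimes_p\ell^p$, so it is a $p$-complete isometry on $A\otimes K(\ell^p)$.
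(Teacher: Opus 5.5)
Your proposal is correct and matches the paper's route: the paper obtains the corollary by combining Proposition \ref{prop:ZachQn} (which gives the slice map property of $B^p_u|\Gamma|^\Gamma$ for $K(\ell^p)$) with the converse half of Theorem \ref{thm:OAPslice}. That converse applies because $K(\ell^p)$ is matrix stable, and the resulting $p$-operator approximation property for $K(\ell^p)$ is exactly the $p$-OAP.
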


\section{$p$-ITAP for product groups}

In this section, we study the $p$-ITAP in the context of products of two discrete groups, generalizing results in the $p=2$ case from \cite{UZ}.

\begin{lem} \cite[Lemma 3.15]{UZ} \label{lem:pdtmet}
Let $G$ and $H$ be discrete groups equipped with proper length functions $l_G$ and $l_H$ respectively.
Then $l_{G\times H}$ given by $l_{G\times H}(g,h)=\max\{ l_G(g),l_H(h) \}$ is a proper length function, and the metric space $|G\times H|$ is precisely $|G|\times|H|$.
\end{lem}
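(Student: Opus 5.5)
We need to show $l_{G\times H}$ is a length function, that it is proper, and that the right-invariant metric it induces is exactly the max metric on $|G|\times|H|$. Throughout, the metric on a group with length function $l$ is the right-invariant one, $d(s,t)=l(st^{-1})$, as in Section 4.

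The length function axioms come straight from those of $l_G$ and $l_H$.

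\emph{Identity and symmetry.} First, $l_{G\times H}(e_G,e_H)=\max\{0,0\}=0$. Next, since $(g,h)^{-1}=(g^{-1},h^{-1})$, symmetry of $l_G$ and $l_H$ gives symmetry of the maximum.

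\emph{Subadditivity.} For $(g_1,h_1),(g_2,h_2)$ we have
\[
l_G(g_1g_2)\le l_G(g_1)+l_G(g_2)\le l_{G\times H}(g_1,h_1)+l_{G\times H}(g_2,h_2),
\]
and the same bound holds for $l_H(h_1h_2)$. Taking the maximum of the two gives the triangle inequality for $l_{G\times H}$.

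\emph{Properness.} Here properness means that balls $\{l\le R\}$ are finite. The set $\{(g,h): l_{G\times H}(g,h)\le R\}$ is exactly $\{g: l_G(g)\le R\}\times\{h: l_H(h)\le R\}$, which is a product of two finite sets and hence finite.

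\emph{Identification of the metric.} The induced right-invariant metric is
\[
d((g_1,h_1),(g_2,h_2))=l_{G\times H}(g_1g_2^{-1},h_1h_2^{-1})=\max\{l_G(g_1g_2^{-1}),l_H(h_1h_2^{-1})\}.
\]
This equals $\max\{d_G(g_1,g_2),d_H(h_1,h_2)\}$, which is precisely the product metric fixed in Section 2.

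The argument is routine throughout. The only point needing care is to use the same convention $d(s,t)=l(st^{-1})$ for $G$, $H$ and $G\times H$, so that the identification is an exact equality of metric spaces and not merely a coarse equivalence.
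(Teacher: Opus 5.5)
Your proof is correct and complete. The paper gives no argument of its own here and simply quotes \cite[Lemma 3.15]{UZ}; your direct check of the length-function axioms, of properness via $\{l_{G\times H}\le R\}=\{l_G\le R\}\times\{l_H\le R\}$, and of the identity $l_{G\times H}\bigl((g_1,h_1)(g_2,h_2)^{-1}\bigr)=\max\{d_G(g_1,g_2),d_H(h_1,h_2)\}$ is the routine verification that the citation stands for. The paper's definition of length function does not require $l(g)>0$ for $g\neq e$; if you want $d$ to be a genuine metric, definiteness also passes to the maximum, since $l_{G\times H}(g,h)=0$ forces $l_G(g)=l_H(h)=0$.
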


\begin{prop} (cf. \cite[Proposition 3.16]{UZ}) \label{prop:factor}
Let $G$ and $H$ be discrete groups. If $G\times H$ has the $p$-ITAP, then so do both $G$ and $H$.
Moreover, the triple $( B^p_u|G|^G,B^p_u|H|^H,B^p_u|G|\otimes B^p_u|H| )$ has the slice map property.
\end{prop}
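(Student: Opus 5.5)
The first assertion follows from Proposition \ref{prop:subgp}: $G\cong G\times\{e\}$ and $H\cong\{e\}\times H$ are subgroups of $G\times H$, and the $p$-ITAP passes to subgroups. By Lemma \ref{lem:pdtmet}, equip $G\times H$ with the max length function, so that $|G\times H|=|G|\times|H|$. The subgroup argument only uses that the metric on each factor is a proper right-invariant metric, so the restrictions are legitimate. Since the $p$-ITAP does not depend on the choice of proper right-invariant metric (the algebra $B^p_u|\Gamma|$ is a coarse invariant for the identity map), no further compatibility is needed.

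For the slice map property I use Proposition \ref{prop:UZ220}. Take $A=B^p_u|G|$ with the $G$-action $\mathrm{Ad}(\rho^G)$ and $B=B^p_u|H|$ with the $H$-action $\mathrm{Ad}(\rho^H)$; both act by $p$-complete isometries and both algebras sit in $B(\ell^p)$, so they are $p$-operator spaces on $L^p$ space. Proposition \ref{prop:UZ220} gives
\[ F(B^p_u|G|^G,B^p_u|H|^H,B^p_u|G|\otimes B^p_u|H|)=(B^p_u|G|\otimes B^p_u|H|)^{G\times H}. \]
It therefore suffices to show that the right-hand side equals $B^p_u|G|^G\otimes B^p_u|H|^H$. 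The inclusion $\supseteq$ is clear, so the work is in $\subseteq$.

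For $\subseteq$, take $x\in(B^p_u|G|\otimes B^p_u|H|)^{G\times H}$. Proposition \ref{thm:incl}, with $S=\mathbb{C}$, gives $B^p_u|G|\otimes B^p_u|H|\subseteq B^p_u(|G|\times|H|)=B^p_u|G\times H|$. Under $\ell^p(G\times H)\cong\ell^p(G)\otimes_p\ell^p(H)$ we have $\rho^{G\times H}_{(g,h)}=\rho^G_g\otimes\rho^H_h$, so the $G\times H$-action on the tensor product is the restriction of $\mathrm{Ad}(\rho^{G\times H})$. Hence $x\in B^p_u|G\times H|^{G\times H}$, and the $p$-ITAP of $G\times H$ gives $x\in F^p_\lambda(G\times H)$. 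Since $\lambda_{(g,h)}=\lambda_g\otimes\lambda_h$, the algebra $F^p_\lambda(G\times H)$ is the closure of $\mathbb{C}G\odot\mathbb{C}H$, which is $F^p_\lambda(G)\otimes F^p_\lambda(H)$ inside $B(\ell^p(G)\otimes_p\ell^p(H))$. Finally $F^p_\lambda(G)\subseteq B^p_u|G|^G$ and $F^p_\lambda(H)\subseteq B^p_u|H|^H$, so $x\in B^p_u|G|^G\otimes B^p_u|H|^H$, and the slice map property follows.

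The main obstacle is the bookkeeping that identifies the spatial tensor products and the actions under $\ell^p(G\times H)\cong\ell^p(G)\otimes_p\ell^p(H)$. One must check that the norm closures agree, so that $F^p_\lambda(G\times H)=F^p_\lambda(G)\otimes F^p_\lambda(H)$ holds isometrically, and that the action appearing in Proposition \ref{prop:UZ220} coincides with $\mathrm{Ad}(\rho^{G\times H})$. This is where the proper choice of the max metric from Lemma \ref{lem:pdtmet} is used. Once these identifications hold, the argument is formal.
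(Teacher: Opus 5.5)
Your proposal is correct and follows essentially the same route as the paper. The first part is subgroup inheritance (Proposition~\ref{prop:subgp}). For the slice map property, you combine Proposition~\ref{prop:UZ220} with the equivariant inclusion $B^p_u|G|\otimes B^p_u|H|\subseteq B^p_u|G\times H|$, the $p$-ITAP of $G\times H$, and $F^p_\lambda(G\times H)=F^p_\lambda(G)\otimes F^p_\lambda(H)$. The only cosmetic difference is at the last step: you use the trivial inclusions $F^p_\lambda(G)\subseteq B^p_u|G|^G$ and $F^p_\lambda(H)\subseteq B^p_u|H|^H$ rather than the equalities from the first part. This shows the slice map statement needs only the $p$-ITAP of $G\times H$ itself.
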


\begin{proof}
The first statement follows from Proposition~\ref{prop:subgp}.

Put
\[
A=B_u^p|G|,
\qquad
B=B_u^p|H|.
\]
By the first statement and the definition of the $p$-ITAP,
\[
A^G=F_\lambda^p(G),
\qquad
B^H=F_\lambda^p(H).
\]
We claim that
\[
(A\otimes B)^{G\times H}=A^G\otimes B^H.
\]
The inclusion
\[
A^G\otimes B^H\subseteq(A\otimes B)^{G\times H}
\]
is immediate.

For the reverse inclusion, Proposition~\ref{thm:incl} and Lemma~\ref{lem:pdtmet} give the
natural equivariant inclusion
\[
A\otimes B
\subseteq
B_u^p(|G|\times|H|)
=
B_u^p|G\times H|.
\]
Consequently,
\[
(A\otimes B)^{G\times H}
\subseteq
\bigl(B_u^p|G\times H|\bigr)^{G\times H}.
\]
Since $G\times H$ has the $p$-ITAP,
\[
\bigl(B_u^p|G\times H|\bigr)^{G\times H}
=
F_\lambda^p(G\times H).
\]
Under the canonical identification
\[
\ell^p(G\times H)
\cong
\ell^p(G)\otimes_p\ell^p(H),
\]
we have
\[
F_\lambda^p(G\times H)
=
F_\lambda^p(G)\otimes F_\lambda^p(H)
=
A^G\otimes B^H.
\]
Thus
\[
(A\otimes B)^{G\times H}=A^G\otimes B^H.
\]

On the other hand, Proposition~\ref{prop:UZ220} gives
\[
F(A^G,B^H,A\otimes B)
=
(A\otimes B)^{G\times H}.
\]
Combining the last two equalities yields
\[
F(A^G,B^H,A\otimes B)
=
A^G\otimes B^H.
\]
Hence the triple
\[
\bigl(B_u^p|G|^G,\,
      B_u^p|H|^H,\,
      B_u^p|G|\otimes B_u^p|H|\bigr)
\]
has the slice map property.
\end{proof}

\begin{thm} (cf. \cite[Theorem 3.17]{UZ})
Let $G$ be a discrete group with the $p$-AP, and let $S\subseteq B(\ell^p)$ be a $p$-operator space equipped with an action of a discrete group $H$ by $p$-completely bounded maps. Then we have
\[ F^p_\lambda(G)\otimes S^H = B^p_u(|G|,S)^{G\times H}. \]
\end{thm}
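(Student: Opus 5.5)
The plan is to prove the two inclusions separately, with the $p$-AP of $G$ used only for the inclusion $\supseteq$. Here $G$ acts on $B^p_u(|G|,S)$ by $\mathrm{Ad}(\rho_g)$ on the matrix indices. The group $H$ acts entrywise, via $\widehat{h}$ obtained from the $p$-completely bounded map $h:S\to S$ by the functoriality discussed after the definition of $B^p_u(X,S)$. These two actions commute.

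\textbf{Easy inclusion $\subseteq$.} We have $F^p_\lambda(G)\otimes S^H\subseteq B^p_u|G|\otimes S\subseteq B^p_u(|G|,S)$ by Lemma \ref{lem:tensorIncl}. On elementary tensors $\lambda_r\otimes s$ with $s\in S^H$, both actions are trivial: $\mathrm{Ad}(\rho_g)$ fixes $\lambda_r$, and $\widehat{h}(\lambda_r\otimes s)=\lambda_r\otimes h(s)=\lambda_r\otimes s$. By linearity and continuity, all of $F^p_\lambda(G)\otimes S^H$ is fixed by $G\times H$.

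\textbf{Main inclusion $\supseteq$.} Let $x\in B^p_u(|G|,S)^{G\times H}$.

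1. Since $G$ has the $p$-AP, the discussion after Lemma \ref{lem:Qpcb} gives a net $u_\alpha$ of finitely supported functions in $A_p(G)$ such that $\hat m_{u_\alpha}(y)\to y$ in norm for every $y\in B^p_u(|G|,S)$. This discussion is stated for closed subspaces $S\subseteq B(\ell^p)$, which is exactly our setting.

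2. Since $x$ is $G$-invariant, $x_{s,t}=x_{st^{-1},e}$. Therefore
\[ \hat m_{u_\alpha}(x)=\sum_{r\in\operatorname{supp}u_\alpha}u_\alpha(r)\,\lambda_r\otimes x_{r,e}, \]
exactly as in the proof of Lemma \ref{lem:Zach}. This lies in $\mathbb{C}G\odot S$.

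3. The entries $x_{r,e}$ lie in $S^H$, because $\widehat{h}(x)=x$ means $h(x_{s,t})=x_{s,t}$ for all $s,t$. Hence $\hat m_{u_\alpha}(x)\in\mathbb{C}G\odot S^H\subseteq F^p_\lambda(G)\otimes S^H$.

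4. Letting $\alpha\to\infty$, and using that $F^p_\lambda(G)\otimes S^H$ is closed, we get $x\in F^p_\lambda(G)\otimes S^H$.

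\textbf{Where the work lies.} The main point needing care is step 1. We must make sure the point-norm convergence $\hat m_{u_\alpha}\to\mathrm{id}$ on $B^p_u(|G|,S)$ really follows from weak$^*$ convergence $u_\alpha\to1$. This requires Lemma \ref{lem:Qpcb} together with the Hahn--Banach passage from point-weak to point-norm closure of the convex set $\{\hat m_u\}$, and it is this passage that replaces the original net by convex combinations. These combinations remain finitely supported elements of $A_p(G)$, so step 2 still applies.

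A secondary point is checking that $\hat m_u$ is defined on $B^p_u(|G|,S)$ with $\|\hat m_u\|\le\|u\|_{M_{cb}A_p(G)}$. This was recorded in the preliminaries, so I will cite it directly.
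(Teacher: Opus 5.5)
Your proof is correct, but it takes a more direct route than the paper's.

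The paper treats the two actions separately. It first uses the multiplier argument from (iv)$\Rightarrow$(iii) of Theorem~\ref{thm:ZachP} to obtain $B^p_u(|G|,S)^G=F^p_\lambda(G)\otimes S$. It then handles $H$ on its own: Proposition~\ref{prop:UZ220} identifies $(F^p_\lambda(G)\otimes S)^H$ with the Fubini product $F(F^p_\lambda(G),S^H,F^p_\lambda(G)\otimes S)$. The paper then uses that $F^p_\lambda(G)$ has the strong $p$-OAP \cite[Theorem 5.5]{ALR}, hence the slice map property for $S$ (Theorem~\ref{thm:OAPslice}).

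You instead observe that the approximants $\hat m_{u_\alpha}(x)=\sum_r u_\alpha(r)\lambda_r\otimes x_{r,e}$ have coefficients that are matrix entries of $x$. These entries are already $H$-fixed, so the approximants lie in $\mathbb{C}G\odot S^H$ from the start. This removes the need for the Fubini/slice-map step and for the appeal to \cite[Theorem 5.5]{ALR}.

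Your argument is shorter and shows slightly more. Any $G$-invariant $x\in B^p_u(|G|,S)$ whose entries lie in a closed subspace $T\subseteq S$ belongs to $F^p_\lambda(G)\otimes T$; in effect, this proves the relevant slice map property of $F^p_\lambda(G)$ by hand. The paper's route instead isolates the general principle that $(A\otimes S)^H=A\otimes S^H$ whenever $A$ has the $p$-operator approximation property for $S$. It also keeps the proof within the framework of Section 5, in parallel with \cite[Theorem 3.17]{UZ}.
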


\begin{proof}
First, we have inclusions
\[ F^p_\lambda(G)\otimes S^H \subseteq B^p_u|G|^G\otimes S^H \subseteq (B^p_u|G|\otimes S)^{G\times H} \subseteq B^p_u(|G|,S)^{G\times H}. \]
On the other hand, since $G$ has the $p$-AP, thus $B^p_u(|G|,S)^G=F^p_\lambda(G)\otimes S$ by the proof of (iv) $\Rightarrow$ (iii) in Theorem \ref{thm:ZachP}, which does not use exactness of $G$. 
Moreover, $F^p_\lambda(G)$ has the (strong) $p$-OAP \cite[Theorem 5.5]{ALR}. Hence it has the $p$-operator
approximation property for $S$, and therefore by Proposition \ref{prop:UZ220} and Theorem \ref{thm:OAPslice}, we have
\[ (F^p_\lambda(G)\otimes S)^H = F(F^p_\lambda(G),S^H,F^p_\lambda\otimes S) = F^p_\lambda(G)\otimes S^H. \]
It follows that
\[ 
B^p_u(|G|,S)^{G\times H} = (B^p_u(|G|,S)^G)^H \subseteq (B^p_u|G|\otimes S)^H = F^p_\lambda(G)\otimes S^H. \]
\end{proof}

\begin{thm} (cf. \cite[Theorem 3.18]{UZ}) \label{thm:pdt}
Let $G$ and $H$ be discrete groups. If $G$ has the $p$-AP, and $H$ has the $p$-ITAP, then $G\times H$ has the $p$-ITAP.
\end{thm}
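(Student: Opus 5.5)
We plan to deduce the result from the preceding theorem, applied with $S=B^p_u|H|\subseteq B(\ell^p(H))\cong B(\ell^p)$. The group $H$ acts on $S$ by $\mathrm{Ad}(\rho)$, which is a $p$-complete isometry, so $S^H=B^p_u|H|^H=F^p_\lambda(H)$ because $H$ has the $p$-ITAP. (If $H$ is finite, $\ell^p(H)$ is finite-dimensional; we then embed it as a complemented subspace of $\ell^p$, or note that the theorem's proof works verbatim for $S\subseteq B(E)$ with $E$ any $L^p$ space.) The preceding theorem then gives
\[ F^p_\lambda(G)\otimes F^p_\lambda(H) = B^p_u(|G|,B^p_u|H|)^{G\times H}. \]
Under $\ell^p(G\times H)\cong\ell^p(G)\otimes_p\ell^p(H)$, the left side is $F^p_\lambda(G\times H)$, as already noted in the proof of Proposition \ref{prop:factor}.

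Next we compare $B^p_u|G\times H|^{G\times H}$ with the right side. By Lemma \ref{lem:pdtmet} we have $|G\times H|=|G|\times|H|$. Proposition \ref{thm:incl} with $Y$ a one-point space, i.e. taking $S=\mathbb{C}$ with $Y=|H|$, gives
\[ B^p_u(|G|\times|H|)\subseteq B^p_u(|G|,B^p_u|H|). \]
This inclusion is the identity on operators on $\ell^p(G\times H)$, so it is equivariant: the conjugation action of $\rho_{(g,h)}=\rho_g\otimes\rho_h$ corresponds to $\mathrm{Ad}(\rho_g)$ on the $|G|$-matrix structure combined with $\mathrm{Ad}(\rho_h)$ applied entrywise. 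Concretely, for $x\in B^p_u|G\times H|$ we have $x_{(s,a),(t,b)}$ and the block $x_{s,t}\in B^p_u|H|$, and we check that $(\mathrm{Ad}\rho_{(g,h)}x)_{s,t}=\mathrm{Ad}(\rho_h)(x_{sg,tg})$. Taking fixed points yields
\[ B^p_u|G\times H|^{G\times H}\subseteq B^p_u(|G|,B^p_u|H|)^{G\times H}=F^p_\lambda(G)\otimes F^p_\lambda(H)=F^p_\lambda(G\times H). \]
The reverse inclusion $F^p_\lambda(G\times H)\subseteq B^p_u|G\times H|^{G\times H}$ always holds, so $G\times H$ has the $p$-ITAP.

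The main obstacle is making sure the identifications are consistent. We must confirm that the action on $B^p_u(|G|,S)$ used in the preceding theorem (conjugation by $\rho_g\otimes 1$ together with the entrywise $H$-action) matches conjugation by $\rho_{(g,h)}$, and that the spatial tensor product $F^p_\lambda(G)\otimes F^p_\lambda(H)$ computed inside $B(\ell^p(G)\otimes_p\ell^p(H))$ is exactly the closure of $\mathbb{C}[G\times H]$. The latter holds because $\lambda_{(g,h)}=\lambda_g\otimes\lambda_h$, and elementary tensors of group ring elements span the group ring of $G\times H$. A minor technical point is that $B(\ell^p(H))$ is identified with $B(\ell^p)$ only when $H$ is infinite; the finite case is handled as described above, or directly, since a finite $H$ is amenable.
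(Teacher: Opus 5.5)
Your proposal is correct and follows the paper's proof. You apply the preceding theorem with $S=B^p_u|H|$ under the $\mathrm{Ad}(\rho)$-action, so that $S^H=F^p_\lambda(H)$, and use the equivariant inclusion $B^p_u|G\times H|\subseteq B^p_u(|G|,B^p_u|H|)$ from Proposition~\ref{thm:incl} (with $X=|G|$, $Y=|H|$, $S=\mathbb{C}$; your phrase ``$Y$ a one-point space'' is a slip) to sandwich $B^p_u|G\times H|^{G\times H}$ between two copies of $F^p_\lambda(G)\otimes F^p_\lambda(H)=F^p_\lambda(G\times H)$, with your explicit equivariance check and treatment of finite $H$ supplying details the paper leaves implicit.
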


\begin{proof}
By Proposition \ref{thm:incl}, we have \[ B^p_u|G\times H| \subseteq B^p_u(|G|,B^p_u|H|). \]
Thus, by the previous theorem, we have \[ B^p_u|G\times H|^{G\times H} \subseteq (B^p_u(|G|,B^p_u|H|))^{G\times H} = F^p_\lambda(G) \otimes B^p_u|H|^H. \]
Since $H$ has the $p$-ITAP, we have $B^p_u|H|^H=F^p_\lambda(H)$. Thus,
\begin{align*}
F^p_\lambda(G) \otimes F^p_\lambda(H) &= F^p_\lambda(G\times H) \\
&\subseteq B^p_u|G\times H|^{G\times H} \\ &\subseteq F^p_\lambda(G) \otimes F^p_\lambda(H),
\end{align*}
so $G\times H$ has the $p$-ITAP.
\end{proof}

Combining Proposition \ref{prop:factor} with Theorem \ref{thm:pdt}, we have the following consequence.

\begin{cor} (cf. \cite[Corollary 3.19]{UZ})
Let $G$ and $H$ be discrete groups, and suppose $G$ has the $p$-AP.
Then $G\times H$ has the $p$-ITAP if and only if $H$ has the $p$-ITAP.
\end{cor}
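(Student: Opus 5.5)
The corollary is the conjunction of two implications, each of which is already available, so the plan is to derive one from each earlier result.

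For the forward implication, assume $G\times H$ has the $p$-ITAP. Proposition \ref{prop:factor} then says directly that both factors have the $p$-ITAP, and in particular $H$ does. (Alternatively, $H\cong\{e\}\times H$ is a subgroup of $G\times H$, so Proposition \ref{prop:subgp} applies.) This direction does not use the $p$-AP of $G$.

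For the reverse implication, assume $H$ has the $p$-ITAP. Since $G$ has the $p$-AP by hypothesis, Theorem \ref{thm:pdt} applies and gives the $p$-ITAP for $G\times H$.

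The only step needing care is that the metric structures agree. In both Proposition \ref{prop:factor} and Theorem \ref{thm:pdt}, $|G\times H|$ is given the max metric coming from proper length functions on $G$ and $H$, as in Lemma \ref{lem:pdtmet}. I would remark that the $p$-ITAP does not depend on the choice of proper right-invariant metric, because $B^p_u|\Gamma|$ is independent of this choice: any two such metrics are coarsely equivalent via the identity map, so they define the same finite-propagation operators. With that observation, the two implications combine to give the stated equivalence. I do not expect a genuine obstacle beyond this bookkeeping.
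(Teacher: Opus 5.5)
Your proposal is correct and matches the paper, which also obtains the corollary by combining Proposition \ref{prop:factor} for the forward direction (itself resting on Proposition \ref{prop:subgp}) with Theorem \ref{thm:pdt} for the reverse direction. Your remark that the $p$-ITAP does not depend on the choice of proper right-invariant metric is accurate; the paper leaves this bookkeeping implicit.
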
 

\bibliographystyle{plain}
\bibliography{mybib}
\end{document}